\numberwithin{equation}{section}
\newcommand{\nc}{\newcommand}
\def\node#1#2{\overset{#1}{\underset{#2}{\circ}}}
\newcommand{\SSSS}{S}
\newcommand{\HA}{\widehat{X}}
\newcommand{\defequiv}{\stackrel{\textrm{def}}{\Longleftrightarrow}}
\newcommand{\kl}{Khovanov-Lauda}
\newcommand{\km}{Kac-Moody}
\newcommand{\bn}{Brauer-Nesbitt}
\newcommand{\bk}{Brundan-Kleshchev}
\newcommand{\ih}{Iwahori-Hecke}
\newcommand{\kor}{K\"ulshammer-Olsson-Robinson}
\newcommand{\asy}{Ando-Suzuki-Yamada}
\newcommand{\MEE}{\mathfrak{m}}
\newcommand{\ul}{\underline}
\newcommand{\RHO}{\rho}
\newcommand{\KA}{\mathcal{K}}
\newcommand{\MO}{\mathscr{O}}
\newcommand{\POW}{\mathsf{Pow}}
\newcommand{\TOR}{\mathsf{Tor}}
\newcommand{\TRIV}{\mathsf{triv}}
\DeclareMathOperator{\ANN}{\mathsf{Ann}}
\DeclareMathOperator{\SPEC}{\mathsf{Spec}}
\DeclareMathOperator{\MSPEC}{\mathsf{max-Spec}}
\DeclareMathOperator{\FITT}{\mathsf{Fitt}}
\DeclareMathOperator{\SECTION}{\mathsf{Sec}}
\newcommand{\CATISO}{\mathsf{ch}}
\newcommand{\HAT}[2]{S^{#2}(#1)}
\newcommand{\OVERy}[2]{\overline{#1}^{\, #2}}
\newcommand{\HATT}[2]{{#1}_{#2}}
\newcommand{\HATTT}[3]{{#1}_{#2}^{#3}}
\newcommand{\CRBIJ}{\varphi}
\newcommand{\KEYBIJECTION}[1]{\beta_{#1}}
\newcommand{\PRIMES}{\mathsf{Prm}}
\newcommand{\NNN}{\Z_{\geq1}}
\newcommand{\IRR}{\mathsf{Irr}}
\newcommand{\PAR}{\mathsf{Par}}
\newcommand{\RPAR}{\mathsf{RP}}
\newcommand{\CPAR}{\mathsf{CRP}}
\newcommand{\C}{{\mathbb C}}
\newcommand{\BLOCK}{\mathsf{Bl}}
\newcommand{\BAR}{\mathsf{bar}}
\newcommand{\BARR}{{\sigma}}
\newcommand{\SIM}{\sim}
\newcommand{\QSH}{\mathsf{QSh}}
\newcommand{\RSH}{\mathsf{RSh}}
\newcommand{\QSHM}{\mathsf{QSh}^\mathsf{M}}
\newcommand{\RSHM}{\mathsf{RSh}^\mathsf{M}}
\newcommand{\Q}{\mathbb {Q}}
\newcommand{\Z}{{\mathbb Z}}
\newcommand{\MP}{{\mathcal{P}}}
\newcommand{\MPC}{{\mathcal{P}^\vee}}
\newcommand{\MA}{\mathscr{A}}
\newcommand{\MAp}[1]{\Z_{(#1)}[v,v^{-1}]}
\newcommand{\MAA}{{R}}
\newcommand{\corr}{\mathbb{F}}
\newcommand{\TRANS}[1]{{}^{\mathrm{tr}}{#1}}
\newcommand{\corrr}[1]{k_{#1}}
\newcommand{\qq}[1]{\eta_{#1}}
\theoremstyle{plain}
\newtheorem{lemma}{Lemma}[section]
\newtheorem{prop}[lemma]{Proposition}
\newtheorem{theorem}[lemma]{Theorem}
\newcommand{\Prop}{\begin{prop}}
\newcommand{\enprop}{\end{prop}}
\newcommand{\Lemma}{\begin{lemma}}
\newcommand{\enlemma}{\end{lemma}}
\newcommand{\Th}{\begin{theorem}}
\newcommand{\enth}{\end{theorem}}
\newtheorem{corollary}[lemma]{Corollary}
\newcommand{\Cor}{\begin{corollary}}
\newcommand{\encor}{\end{corollary}}
\newtheorem{definition}[lemma]{Definition}
\newtheorem{question}[lemma]{Question}
\newcommand{\Question}{\begin{question}}
\newcommand{\enquestion}{\end{question}}
\newcommand{\Def}{\begin{definition}}
\newcommand{\edf}{\end{definition}}
\theoremstyle{definition}
\newtheorem{remark}[lemma]{Remark}
\newtheorem{conjecture}[lemma]{Conjecture}
\nc{\Rem}{\begin{remark}}
\nc{\enrem}{\end{remark}}
\newcommand{\Con}{\begin{conjecture}}
\newcommand{\encon}{\end{conjecture}}
\nc{\emprule}[1]{\rule{#1}{0pt}}
\newcommand{\g}{{\mathfrak{g}}}
\newcommand{\Hom}{\operatorname{Hom}}
\newcommand{\Rank}{\operatorname{rank}}
\newcommand{\isoto}[1][]{\mathop{\xrightarrow[#1]{\rule{0pt}{.9ex}{\raisebox{-.6ex}[0ex][-.7ex]{$\mspace{4mu}\sim\mspace{3mu}$}}}}}
\renewcommand{\hom}{\operatorname{\it \mathscr{H}\kern-.25em om}}
\newcommand{\NPI}[1]{(N^{(p)}_{#1})^{-1}}
\newcommand{\M}{{\mathscr M}}
\newcommand{\N}{{\mathbb{N}}}
\newcommand{\eq}{\begin{eqnarray}}
\newcommand{\eneq}{\end{eqnarray}}
\newcommand{\GPPP}[1]{\mathsf{Proj}_\mathsf{gr}({#1})}
\newcommand{\eqn}{\begin{eqnarray*}}
\newcommand{\eneqn}{\end{eqnarray*}}
\newenvironment{tenumerate}{
  \begin{enumerate}
  
  }{\end{enumerate}}
\nc{\bnum}{\begin{enumerate}[{\rm (i)}]}
\nc{\enum}{\ee}
\nc{\bna}{\begin{enumerate}[{\rm (a)}]}
\nc{\bnA}{\begin{enumerate}[{\rm (A)}]}
\nc{\bnX}{\begin{enumerate}[{\rm (X)}]}
\nc{\ena}{\ee}
\newcommand{\on}{\operatorname}
\newcommand{\bni}{\begin{tenumerate}}
\newcommand{\eni}{\end{tenumerate}}
\newcommand{\QED}{\end{proof}}
\newcommand{\Proof}{\begin{proof}}
\newcommand{\CONG}[1]{\equiv_{#1}}
\newcommand{\PCONG}[1]{\equiv'_{#1}}
\newcommand{\GCONG}{\fallingdotseq}
\newcommand{\FCONG}[1]{\equiv^F_{#1}}
\newcommand{\NCONG}[1]{\not\equiv_{#1}}
\newcommand{\ba}{\begin{array}}
\newcommand{\ea}{\end{array}}
\newcommand{\hs}{\hspace*}
\newcommand{\eqsub}{\begin{subequations}\begin{eqnarray}}
\newcommand{\eneqsub}{\end{eqnarray}\end{subequations}}
\newcommand{\ol}{\overline}
\newcommand{\A}{\mathscr{A}}
\newcommand{\IND}{{{\on{Ind}}}}
\nc{\la}{\lambda}
\nc{\lam}{\lambda}
\nc{\U}[1][\g]{U_q(#1)}
\nc{\te}{\tilde{e}}
\nc{\tei}{\tilde{e}_i}
\nc{\tf}{\tilde{f}}
\nc{\tfi}{\tilde{f}_i}
\nc{\tU}{\widetilde U_q(\g)}
\nc{\tE}{\tilde{E}}
\nc{\tF}{\tilde{F}}
\nc{\BZ}{{\mathbb{Z}}}
\nc{\al}{\alpha}
\nc{\qs}{{q}}
\nc{\lan}{\langle}
\nc{\ran}{\rangle}
\nc{\re}{{\mathrm{re}}}
\nc{\wt}{\operatorname{wt}}
\nc{\Uf}[1][\g]{U^-_q(#1)}
\nc{\Ue}{U^+_q(\g)}
\nc{\eps}{\varepsilon}
\nc{\vphi}{\varphi}
\nc{\sphi}{\varphi^*}
\nc{\seps}{\varepsilon^*}
\nc{\nn}{\nonumber}
\def\max{{\mathop{\mathrm{max}}}}
\nc{\vp}{\varpi}
\nc{\cls}{{\operatorname{cl}}}
\nc{\Wt}{{\operatorname{Wt}}}
\nc{\Us}{U'_q(\g)}
\nc{\La}{\Lambda}
\nc{\ro}{{\rm(}}
\nc{\rf}{{\rm)}}
\nc{\norm}{{\mathrm{norm}}}
\nc{\qbox}{\quad\mbox}
\nc{\braid}{{\mathfrak{B}}}
\nc{\Ad}{\operatorname{Ad}}
\nc{\Aut}{\operatorname{Aut}}
\nc{\dt}[1]{\tilde{\tilde #1}}
\nc{\Sn}{S^{{\mathrm{norm}}}}
\nc{\aff}{{\mathrm{aff}}}
\nc{\rk}{{\mathrm{rk}}}
\nc{\tQ}{\widetilde{Q}}
\nc{\tP}{\widetilde{P}}
\nc{\tW}{\widetilde{\mathscr{W}}}
\nc{\Dyn}{\mathrm{Dyn}}
\nc{\tD}{\widetilde{\Delta}}
\nc{\height}{{\operatorname{ht}}}
\nc{\bl}{\bigl}
\nc{\br}{\bigr}
\nc{\Hecke}{\mathrm{H}}
\nc{\HB}{\Hecke^{\mathrm{B}}}
\nc{\K}{\mathrm{K}}
\newcommand{\scbul}{{\,\raise1pt\hbox{$\scriptscriptstyle\bullet$}\,}}
\nc{\vac}{{\phi}}
\nc{\be}{\begin{enumerate}}
\nc{\ee}{\end{enumerate}}
\nc{\low}{{\mathrm{low}}}
\nc{\upper}{{\mathrm{up}}}
\nc{\Zodd}{\Z_{\mathrm{odd}}}
\nc{\Ft}[1][n]{\mathbb{P}\mathrm{ol}_{#1}}
\nc{\Ftf}[1][n]{\widetilde{\mathbb{P}\mathrm{ol}}_{#1}}
\nc{\KB}{\on{K}^{\mathrm{B}}}
\nc{\Res}{\mathrm{Res}}
\nc{\Fc}[1][{n,m}]{\mathbf{F}_{#1}}
\nc{\tphi}{\widetilde{\varphi}}
\nc{\CO}{\mathscr{O}}
\nc{\CK}{\mathscr{K}}
\nc{\disc}{\mathfrak{d}}
\nc{\tr}{\on{tr}}
\nc{\Gb}{\mathfrak{b}}
\nc{\Gh}{\mathfrak{h}}
\nc{\ga}{\mathfrak{a}}
\nc{\stable}{\mathrm{stable}}
\nc{\X}{\mathfrak{X}}
\nc{\Hilb}{\mathrm{Hilb}}
\nc{\W}{\ensuremath{\mathscr{W}}}
\nc{\Ws}{\ensuremath{\rm W}}
\nc{\opp}{{\on{opp}}}
\nc{\corps}{{\mathbf{k}}}
\nc{\cor}{{\mathbf{k}}}
\nc{\h}{\mathrm{\hslash}}
\nc{\fL}[1][{\h}]{\C(\mspace{-1mu}(#1)\mspace{-1mu})}
\nc{\ad}{\mathrm{ad}}
\newcommand{\Endm}{\operatorname{\mathscr{E}\kern-.1pc\mathit{nd}}}
\newcommand{\Endomo}{\operatorname{\mathscr{E}\kern-.1pc\mathit{nd}}}
\nc{\bc}{\bar{\corps}}
\nc{\reg}{{\mathrm{reg}}}
\nc{\ysq}{\mathbf{y}^2}
\nc{\CH}{\mathsf{char}}
\nc{\sketch}{\Proof}
\nc{\Gm}{\mathbb{G}_{\mathrm{m}}}
\nc{\hGm}{\hat{\mathbb{G}}_{\mathrm{m}}}
\nc{\ug}{\widehat{\mathrm{G}}_{\mathrm{m}}}
\nc{\tL}{\widetilde{\mathscr{L}}}
\nc{\Fr}{\mathcal{F}}
\nc{\E}{\mathcal{E}}
\nc{\ord}{\on{ord}}
\nc{\bM}{\overset{\hs{1.5ex}\rule[-.08ex]{1.8ex}{.08ex}}{\M}}
\nc{\romano}{\mathrm{o}}
\nc{\into}{\hookrightarrow}
\nc{\good}{\mathrm{good}}
\nc{\tA}{\widetilde\A}
\nc{\Vz}{{V}\kern-1.1ex\raisebox{1.5ex}[0ex][0ex]{$\cdot$}}
\nc{\bxes}[1]{\raisebox{.9ex}{$\cdot$}{\kern#1}\raisebox{0ex}{$\cdot$}
{\kern#1}\raisebox{-.9ex}{$\cdot$}}
\nc{\ssum}{\mathop{\mbox{\normalsize{${\sum}$}}}\limits}
\nc{\ct}{{\mbox{\tiny$\mathrm{CT}$}}}
\nc{\pr}{\mathrm{pr}}
\nc{\qr}{\mathrm{rp}}
\newcommand{\p}{{\{p\}}}
\DeclareMathOperator{\MAT}{\mathsf{Mat}}
\DeclareMathOperator{\KER}{\mathsf{Ker}}
\DeclareMathOperator{\COKER}{\mathsf{Coker}}
\nc{\COKERR}[1]{\mathsf{Cok}_{#1}}
\DeclareMathOperator{\SYM}{\mathsf{Sym}}
\DeclareMathOperator{\MULT}{\mathsf{Mult}}
\DeclareMathOperator{\GL}{\mathsf{GL}}
\DeclareMathOperator{\AUT}{\mathsf{Aut}}
\DeclareMathOperator{\PRS}{\pi}
\DeclareMathOperator{\GCD}{}
\DeclareMathOperator{\PROC}{\mathsf{PC}}
\DeclareMathOperator{\DIAG}{\mathsf{diag}}
\DeclareMathOperator{\RED}{\mathsf{red}}
\DeclareMathOperator{\CUT}{\mathsf{cut}}
\DeclareMathOperator{\INFL}{\mathsf{Infl}}
\def\GL{\operatorname{GL}\nolimits}
\nc{\Fs}{\ensuremath{\rm F}}
\nc{\isotf}{\overset{
{\rule{0pt}{.9ex}%
{\raisebox{-.6ex}[0ex][-.7ex]{$\mspace{3mu}\sim\mspace{3mu}$}}}}
{\longleftrightarrow}}
\nc{\tN}{\tilde\N}
\nc{\tens}{\mathop\otimes\limits}
\nc{\super}{\mathrm{super}}
\nc{\Mods}{\on{Mod}_\super}
\nc{\rev}{\mathrm{rev}}
\nc{\clif}{\mathfrak{C}}
\nc{\clifm}{\clif^{-}}
\nc{\Fct}{\mathrm{Fct}}
\nc{\Fcts}{\mathrm{Fct}_\super}
\nc{\Ks}{\on{K^\super}}
\nc{\ts}{\widetilde{s}}
\nc{\KUGIRI}{\circ}
\nc{\Sym}{\mathfrak{S}}
\nc{\FF}{\mathcal{F}}
\nc{\BF}{\mathcal{B}}
\nc{\BFF}{\widetilde{\mathcal{B}}}
\nc{\cc}{\mathfrak{c}}
\nc{\SK}{\mathcal{KS}}
\nc{\noi}{\noindent}
\nc{\odd}{{\mathrm{odd}}}
\nc{\even}{{\mathrm{even}}}
\nc{\bs}{\ol{s}}
\nc{\Khc}[1][n]{\ol{\mathcal{KHC}}_{#1}}
\nc{\Ohc}[1][n]{\ol{\mathcal{OHC}}_{#1}}
\nc{\KHC}[1][n]{\mathcal{K}{\mathcal{HC}}_{#1}}
\nc{\OHC}[1][n]{\mathcal{O}{\mathcal{HC}}_{#1}}
\nc{\IODD}{I_{\odd}}
\nc{\IEVEN}{I_{\even}}
\DeclareMathOperator{\KKK}{\mathsf{K}_0}
\newcommand{\MOD}[1]{{\mathsf{Mod}({#1})}}
\newcommand{\GMOD}[1]{{\mathsf{Mod}_{\mathsf{gr}}({#1})}}
\newcommand{\F}{\mathbb{F}}
\nc{\IRED}{I}
\DeclareMathOperator{\QCHAR}{\mathsf{qchar}}
\nc{\MH}{\mathcal{H}}
\newcommand{\MAPSTO}{\longmapsto}
\DeclareMathOperator{\DEG}{\mathsf{deg}}
\newcommand{\HM}[1]{\widehat{M}_{#1}}
\newcommand{\HMI}[1]{\widehat{M}^{-1}_{#1}}
\newcommand{\HI}{\widehat{I}}
\newcommand{\RP}[1]{\mathsf{RP}_3}
\nc{\bwr}{\mbox{\large$\wr$}}
\nc{\At}[1][{{i,j}}]{\mathscr{A}_{{#1}}}
\nc{\tAt}[1][{{i,j}}]{\widetilde{\mathscr{A}}_{{#1}}}
\nc{\Bt}[1][{{i,j}}]{\mathscr{B}_{{#1}}}
\nc{\tBt}[1][{{i,j}}]{\widetilde{\mathscr{B}}_{{#1}}}
\nc{\prt}[1]{\mathrm{par}(#1)}
\nc{\er}{\mathrm{e}}
\nc{\ec}{\mathrm{e}^-}
\nc{\GCM}{{generalized Cartan matrix}}
\nc{\HCO}{\widehat{\CO}}
\nc{\tCO}{\widetilde{\CO}}
\nc{\red}[1]{{\color{red}{#1}}}
\nc{\T}{\mathbb{T}}
\nc{\HC}{\mathsf{HC}}
\nc{\EV}{\mathsf{ev}}
\nc{\HRC}[1][n]{\widehat{\mathrm{RC}}_{{#1}}}
\nc{\hc}[1][n]{\ol{\mathcal{HC}}_{{#1}}}
\nc{\bphi}{\bar{\phi}}
\nc{\hgt}{\mathrm{ht}}
\newcommand{\wh}{\widehat}
\newcommand{\vph}{\vphantom}
\tikzset{node distance=1.75em, ch/.style={circle,draw,on chain,inner sep=1.5pt, line width=0.2pt},chj/.style={ch,join},line width=1pt,baseline=-1ex}
\newcommand{\dnode}[2][chj]{%
\node[#1,label={below,scale=0.8:#2}] {};
}
\newcommand{\dnodenj}[1]{%
\dnode[ch]{#1}
}
\newcommand{\dnodebr}[1]{%
\node[chj,label={right, scale=0.8:#1}] {};
}
\newcommand{\dydots}{%
\node[chj,draw=none,inner sep=1pt] {\dots};
}
\begin{document}
\title[On graded Cartan invariants]{On graded Cartan invariants of symmetric groups and Hecke algebras}

\author{Anton Evseev}
\address{School of Mathematics, University of Birmingham, Edgbaston, Birmingham B15 2TT, UK}
\email{a.evseev@bham.ac.uk}
\thanks{The first author was partially supported by the EPSRC Postdoctoral Fellowship EP/G050244 and the EPSRC grant EP/L027283.}

\author{Shunsuke Tsuchioka}
\address{Graduate School of Mathematical Sciences, University of Tokyo,
Komaba, Meguro, Tokyo, 153-8914, Japan}
\thanks{The second author was supported in part by JSPS Kakenhi Grants
 11J08363 and 26800005.}
\email{tshun@kurims.kyoto-u.ac.jp}

\date{May 21, 2015}
\keywords{symmetric groups, Hecke algebras, graded representation theory, modular representation theory,
{\kor} conjecture, Khovanov-Lauda-Rouquier algebras, 
generalized blocks, categorification, Lie theory, quantum groups}
\subjclass[2000]{Primary~81R50, Secondary~20C08}

\begin{abstract}
We consider graded Cartan matrices of the symmetric groups and the {\ih} algebras of type $A$ at roots of unity. These matrices are $\mathbb Z[v,v^{-1}]$-valued and may also be interpreted as Gram matrices of the Shapovalov form on sums of weight spaces of a basic representation of an affine quantum group. We present a conjecture predicting the invariant factors of these matrices and give evidence for the conjecture by proving its implications under a localization and certain specializations of the ring $\mathbb Z[v,v^{-1}]$.
This proves and generalizes a conjecture of {\asy} on the invariants of these matrices over $\mathbb Q[v,v^{-1}]$ and 
also generalizes the first author's recent proof of the {\kor} conjecture over $\Z$. 
\end{abstract}

\maketitle
\section{Introduction}

The main object of study in this paper is the graded Cartan matrix $C^v_{\MH_n(\corrr{\ell};\qq{\ell})}$ of the {\ih} algebra of type $A$ 
(see Definition \ref{HeckeAlg}) in quantum characteristic $\ell$, whose entries belong to the Laurent polynomial ring $\MA= \Z[v,v^{-1}]$. 
To provide background and motivation, we begin by describing the relevant constructions and results 
for the ungraded case, obtained by substituting $v=1$ (see \S\ref{subsec:ungraded}). In \S\ref{subsec:graded} we move on to the graded case and state conjectures and results on the ``invariant factors'' of $C^v_{\MH_n(\corrr{\ell};\qq{\ell})}$, which are studied in the rest of the paper.
We freely use the notation and conventions of \S\ref{NoCo}.

\subsection{Generalized modular character theory of the symmetric groups}\label{subsec:ungraded}

In~\cite{KOR}, 
K\"ulshammer, Olsson, and Robinson initiated a study of an $\ell$-analogue of the modular character theory of the symmetric group
$\mathfrak{S}_n$ for an arbitrary integer $\ell\geq 2$. They showed that many of the classical combinatorial aspects of representation theory of $\mathfrak S_n$ over a field 
of a prime characteristic $p$ (such as cores, blocks and Nakayama conjecture) generalize to the case when $p$ is not necessarily a 
prime and is replaced by $\ell$.
Our interest focuses on the generalized Cartan matrices defined in~\cite[\S1]{KOR} ($\ell$-Cartan matrices, for short) and, 
in particular, on their Smith normal forms over $\Z$.
It is convenient to define $\ell$-Cartan matrices in terms of Hecke algebras rather than the symmetric groups. 
Throughout, we consider the Hecke algebra $\MH_n(\corrr{\ell}; \qq{\ell})$ defined as usual.

\Def\label{HeckeAlg} 
For a field $\corr$ and $q\in\corr^{\times}$,
$\MH_n(\corr;q)$ is defined to be the $\corr$-algebra 
generated by $\{T_r\mid 1\leq r<n\}$ subject to the relations
\begin{align*}
(T_r+1)(T_r-q)=0,\quad T_sT_{s+1}T_s=T_{s+1}T_sT_{s+1},\quad T_tT_u=T_uT_t
\end{align*}
for $1\leq r\leq n-1$, $1\le s\le n-2$ and $1\leq t,u<n$ such that $|t-u|>1$.
For $\ell\geq 2$, we fix a field $\corrr{\ell}$ which has a primitive $\ell$-th root of unity $\qq{\ell}$. 
\edf

\Def\label{CartanInv} 
Let $A$ be a finite-dimensional algebra over a field $\corr$.
\bna
\item\label{defmodcat} We denote by $\MOD{A}$ the abelian category of finite-dimensional left $A$-modules and $A$-homomorphisms between them.
\item We define the Cartan matrix $C_A$ of $A$ to be the matrix
$([\PROC(D):D'])_{D,D'\in\IRR(\MOD{A})}\in\MAT_{|\IRR(\MOD{A})|}(\Z)$ where $\PROC(D)$ is the projective cover of $D$. 
\ee
\edf

\subsection{The {\kor} conjecture}\label{subsec:kor}

\begin{definition}\label{unimoddef}
Let $X$ and $Y$ be $n\times m$-matrices with entries in a commutative ring $\MAA$. 
The matrices $X$ and $Y$ are said to be unimodularly equivalent over $\MAA$ if $Y=UXV$ for some $U \in \GL_n (R)$ and $V\in \GL_m (R)$. 
In this case, we write $X\equiv_R Y$. 
\end{definition}

Due to a result of Donkin~\cite[\S2.2]{Don}, the matrix $C_{\MH_n(k_{\ell}; \eta_{\ell})}$ is 
unimodularly equivalent over $\Z$ to the aforementioned $\ell$-Cartan matrix of $\mathfrak S_n$. 
Since $k_{\ell}$ is a splitting field for $\MH_n(k_{\ell}; \eta_{\ell})$ (see also ~\cite[\S2.2]{Don}),
the Smith normal form of $C_{\MH_n(k_{\ell}; \eta_{\ell})}$ does not depend on the choice of $k_{\ell}$ or $\eta_{\ell}$.

It is a standard result in modular representation theory (due to {\bn}) that, for a prime $p$ and a finite group $G$, 
the elementary divisors of $C_{\overline{\mathbb F}_p G}$ 
are described in terms of 
$p$-defects of $p$-regular conjugacy classes of $G$. 
When $p$ is replaced with a possibly composite number $\ell$, 
the Smith normal form of $C_{\MH_n(k_{\ell}; \eta_{\ell})}$ is more complicated:

\begin{theorem}\label{thm:kor}
Let $\ell\geq 2$. If $k\in \Z$, write $\ell_k=\ell/\GCD(\ell,k)$. For a partition $\lambda$, define 
\begin{align}
r_{\ell}(\lambda)=
\prod_{k\in\N\setminus \ell\Z}\ell_k^{\lfloor \frac{m_k(\lambda)}{\ell} \rfloor}\cdot 
\bigl\lfloor \frac{m_k(\lambda)}{\ell} \bigr\rfloor!_{\PRS(\ell_k)}.
\label{classicalr}
\end{align}
Then
\begin{align*}
C_{\MH_n(\corrr{\ell};\qq{\ell})}\CONG{\Z}\DIAG(\{r_{\ell}(\lambda)\mid \lambda\in\CPAR_{\ell}(n)\}),
\end{align*}
where 
$\pi(\ell_k)$ is the set of prime divisor of $\ell_k$ and 
$\CPAR_{\ell}(n)$ is the set of $\ell$-class regular partitions of $n$ (see~\S\ref{NoCo} below).
\end{theorem}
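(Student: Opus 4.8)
The plan is to pass to a Hecke algebra over a field of characteristic zero, reinterpret the Cartan matrix as the Gram matrix of a decomposition lattice, and then reduce the computation of the Smith normal form --- via block theory and known derived equivalences --- to the explicit case of Rouquier blocks. As noted after Definition~\ref{unimoddef}, the Smith normal form of $C_{\MH_n(\corrr{\ell};\qq{\ell})}$ does not depend on the pair $(\corrr{\ell},\qq{\ell})$, so I would take $\corrr{\ell}=\C$ with $\qq{\ell}$ a primitive $\ell$-th root of unity. Then $\MH_n(\C;\qq{\ell})$ is a split cellular algebra with Specht modules $\{S^\lambda\}$ labelled by the partitions of $n$ and simple modules $\{L^\mu\}$ labelled by the $\ell$-regular partitions of $n$, and Brauer reciprocity gives $C_{\MH_n(\C;\qq{\ell})}=\TRANS{D}D$, where $D=\bigl([S^\lambda:L^\mu]\bigr)$ is the decomposition matrix. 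Thus $C_{\MH_n(\C;\qq{\ell})}$ is the Gram matrix, relative to the standard inner product on $\Z^{\PAR(n)}$, of the sublattice spanned by the columns of $D$; in particular its elementary divisors are an invariant of the Morita, and even the derived, equivalence class of $\MH_n(\C;\qq{\ell})$.

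Next I would use the block decomposition: $\MH_n(\C;\qq{\ell})$ splits into blocks $B_{\kappa,w}$ indexed by pairs consisting of an $\ell$-core $\kappa$ and a weight $w\ge 0$ with $|\kappa|+\ell w=n$, and $C_{\MH_n(\C;\qq{\ell})}$ is block-diagonal accordingly. The Chuang--Rouquier derived-equivalence theorem, which also applies to Hecke algebras at roots of unity, implies that blocks of equal weight are derived equivalent, so $C_{B_{\kappa,w}}\CONG{\Z}C_{B_{\kappa',w}}$ whenever the weights agree. Hence the Smith normal form of $C_{\MH_n(\C;\qq{\ell})}$ is assembled, with the correct multiplicities, from those of the Cartan matrices $C_{B_w}$ of one block of each weight $w$, and it suffices to take $B_w$ to be a Rouquier (RoCK) block. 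The decomposition matrix of $B_w$ is explicitly known in terms of Littlewood--Richardson coefficients, and for $w<\ell$ the block $B_w$ is even Morita equivalent to the wreath product $\mathcal{B}_{\ell}\wr\Sym_w$, where $\mathcal{B}_{\ell}$ is the weight-one block --- a Brauer tree algebra whose tree is a line with $\ell-1$ edges, so that $C_{\mathcal{B}_{\ell}}$ is the $(\ell-1)\times(\ell-1)$ tridiagonal matrix with diagonal $2$ and off-diagonal $1$, which satisfies $C_{\mathcal{B}_{\ell}}\CONG{\Z}\DIAG(1,\dots,1,\ell)$.

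The remaining work is a symmetric-function computation. In either description, $C_{B_w}$ behaves like a $w$-th plethystic symmetric power of $C_{\mathcal{B}_{\ell}}$: its elementary divisors are controlled by the diagonalization $C_{\mathcal{B}_{\ell}}\CONG{\Z}\DIAG(1,\dots,1,\ell)$ together with the wreathing by the symmetric groups $\Sym_m$ for $m\le w$. Carrying this out over $\Z$, and reindexing the resulting elementary divisors via the Glaisher correspondence between $\ell$-regular and $\ell$-class-regular partitions, I would match the multiset so obtained with $\{r_\ell(\lambda):\lambda\in\CPAR_\ell(n)\}$. In the resulting dictionary, $\lfloor m_k(\lambda)/\ell\rfloor$ is the number of $\ell$-hooks extracted at part-size $k$, the factor $\ell_k=\ell/\gcd(\ell,k)$ measures how a part of size $k$ interacts with the $\ell$ runners of the abacus (a defect-type computation for the centralizer of the class of cycle type $\lambda$), and the $\pi(\ell_k)$-part of $\lfloor m_k(\lambda)/\ell\rfloor!$ is the surviving contribution of the symmetric group permuting the $\lfloor m_k(\lambda)/\ell\rfloor$ bundles of $\ell$ equal parts; so the product in \eqref{classicalr} is exactly what emerges.

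The main obstacle is to run this last step \emph{integrally}. Over $\Q[v,v^{-1}]$ one only needs ranks and determinants, which is essentially the content of the \asy{} computation, whereas pinning down the exact elementary divisors over $\Z$ requires combining that rational information with a \bn-type analysis at each prime $p\mid\ell$, and --- most delicately --- handling the weights $w\ge\ell$, for which the RoCK block is no longer literally a wreath product. There I would argue directly with the Littlewood--Richardson description of the decomposition matrix, running an induction on $w$ along the $i$-induction and $i$-restriction functors (equivalently, transporting along a derived equivalence onto the wreath product), the hard point being to control the induction at the level of the full Cartan lattice over $\Z$ rather than merely after extending scalars to $\Q$ or localizing at a single prime.
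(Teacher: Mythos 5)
Your proposal does not follow the paper's route, and as written it contains a genuine gap rather than a complete argument. A point of bookkeeping first: this paper does not prove Theorem~\ref{thm:kor} from scratch --- it is quoted from the first author's earlier work \cite{Evs} --- but the paper's own machinery recovers it as the $\theta=1$ case of Theorem~\ref{maintheorem} (\ref{subs}), via the Shapovalov-form/boson-fermion reduction of Theorem~\ref{gradedcartan} together with Lemma~\ref{bhlem} and Corollary~\ref{mainimplication}. That route replaces your block-theoretic reductions by a linear-algebraic one: each weight-$d$ block is unimodularly equivalent to a direct sum of matrices $M_s\DIAG(\{J^v_{\ell}(\lambda)|_{v=1}\mid\lambda\in\PAR(s)\})M_s^{-1}$ with $s\le d$, where $M_s$ is the table of Young permutation characters, and the whole theorem reduces to diagonalizing these matrices over $\Z_{(p)}$ for every prime $p$.

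The gap in your proposal is that you defer exactly the step in which all of the difficulty of the KOR conjecture is concentrated. Your reductions via Brauer reciprocity, the block decomposition, and Chuang--Rouquier derived equivalences are legitimate and do bring you to one block per weight; but (i) for weights $w\ge\ell$ the RoCK block is not a wreath product, and you only say you ``would argue directly'' with Littlewood--Richardson data while naming the integral control of the induction as the unresolved hard point; and (ii) even for $w<\ell$, computing the Smith normal form over $\Z$ of the Cartan matrix of $\mathcal{B}_{\ell}\wr\mathfrak{S}_w$ \emph{is} the plethystic symmetric-power problem $M_w\DIAG(\cdots)M_w^{-1}\CONG{\Z}\DIAG(\cdots)$, i.e.\ the reduced KOR problem itself. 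The rational SNF and the determinant are easy (that is essentially the Brundan--Kleshchev and Ando--Suzuki--Yamada level of information), but the integral elementary divisors resisted proof for a decade: Hill could only treat $\ell$ whose prime divisors occur with multiplicity at most $p$. What the paper (following \cite{Evs}) supplies at precisely this point, and what your sketch lacks, is: the further reduction to the $p$-power submatrix $N_n^{(p)}$ (Proposition~\ref{reductiontoN}), integrality statements proved via Brauer's characterization of characters (Lemma~\ref{alemma}, Propositions~\ref{usegeneralizedcharacter} and~\ref{intp}), and the $p$-adic valuation estimates feeding Lemma~\ref{DVRlinearalgebra} that force the diagonalization in \S\ref{sec:5}. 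Without an argument of comparable strength at each prime dividing $\ell$, the ``matching of multisets'' in your final step is a heuristic, not a proof.
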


This result was proposed as a conjecture by K{\"u}lshammer, Olsson and Robinson (\cite[Conjecture 6.4]{KOR}) and is known as the \emph{KOR conjecture}. 
The determinant of the Cartan matrix $C_{\MH_n(\corrr{\ell};\qq{\ell})}$ was first computed by Brundan and Kleshchev~\cite[Corollary 1]{BK1}
and was shown to agree with the conjecture in~\cite{KOR}. 
Hill~\cite[Conjecture 10.5]{Hil} gave a conjectural description of the invariant factors of the Cartan matrix of each individual block of $\MH_n(\corrr{\ell};\qq{\ell})$ and proved this description in the case when each prime divisor $p$ of $\ell$ appears with multiplicity at most $p$ in the prime decomposition of $\ell$. 
The description was shown to imply Theorem~\ref{thm:kor} by Bessenrodt and Hill~\cite[Theorem 5.2]{BH}.
Finally, Hill's conjecture and hence Theorem~\ref{thm:kor} were proved in full generality by the first author~\cite[Theorem 1.1]{Evs}.  

The proofs in~\cite{Hil} and~\cite{Evs} both use a reduction of the
KOR conjecture to the problem of finding the Smith normal form of a certain $\PAR(d) \times \PAR(d)$-matrix which is smaller than 
$C_{\MH_n(\corrr{\ell};\qq{\ell})}$; here, $d$ is not greater than the $\ell$-weight of a fixed block of $\MH_n(\corrr{\ell};\qq{\ell})$.
The reduction (for an individual block of $\MH_n(\corrr{\ell};\qq{\ell})$) is due to Hill: see~\cite[Theorem 1.1]{Hil}; for an alternative approach, see~\cite[\S3]{Evs}. 
Among the main conjectures and results of the present paper are 
Conjecture~\ref{ourgradedconjecture}, which is a graded version of the reduced problem, and 
Corollary~\ref{mainimplication}, which is a graded version of the reduction. The ungraded versions are recovered by substituting $v=1$.

\subsection{Graded Cartan matrices and Shapovalov forms}\label{subsec:graded}

While the KOR conjecture is now a theorem, the proof in~\cite{Evs} relies on technical combinatorial arguments and does not give a satisfactory conceptual understanding of the result. In particular, unlike in the special case when $\ell$ is a prime and the {\bn} result applies, it is hard to discern a link between the statement or the proof of the KOR conjecture and the group-theoretic structure of $\mathfrak S_n$.
In a search for better understanding, we consider a remarkable 
grading on the Hecke algebras  discovered independently by {\bk}~\cite[Theorem 1.1]{BK2} and Rouquier~\cite[Corollary 3.20]{Rou}.
It is a consequence of an isomorphism between $\MH_n(\corrr{\ell};\qq{\ell})$ and 
a cyclotomic KLR algebra $R^{\Lambda_0}_n(A^{(1)}_{\ell-1})$ defined by {\kl}~\cite[\S3.4]{KL1} and Rouquier~\cite[\S3.2.6]{Rou}. 
A similar isomorphism and grading exist for the degenerate case, i.e., for the symmetric 
group algebra $\mathbb F_p \mathfrak S_n$ (see ~\cite[Theorem 1.1]{BK2} and ~\cite[Corollary 3.17]{Rou}). 
Using the grading, one defines the graded Cartan matrix 
$C^v_{\MH_n(\corrr{\ell};\qq{\ell})}$ with entries in the ring $\MA=\mathbb Z[v,v^{-1}]$ (see Definition~\ref{GCartanInv}). 
It is a refinement of $C_{\MH_n(\corrr{\ell};\qq{\ell})}$ in the sense that
we have $C_{\MH_n(\corrr{\ell};\qq{\ell})}=C^v_{\MH_n(\corrr{\ell};\qq{\ell})}|_{v=1}$. 

\begin{remark}
Rouquier~\cite{Rou2} has shown that interesting gradings are likely to exist for a large class of blocks of arbitrary finite groups. More precisely, he has constructed a grading on local blocks (i.e., blocks with normal defect group) whenever the defect group is abelian and has shown that, subject to the Brou{\'e} abelian defect group conjecture, these gradings can be transferred to arbitrary blocks with abelian defect groups. A study of the corresponding graded Cartan matrices up to unimodular equivalence may be of considerable interest, though is beyond the scope of this paper. 
\end{remark}

An alternative approach to defining $C_{\MH_n(\corrr{\ell};\qq{\ell})}$ is via the Shapovalov form 
on 
the basic representation $V(\Lambda_0)$ of the affine
{\km} Lie algebra of type $A^{(1)}_{\ell-1}$ (see~\cite{BK1, Hil}).
Generalizing to the graded case is natural from this point of view as well, as
one can replace the universal enveloping algebra of the {\km} algebra with
its quantized version $U_v (A^{(1)}_{\ell-1})$. The corresponding quantum
Shapovalov forms were studied by the second author~\cite{Tsu} and are reviewed
in \S\ref{pretsu2} below. The matrix $C^v_{\MH_n(\corrr{\ell};\qq{\ell})}$
can be described in terms of Gram matrices of quantum Shapovalov forms on weight
spaces of $V(\Lambda_0)$ (see Proposition~\ref{prop:Shapblock}). 
Since Shapovalov forms play an important role in representation theory of Lie
algebras and quantum groups, this description provides further motivation for
studying $C^v_{\MH_n(\corrr{\ell};\qq{\ell})}$. 

\subsection{A graded analog of the {\kor} conjecture}\label{subsec:graded_kor}
We propose the following graded version of the KOR conjecture.

\Con\label{gradedKORcon}
For $\ell\geq 2$, we have (see also Definition \ref{GCartanInv})
\begin{equation}\label{eq:grKOR}
C^v_{\MH_n(\corrr{\ell};\qq{\ell})}\CONG{\MA}\DIAG(\{r^v_{\ell}(\lambda)\mid \lambda\in\CPAR_{\ell}(n)\}).
\end{equation}
Here we put $\ell_k=\ell/\GCD(\ell,k)$ and for $\lambda\in\PAR$ define 
\begin{align}
r^{v}_{\ell}(\lambda)=\prod_{k\geq 1}\prod_{t=1}^{\lfloor m_k(\lambda)/\ell\rfloor}\left[\ell_kt_{\PRS(\ell_k)}\right]_{\GCD(\ell,k)t_{\PRS(\ell_k)'}},
\label{quantumr}
\end{align}
where the right-hand side is interpreted according to \S\ref{subsubsec:int} and \S\ref{subsubsec:qi}.
\encon

The second author stated this conjecture in the special case when
$\ell$ is a prime power (see~\cite[Conjecture 6.18]{Tsu}) and computed the
determinant of $C^v_{\MH_n(\corrr{\ell};\qq{\ell})}$, which agrees with the
conjecture (see~\cite[Theorem 6.11]{Tsu}).

\begin{remark}
Conjecture \ref{gradedKORcon} implies Theorem~\ref{thm:kor}:
comparing \eqref{classicalr} and \eqref{quantumr},
we have 
\begin{align*}
\left({\textstyle{\prod_{t=1}^{\lfloor m_k(\lambda)/\ell\rfloor}\left[\ell_kt_{\PRS(\ell_k)}\right]_{\GCD(\ell,k)t_{\PRS(\ell_k)'}}}}\right)|_{v=1}=
\ell_k^{\lfloor \frac{m_k(\lambda)}{\ell} \rfloor}\cdot\bigl\lfloor \frac{m_k(\lambda)}{\ell} \bigr\rfloor!_{\PRS(\ell_k)}.
\end{align*}
\end{remark}

While $C^v_{\MH_n(\corrr{\ell};\qq{\ell})}$ has a description in terms of affine Kazhdan-Lusztig polynomials 
by virtue of the graded version of Lascoux-Leclerc-Thibon-Ariki theory~\cite[Corollary 5.15]{BK3} (see also ~\cite[Remark 5.7]{Tsu}),
there is no easy combinatorial description for the entries of $C^v_{\MH_n(\corrr{\ell};\qq{\ell})}$ in general.
Nonetheless, 
we are able to reduce Conjecture~\ref{gradedKORcon} to a conjecture concerning matrices that do admit such a description up to unimodular equivalence
over $\MA$.

\Def\label{IandJ}
 For $\ell\geq 2$ and $\lambda\in\PAR$, we define $I^{v}_{\ell}(\lambda),J^{v}_{\ell}(\lambda)\in\MA$ by
\begin{align}
I^{v}_{\ell}(\lambda)=\prod_{k\geq 1}\prod_{t=1}^{m_k(\lambda)}
\left[\ell_kt_{\PRS(\ell_k)}\right]_{\GCD(\ell,k)t_{\PRS(\ell_k)'}},\quad
J^{v}_{\ell}(\lambda)=\prod_{k\geq 1}[\ell]_k^{m_k(\lambda)},
\label{IaJ}
\end{align}
where again we put $\ell_k=\ell/\GCD(\ell,k)$. 
\edf

The following conjecture involves a matrix $M_n$, which for the purposes of the statement may be assumed to be the character table of the symmetric group $\mathfrak S_n$ (see Definition~\ref{MM} and Remark~\ref{remark:M} for details).

\Con\label{ourgradedconjecture}
For $\ell\geq 2$ and $n\geq 0$, 
we have the following unimodular equivalence over $\MA$:
\begin{align}
M_n\DIAG(\{J^{v}_{\ell}(\lambda)\mid\lambda\in\PAR(n)\})M_n^{-1}\CONG{\MA} \DIAG(\{I^{v}_{\ell}(\lambda)\mid\lambda\in\PAR(n)\}).
\label{ourgradedconjectureshiki}
\end{align}
\encon

In \S\ref{sec:3}, we will show that Conjecture~\ref{ourgradedconjecture} implies Conjecture~\ref{gradedKORcon} (see Corollary~\ref{mainimplication}). As is mentioned above, this generalizes a reduction  for the ungraded case proved in~\cite{Hil, BH}. 

\subsection{Evidence for Conjecture~\ref{ourgradedconjecture}}


Although there is no \emph{a priori} reason to assert that $C^v_{\MH_n(\corrr{\ell};\qq{\ell})}$ is unimodularly equivalent to a
diagonal matrix since $\MA$ is not a principal ideal domain (PID, for short),
we can give 
evidence that such an equivalence is likely to exist, 
which suggests that a hidden structure lies behind it and that one is unlikely to see this structure just by considering the ungraded case. 

\Th\label{maintheorem}
For $\ell\geq 2$ and $n\geq 0$, 
let $X$ and $D$ denote the matrices on the left-hand and right-hand sides of~\eqref{ourgradedconjectureshiki}. 
Then, we have
\bna
\item\label{ASYc} $X\equiv_{\Q[v,v^{-1}]} D$;
\item\label{subs} for any $0\ne \theta\in\Q$, we have $X|_{v=\theta} \equiv_{\Z[\theta, \theta^{-1}]} D|_{v=\theta}$.
\ee
Hence, 
the unimodular equivalence of Conjecture~\ref{gradedKORcon} holds over 
$\mathbb Q[v,v^{-1}]$ and holds over $\Z[\theta,\theta^{-1}]$ when one substitutes any $\theta\in \Q^{\times}$ for $v$.
\enth

The last statement follows from parts~\eqref{ASYc} and~\eqref{subs} due to Corollary~\ref{cor:sp}. 

\Rem
We note the following consequence and special case:
\bna
\item 
Combined with Proposition~\ref{gradedHillcon},
Theorem \ref{maintheorem} (\ref{ASYc}) settles
affirmatively a conjecture of {\asy} (\cite[Conjecture 8.2]{ASY}) and
further generalizes it to the case of an arbitrary $\ell\geq 2$, not necessarily a prime.
\item The case $\theta=1$ of Theorem \ref{maintheorem} (\ref{subs}) corresponds to the KOR conjecture (Theorem~\ref{thm:kor}). 
\ee
\enrem

Our proof of Theorem~\ref{maintheorem} relies on the fact that the equivalences in the theorem are over PIDs (see Remark~\ref{locgol}). In part, the proof is a generalization of the one in~\cite{Evs}. 

Since $\MA$ is $2$-dimensional,
it appears that completely new ideas will be needed to prove a unimodular equivalence over $\MA$. 
In particular, while the ungraded version of Conjecture~\ref{ourgradedconjecture} is easily reduced to the case when $\ell$ is a prime power (see~\cite{Hil}), there is no such apparent reduction in the graded case.
The authors hope that this paper will help advertise Conjecture \ref{ourgradedconjecture} (and its meaning) to a wide audience not restricted to representation theorists, as
the conjecture is stated purely in the language of combinatorics and linear algebra.

\subsection{Organization of the paper}
In \S\ref{sec:Mn} we introduce the matrix $M_n$, which is the table of values of Young permutation characters of the symmetric group $\mathfrak{S}_n$.
We also introduce a ``$p$-local'' and a multicolored version of $M_n$, and we prove a number of integrality results about these matrices that are needed later. 
In \S\ref{sec:3}, we show how Conjecture~\ref{gradedKORcon} may be interpreted in terms of certain representations of quantum groups. We prove Theorem~\ref{gradedcartan}, which shows that the graded Cartan matrix 
$C_{\MH_n(\corrr{\ell};\qq{\ell})}^v$ (or $C^v_{\F_p\mathfrak{S}_n}$) is unimodularly equivalent to a block-diagonal matrix with blocks of the form given by the left-hand side of~\eqref{ourgradedconjectureshiki}. Using this, we show that Conjecture~\ref{ourgradedconjecture} implies Conjecture~\ref{gradedKORcon}. 
Theorem~\ref{maintheorem} is proved in \S\ref{sec:4} and \S\ref{sec:5}. In \S\ref{sec:4}, we prove Theorem~\ref{maintheorem} (\ref{ASYc}) and reduce Theorem~\ref{maintheorem} (\ref{subs}) to Theorem~\ref{meidai}, which asserts a certain unimodular equivalence over the local ring $\Z_{(p)}$ and is proved in \S\ref{sec:5}. 
In \S\ref{fittingideal} (and \S\ref{varequi}), we discuss unimodular equivalences over arbitrary commutative rings and possible results that would be stronger than Theorem~\ref{maintheorem} but weaker than Conjecture~\ref{ourgradedconjecture}, including possible
further evidence 
in terms of equivalences over PIDs.

\subsection{Notation and conventions}\label{NoCo}
\subsubsection{Commutative rings}\label{CoAl}
All commutative rings are assumed to contain a multiplicative identity, and homomorphisms 
between commutative rings are assumed to respect those identities. 
We denote by $\MSPEC(R)$ the set of maximal ideals of a commutative ring $R$. 
\subsubsection{Matrices}\label{matconv}
Let $\MAA$ be a commutative ring. For any integer $\ell\ge 0$, we denote by $\MAT_{\ell} (\MAA)$ the algebra of all $R$-valued $\ell\times \ell$-matrices.
More generally, $\MAT_S (\MAA)$ is the algebra of $S\times S$-matrices for any finite set $S$.  
For a finite set $S$, $1_S$ denotes the identity $S\times S$-matrix.
For an assignment $S\to R$, $s \mapsto r_s$, 
we denote by $\DIAG(\{ r_s \mid s\in S \})$ the diagonal matrix with the $(s,t)$-entry equal to $\delta_{st} r_s$ for all $s,t\in S$. 
We often denote by $M_{rs}$ the $(r,s)$-entry of a matrix $M$. 
If $S= \bigsqcup_i S_i$ is a disjoint union and $M_i \in \MAT_{S_i} (\MAA)$ for each $i$, then $M=\bigoplus_i M_i$ is the block-diagonal matrix 
given by $M_{rs} = (M_i)_{rs}$ if $r$ and $s$ belong to the same subset $S_i$ and $M_{rs}=0$ otherwise. 
%
We say that matrices $X,Y\in\MAT_m(\MAA)$ are row (resp.\ column) equivalent over $\MAA$ 
if there exists $U\in \GL_m(\MAA)$ such that $X=UY$ (resp.\ $X=YU$).

\subsubsection{Discrete valuation rings}\label{SSSdvr}
When considering a discrete valuation ring $\MAA$
with valuation $\nu\colon K^{\times}\twoheadrightarrow\Z$, where $K$ is the field of fractions of $\MAA$,
we set $\nu(0)=\infty$ where $\infty$ is a symbol satisfying $\infty>c$ for all $c\in\Q$.
For a prime $p$, the valuation $\nu_p\colon \Q^{\times}\twoheadrightarrow\Z$ is defined by $\nu_p (p^m a/b) = m$ for $m\in \Z$ and $a,b\in \Z\setminus p\Z$.
It corresponds to the discrete valuation ring $\Z_{(p)}=\{a/b\in\Q\mid b\not\in p\Z\}$.

\subsubsection{Integers}\label{subsubsec:int}
We write $\N= \{0,1,2,\ldots \}$ and $\PRIMES$ for the set of all prime numbers. For $n\geq 1$, we denote by
$\PRS(n)$ the set of all prime divisors of $n$. 
For $n\geq 1$ and a subset $\Pi\subseteq\PRIMES$, we define
the $\Pi$-part of $n$ by $n_{\Pi}=\prod_{p\in\Pi}p^{\nu_p(n)}$.
We write $\Pi' = \PRIMES \setminus \Pi$ and $p' = \PRIMES \setminus \{ p \}$ for all $p\in \PRIMES$.
For $a,b\geq 1$, $\GCD(a,b)$ is the greatest common divisor of $a$ and $b$. 

\subsubsection{Quantum rings}\label{subsubsec:qi}
Let $v$ be an indeterminate. In much of the paper, we work over the field $\cor=\Q(v)$ and its subring $\MA=\Z[v,v^{-1}]$.
The $\Q$-algebra involution $\BAR\colon \cor\to\cor$ is defined by $\BAR(v) = v^{-1}$.
For $t\in \Z$, we write $\INFL_t\colon\MA\to\MA$ for the ring homomorphism given by $v\mapsto v^t$.
For $m\ge 1$ and $n\in \Z$, the quantum integer $[n]_m$ is defined by 
$[n]_{m}=(v^{mn}-v^{-mn})/(v^{m}-v^{-m})\in\MA$. Note that $[n]_m|_{v=1} = n$. 
We set $[n]_m ! = [n]_m [n-1]_m \cdots [1]_m$. 
For a field $\corr$ and $q\in\corr^{\times}$,
the quantum characteristic of $q$ is defined by $\QCHAR_{q}\corr=\min\{k\geq 1\mid [k]|_{v=q}=0\}$ if the set on the right-hand side is non-empty and is set to be $0$ otherwise.

\subsubsection{Groups and generalized characters}\label{SSSgroups}
Let $G$ be a finite group. If $R$ is a subring of $\C$, we say that a function $\chi \colon G \to \C$ is an \emph{$R$-generalized character} of $G$ if 
$\chi$ belongs to the $R$-span of the irreducible characters of $G$. By a \emph{generalized character} we mean a $\Z$-generalized character. 
If $g,h \in G$, we write $g \equiv_G h$ if $g$ and $h$ are $G$-conjugate. If $p$ is a prime, then, as usual, $g_p, g_{p'} \in \langle g\rangle
\subseteq G$ are 
the $p$-part and the $p'$-part of $g$ respectively, so that $g=g_{p} g_{p'} = g_{p'} g_p$, the order of $g_p$ is a $p$-power and the order of $g_{p'}$ is prime to $p$. 

\subsubsection{Partitions}\label{SSSPartitions}
We write $\varnothing$ for the empty partition.
For a partition $\lambda=(\lambda_1,\lambda_2,\ldots)$, 
we define $m_k(\lambda)=|\{i\geq 1\mid \lambda_i=k\}|$ for $k\geq 1$. Also, $\ell(\lambda)=\sum_{i\geq 1}m_i(\lambda)$ and $|\lambda|=\sum_{i\geq 1}\lambda_i$.
We denote by $\PAR(n)$ (resp.\ $\CPAR_{s}(n),\RPAR_{s}(n)$) the set of 
all (resp.\ $s$-class regular, $s$-regular) partitions of $n\geq 0$. 
Recall that, for $s\geq 1$, a partition $\lambda$ is called 
\bnum
\item \emph{$s$-class regular} if we have $m_{ks}(\lambda)=0$ for all $k\geq 1$,
\item \emph{$s$-regular} if we have $m_{k}(\lambda)<s$ for all $k\geq 1$.
\ee
We put $\PAR=\bigsqcup_{n\geq 0}\PAR(n)$ and 
$\PAR_m(n)=\{(\lambda^{(i)})_{i=1}^{m}\in\PAR^m\mid \sum_{i=1}^{m}|\lambda^{(i)}|=n\}$ for $m,n\geq 0$.

For $n\geq 0$, $p\in\PRIMES$ and $\nu\in\CPAR_{p}(n)$, we define
$\PAR_p(n,\nu)=\{\lambda\in\PAR(n)\mid  \sum_{s\geq 0}m_{jp^s}(\lambda)p^s=m_j(\nu) \, \forall j\in\N\setminus p\Z\}$.
Further, 
$\POW_p(n)=\PAR_{p}(n,(1^n))$ and $\POW_p=\bigsqcup_{n\geq 0}\POW_p(n)$ is the set of the partitions with all parts being powers of $p$.

For $\lambda,\mu\in\PAR$, the partition $\lambda+\mu$ is defined by $m_i(\lambda+\mu)=m_i(\lambda)+m_i(\mu)$ for $i\geq 1$.

\vskip 3mm

\noindent{\bf Acknowledgments.} S.T. thanks Yuichiro Hoshi, Yoichi Mieda and Hiraku Kawanoue for discussions on \S\ref{fittingideal}.
In particular, Theorem~\ref{kawanouethm} is due to Kawanoue (see Remark \ref{kcont}).

\section{The matrix $M_n$}\label{sec:Mn}
\subsection{Definition of $M_n$}\label{SSSsym}
As usual, let $\Lambda=\bigoplus_{n\geq 0}\varprojlim_{m\geq 0}\Z[u_1,\ldots,u_m]^{\mathfrak{S}_m}_n$
be the ring of symmetric functions
(see~\cite[\S6]{Ful} or~\cite[\S I.2]{Mac}) where $\Z[u_1,\ldots,u_m]_n$ is the set of homogeneous polynomials of degree
$n$. 

The ring $\Lambda$ is categorified by the module categories $\{\MOD{\Q\mathfrak{S}_n}\}_{n\geq 0}$.
More precisely, let $\chi_V$ denote the character afforded by a module $V\in\MOD{\Q\mathfrak{S}_n}$. 
For $\mu\in\PAR$, consider the power sum symmetric function $p_{\mu}=\prod_{i=1}^{\ell(\mu)}p_{\mu_i}$, where
$p_k=\sum_{j\geq 1}u_j^{k}$ for $k\geq 1$. Let $C_{\mu}$ be  the
conjugacy class of elements of cycle type $\mu$ in  
$\mathfrak{S}_n$. For $\mu\in \PAR$, let 
\begin{equation}\label{eq:z}
 z_{\mu}=\prod_{i\geq 1}m_i(\mu)!\cdot i^{m_i(\mu)},
\end{equation}
so that $\#C_{\mu}= |\mu|!/z_{\mu}$.  
Then the following character map is an isometry (see~\cite[\S7.3]{Ful}):
\begin{align}\label{charactermap}
\CATISO\colon \bigoplus_{n\geq 0}\KKK(\MOD{\Q\mathfrak{S}_n})\isoto \Lambda,\quad
[V]\MAPSTO \sum_{\mu\in\PAR(n)}\frac{1}{z_\mu}\chi_{V}(C_{\mu})p_{\mu}, 
\end{align}
where we write $\chi_V (C_{\mu})$ for the value of $\chi_V$ on an arbitrary element of $C_{\mu}$. 

\Def\label{MM}
Let $\lambda,\mu \in \PAR(n)$.
Consider the parabolic subgroup 
\[
\mathfrak{S}_{\lambda}=\AUT(\{1,\ldots,\lambda_1\})\times \AUT(\{\lambda_1+1,\ldots,\lambda_1+\lambda_2\})\times \cdots\cong \otimes_{i\geq1}\mathfrak{S}_{\lambda_i}
\] 
of $\mathfrak{S}_n$, and let $\TRIV_{\mathfrak{S}_{\lambda}}$ be its trivial representation.
We set $M_{\lambda,\mu} = \chi_{\IND_{\mathfrak{S}_{\lambda}}^{\mathfrak{S}_n}\TRIV_{\mathfrak{S}_{\lambda}}} (C_{\mu})$ and put 
$M_n=(M_{\lambda,\mu})\in\MAT_{\PAR(n)}(\Z)$. 
\edf

\Rem\label{remark:M}
Recall the complete symmetric function $h_{\mu}=\prod_{i\geq 1}h_{\mu_i}$ for $\mu\in\PAR$ where
\begin{align}
\sum_{n\geq 0}h_nt^n = \prod_{i\geq
1}(1-u_it)^{-1}=\prod_{r=1}^{\infty}\exp\left(\frac{p_rt^r}{r}\right).
\label{hphenkan}
\end{align}
There is a well-known identity 
$\CATISO([\IND_{\mathfrak{S}_{\lambda}}^{\mathfrak{S}_n}\TRIV_{\mathfrak{S}_{\lambda}}])=h_{\lambda}$ for $\lambda\in\PAR(n)$ (see~\cite[\S7.2, Lemma 4]{Ful}).
Further, we have 
\begin{align}
h_{\lambda}=\sum_{\mu\in\PAR(n)}\frac{1}{z_{\mu}}M_{\lambda,\mu}p_{\mu},\quad
p_{\lambda}=\sum_{\mu\in\PAR(n)}M_{\mu,\lambda}m_{\mu}
\label{hp}
\end{align}
for $\lambda\in\PAR(n)$, where $m_{\mu}$ is the monomial symmetric function (i.e., the function whose image 
in $\Z[u_1,\ldots,u_m]_n$ for $m\geq \ell(\lambda)$ is the sum of the elements of the orbit of the
monomial $\prod_{j=1}^{\ell(\mu)}u_j^{\mu_j}$ under the action of $\mathfrak
S_m$ on the variables); see~\cite[\S6, (11), (12)]{Ful}.
Using the second identity~\eqref{hp}, we see that $M_{\lambda,\mu}$ has the following explicit combinatorial descriptions:
\bna
\item $M_{\lambda,\mu}$ is the coefficient of
$\prod_{j=1}^{\ell(\lambda)}u_j^{\lambda_j}$ in $\prod_{i\geq
1}(u_1^i+\cdots+u_{\ell(\lambda)}^i)^{m_i(\mu)}$,
\item $M_{\lambda,\mu}=\#\mathcal{M}_{\lambda,\mu}$
where 
\[
\mathcal{M}_{\lambda,\mu}=\{f\colon \{1,\ldots,\ell(\mu)\}\to\{1,\ldots,\ell(\lambda)\}\mid \textstyle\sum_{j\in f^{-1}(i)}\mu_j=\lambda_i
\text{ whenever } 1\le i\le l(\lambda)\}.
\]
\ee
\enrem

\Rem\label{remark:M2}
It is well known that the $\Z$-span of $\{ \chi_{\IND_{\mathfrak{S}_{\lambda}}^{\mathfrak S_n} \TRIV_{\mathfrak S_{\lambda}}} \mid \lambda\in \PAR(n) \}$ is the whole set of generalized characters of $\mathfrak S_n$ (see ~\cite[\S7.2, Corollary]{Ful}); equivalently, the matrix $M_n$ is row equivalent over $\Z$ to the character table of $\mathfrak S_n$ (in which, as usual, rows correspond to irreducible characters and columns to conjugacy classes, labeled by their cycle types). Therefore, as we claimed in \S\ref{subsec:graded_kor}, the matrix on the left-hand side of~\eqref{ourgradedconjectureshiki} stays in the same unimodular equivalence class if one replaces $M_n$ by the character table of $\mathfrak S_n$. 
\enrem

In the remainder of this section, we prove a number of results on the matrix $M_n$ and some of its analogues, mainly of a combinatorial nature.
Proposition~\ref{nuestimateM} will not be used until \S\ref{CASETWO}. The results in \S\ref{subsec:22} are used in \S\ref{sec:4} and \S\ref{sec:5}, whereas the results of \S\ref{subsec:23} are needed in \S\ref{sec:3}. 


\Prop\label{nuestimateM}
Let $n\geq 0$ and let $\lambda,\mu\in\PAR(n)$.
\bna
\item\label{nuestimateMa} $M_{\lambda,\lambda}=\prod_{j\geq 1}m_j(\lambda)!$ and $M_{\lambda,\lambda}$ divides $M_{\lambda,\mu}$;
\item\label{nuestimateMb} $\ell(\lambda)\leq \ell(\mu)$ if $M_{\lambda,\mu}>0$;
\item\label{nuestimateMc} Let $p\geq 3$ be a prime, and assume that $M_{\lambda,\mu}>0$ and $\lambda\ne \mu$. Then
$\nu_p(M_{\lambda,\mu})>\ell(\lambda)-\ell(\mu)+\sum_{j\geq1}\nu_p(m_{j}(\mu)!)$.
\ee
\enprop

\Proof
(\ref{nuestimateMa}) and (\ref{nuestimateMb}) follow immediately from the combinatorial descriptions in Remark~\ref{remark:M}. 
To prove (\ref{nuestimateMc}),
let $C$ be the set of maps $c\colon \{1,\ldots,\ell(\lambda)\}\to\PAR\setminus\{\varnothing\}$ such that
$\sum_{k=1}^{\ell(\lambda)}c(k)=\mu$ and $|c(k)|=\lambda_k$ for $1\leq k\leq \ell(\lambda)$.
For $c\in C$, we define $\mathcal{M}^c_{\lambda\mu}$ to be the set of maps $f\in \mathcal M_{\lambda\mu}$ such that, whenever $1\le k\le \ell(\lambda)$, 
there is a multiset equality
\[
 \{\mu_j\mid j\in f^{-1}(k)\}=\{c(k)_j\mid 1\leq j\leq \ell(c(k))\}.
\]

It is clear that $\mathcal{M}_{\lambda,\mu}=\bigsqcup_{c\in C}\mathcal{M}^c_{\lambda,\mu}$ (thus, we have $C\ne\emptyset$) and
\begin{align*}
\#\mathcal{M}^c_{\lambda,\mu}=\prod_{j\geq 1}\binom{m_{j}(\mu)}{m_{j}(c(1)),m_{j}(c(2)),\ldots,m_{j}(c(\ell(\lambda)))}.
\end{align*}

It suffices to prove that $\nu_p(\#\mathcal{M}^c_{\lambda,\mu})> \ell(\lambda)-\ell(\mu)+\sum_{j\geq0}\nu_p(m_{j}(\mu)!)$ for $c\in C$.
By Lemma \ref{nuestimate}, 
\begin{align*}
\nu_p(\#\mathcal{M}^c_{\lambda,\mu})-\nu_p(m_{j}(\mu)!)-\ell(\lambda)+\ell(\mu)
=\sum_{k=1}^{\ell(\lambda)}\left( \ell(c(k))-\sum_{j\geq1}\nu_p(m_{j}(c(k))!)-1 \right)\geq 0
\end{align*}
and the equality holds exactly when $\ell(c(k))=1$ for $1\leq k\leq \ell(\lambda)$, i.e., when $\lambda=\mu$.
\QED

\Lemma\label{nuestimate}
Let $p\geq 3$ be a prime and $\lambda\in\PAR\setminus\{\varnothing\}$.
We have $\ell(\lambda)-\sum_{j\geq 1}\nu_p(m_{j}(\lambda)!)\geq 1$, and
the equality holds exactly when $\ell(\lambda)=1$.
\enlemma

\Proof
Note that 
\begin{equation}\label{eq:factorial}
\nu_p(a!) = \sum_{i=1}^{\infty}\lfloor a/p^i \rfloor \leq \sum_{i=1}^{\infty}a/p^i=a/(p-1)
\end{equation} 
for $a\geq 0$.
Thus, 
\begin{align*}
\ell(\lambda)-\sum_{j\geq 1}\nu_p(m_{j}(\lambda)!)\geq \left(1-1/(p-1)\right)\ell(\lambda)>1
\end{align*}
when $\ell(\lambda)\geq 3$.
When $\ell(\lambda)=1,2$, we have $\nu_p(m_{j}(\lambda)!)=0$ for all $j\geq 1$.
\QED

\subsection{$p$-local version $N^{(p)}_{n}$ of $M_n$}\label{subsec:22}\label{SSSploc}

As in~\cite[\S4]{Evs}, we consider a submatrix $N_n^{(p)}$ of $M_n$ and use it to construct a certain block-diagonal matrix $L_n^{(p)}$, which is row equivalent over $\Z_{(p)}$ to $M_n$, for any fixed prime $p$.

\Def
For $p\in\PRIMES$ and $n\geq 0$, we define $N^{(p)}_{n}=M_n|_{\POW_p(n)\times \POW_p(n)}$ and 
\[
L^{(p)}_{n}=\bigoplus_{\nu\in\CPAR_p(n)}\bigotimes_{j\in\N\setminus p\Z}N^{(p)}_{m_j(\nu)}.
\]
\edf

We regard $L^{(p)}_{n}$ as an element of $\MAT_{\PAR(n)}(\Z)$ by using the following identification:
\bna
\item $\PAR(n)=\bigsqcup_{\nu\in\CPAR_p(n)}\PAR_p(n,\nu)$,
\item $\PAR_p(n,\nu)\isoto\prod_{j\in\N\setminus p\Z}\POW_p(m_j(\nu)), \; \lambda\MAPSTO (\lambda^{(j)})_{j\in\N\setminus p\Z}$
where $m_{p^s}(\lambda^{(j)})=m_{jp^s}(\lambda)$.
\ee

\Prop\label{reductiontoN}
Let $p\in\PRIMES$. For a $\Z_{(p)}$-algebra $\MAA$ and a family of homomorphisms $(r_j\colon R\to R)_{j\in\N\setminus p\Z}$, 
assume that
\bnum
\item\label{katei26i} there are maps $f,g\colon \PAR\to\MAA$ such that
\begin{align*}
f(\lambda)=\prod_{j\in\N\setminus p\Z}r_j(f(\lambda^{(j)})),\quad
g(\lambda)=\prod_{j\in\N\setminus p\Z}r_j(g(\lambda^{(j)}))
\end{align*}
for all $k\ge 0$, $\nu\in\CPAR_p(k)$ and $\lambda\in\PAR_p(k,\nu)$, 
where the assignment $\lambda\mapsto (\lambda^{(j)})_{j\in\N\setminus p\Z}$ is defined as above.
\item\label{katei26ii} for all $n\geq 0$, $M_n\DIAG(\{f(\lambda)\mid\lambda\in\PAR(n)\})M_{n}^{-1}$ is $\MAA$-valued.
\ee
Then, we have 
\bna
\item\label{ketsuron26a} $N^{(p)}_{k}\DIAG(\{f(\lambda)\mid\lambda\in\POW_p(k)\})\NPI{k}$ is $\MAA$-valued for all $k\geq 0$,
\item\label{ketsuron26b} 
For a $\Z_{(p)}$-algebra $\MAA'$ with a homomorphism $\phi\colon \MAA\to\MAA'$, the following implication holds:
\begin{align*}
{} &{} 
\forall k\geq 0,\phi(N^{(p)}_{k}\DIAG(\{f(\lambda)\mid\lambda\in\POW_p(k)\})\NPI{k}) \CONG{\MAA'} \phi(\DIAG(\{g(\lambda)\mid\lambda\in\POW_p(k)\}))\\
&\Longrightarrow
\forall n\geq 0,\phi(M_n\DIAG(\{f(\lambda)\mid\lambda\in\PAR(n)\})M_{n}^{-1}\CONG{\MAA'}\phi(\DIAG(\{g(\lambda)\mid\lambda\in\PAR(n)\})).
\end{align*}
\ee
\enprop

\Proof
By~\cite[Lemma 4.8]{Evs}, the matrices $M_n$ and $L^{(p)}_{n}$ are row equivalent over $\Z_{(p)}$ and hence over $R$. 
Thus, by (\ref{katei26ii}) we have
\begin{align*}
M_n\DIAG(\{f(\lambda)\mid\lambda\in\PAR(n)\})M_{n}^{-1}
\CONG{\MAA}
L^{(p)}_{n}\DIAG(\{f(\lambda)\mid\lambda\in\PAR(n)\})(L^{(p)}_{n})^{-1}.
\end{align*}
By (\ref{katei26i}), the right-hand side is just
\begin{align*}
\bigoplus_{\nu\in\CPAR_p(n)}\bigotimes_{j\in\N\setminus p\Z}N^{(p)}_{m_j(\nu)}\DIAG(\{r_j(f(\lambda^{(j)}))\mid\lambda^{(j)}\in\POW_p(m_{j}(\nu))\})\NPI{m_j(\nu)}.
\end{align*}
We have shown that $N^{(p)}_{n}\DIAG(\{f(\lambda)\mid\lambda\in\POW_p(n)\})\NPI{n}$
is a block submatrix of an $\MAA$-valued matrix which is unimodularly equivalent to $M_n\DIAG(\{f(\lambda)\mid\lambda\in\PAR(n)\})M_{n}^{-1}$
over $\MAA$ (note that, by (i), $r_1 (f(\lambda)) = f(\lambda)$ for $\lambda\in \POW_p(n)$). Thus, (\ref{ketsuron26a}) is proved.
Part (\ref{ketsuron26b}) follows from the above equivalences and hypothesis (i). 
\QED

Our next aim is to prove an integrality result (Proposition \ref{intp}), which will be used in \S\ref{CASEONE}.

\Def\label{apn}
Let $p\in\PRIMES$.
For a sequence $\theta=(\theta_j)_{j\geq 0}\in\Z_{(p)}^{\N}$ and $n\geq 0$, we define 
\[
a^{(p)}_\theta(n)=\sum_{\nu\in\POW_p(n)}\frac{1}{z_\nu}
\prod_{j\geq 0}\theta_j^{m_{p^j}(\nu)}.
\]
\edf

\Lemma\label{alemma}
Let $p\in\PRIMES$. For any $\theta\in\Z_{(p)}^{\N}$ and $n\geq 0$, we have
\bna
\item\label{alemmaa} $a_{\theta+\theta'}^{(p)}(n)=\sum_{k=0}^{n}a^{(p)}_{\theta}(k)a^{(p)}_{\theta'}(n-k)$, 
where $(\theta+\theta')_j:=\theta_j+\theta'_j$ for $j\geq 0$,
\item\label{alemmab} $a^{(p)}_\theta(n)\in\Z_{(p)}$ if $\nu_{p}(\theta_j)\geq j+1$ for all $j\geq 0$,
\item\label{alemmac} $a^{(p)}_{(\theta)}(n)\in\Z_{(p)}$ if there exist $s\in \Z_{\ge 1}$ and $c\in \Z_{(p)}$ such that 
$\theta_j = s c^{p^j}$ for all $j\ge 0$. 
\ee
\enlemma

\Proof
Consider the generating function $A_{\theta} = \sum_{n\ge 0} a_{\theta}^{(p)} (n) t^n$. By a straightforward calculation 
similar to the one in the proof of~\cite[Equation (I.2.14)]{Mac}, we obtain the identity
$A_{\theta} = \exp (\sum_{j\ge 0} p^{-j} \theta_j t^{p^j})$.
Hence,
$A_{\theta+\theta'} = A_{\theta}A_{\theta'}$, and part (\ref{alemmaa}) follows by equating coefficients in $t^n$. 
Part (\ref{alemmab}) follows from the identity
\begin{align*}
a^{(p)}_\theta(n)=\sum_{\nu\in\POW_p(n)}\prod_{j\geq 0}\frac{1}{m_{p^j}(\nu)!}\left(\frac{\theta_j}{p^j}\right)^{m_{p^j}(\nu)}.
\end{align*} and the
inequality $\nu_p(d!)\leq d$ (see~\eqref{eq:factorial}). 

To prove (\ref{alemmac}), we recall a corollary of Brauer's characterization of characters.
Let $G$ be a finite group.
Then the characteristic function 
of a $p'$-section $\SECTION_{p'}(x):=\{y\in G \mid y_{p'}\CONG{G}x\}$ 
of any $p'$-element $x\in G$
is an $\MO$-generalized character of $G$ (see ~\cite[Lemma 8.19]{Isa}) for a certain DVR $\MO$
with $\Z_{(p)}\subseteq\MO \subseteq \mathbb C$. In particular, 
the characteristic function of $\SECTION_{p'}(1_{\mathfrak{S}_n})=\bigsqcup_{\nu\in\POW_p(n)}C_{\nu}$
is an $\MO$-generalized character of $\mathfrak{S}_n$. 

We denote by $\langle \cdot , \cdot \rangle_G$ the usual inner product on the complex-valued class functions on $G$, so that 
 $\{\chi_V\mid V\in\IRR(\MOD{\C G})\}$ is an orthonormal basis. 
Due to (\ref{alemmaa}), we may assume that $s=1$, so that $\theta_j = c^{p^j}$ for all $j$. 
We have 
\[
a^{(p)}_{\theta}(n)=
\sum_{\nu\in \POW_p(n)} z_{\mu}^{-1} c^n = 
c^n \langle \chi_{\TRIV_{\mathfrak{S}_n}}|_{\bigsqcup_{\nu\in\POW_p(n)}C_{\nu}}
,\chi_{\TRIV_{\mathfrak{S}_n}}\rangle_{\mathfrak{S}_n} \in \Q \cap \MO = \Z_{(p)}. \qedhere
%
\]
\QED

\Prop\label{usegeneralizedcharacter}
Let $\MAA\subseteq\C$ be a ring, and consider a map $\xi\colon \PAR(n)\to\C$ be a map for some $n\geq 0$.
If the class function $\xi^{\mathsf{cl}}$ defined by $\xi^{\mathsf{cl}}(C_\lambda)=\xi(\lambda)$
for $\lambda\in\PAR(n)$ is an $\MAA$-generalized character of $\mathfrak{S}_n$,
then $M_{n}\DIAG(\{\xi(\lambda)\mid\lambda\in\PAR(n)\})M_{n}^{-1}$ is $\MAA$-valued.
\enprop

\Proof
Let $T_n=(\chi_{V}(C_\lambda))_{V\in\IRR(\MOD{\Q\mathfrak{S}_n}), \, \lambda\in\PAR(n)}$ be the character table of $\mathfrak{S}_n$.
Then, for $V,W\in\IRR(\MOD{\Q\mathfrak{S}_n})$, 
the $(V,W)$-entry of $T_n\DIAG(\{\xi(\lambda)\mid\lambda\in\PAR(n)\})T_n^{-1}$ is
equal to $\langle \xi^{\mathsf{cl}}\chi_V,\chi_W\rangle_{\mathfrak{S}_n}$. Indeed, we have 
\[
 \langle \xi^{\mathsf{cl}}\chi_V,\chi_W\rangle_{\mathfrak{S}_n} = 
\sum_{\lambda\in\PAR(n)} \frac{1}{z_{\lambda}} \chi_V (C_{\lambda}) \xi(\lambda) \chi_W (C_{\lambda}),
\]
and $z_{\lambda}^{-1} \chi_W (C_{\lambda})$ is the $(\lambda,W)$-entry of $T_n^{-1}$ due to the orthogonality relations. 
The result follows since $M_n$ and 
$T_n$ are row equivalent over $\Z$ (see Remark~\ref{remark:M2}).
\QED

\Cor\label{usegeneralizedcharacter2}
Let $p\in\PRIMES$ and $n\geq0$. For a map $\xi\colon \POW_p(n)\to\C$,
if the class function $\xi^{\mathsf{cl}}$ defined by
\begin{align*}
\xi^{\mathsf{cl}}(C_\lambda)=
\begin{cases}
\xi(\lambda) & \text{if }\lambda\in\POW_p(n), \\
0 & \text{if }\lambda\in\PAR(n)\setminus\POW_p(n)
\end{cases}
\end{align*}
is a  $\Z_{(p)}$-generalized character of $\mathfrak{S}_n$, then $N^{(p)}_{n}\DIAG(\{\xi(\lambda)\mid\lambda\in\POW_p(n)\})\NPI{n}$ is $\Z_{(p)}$-valued.
\encor

\Proof
Put $\HM{n}=\bigoplus_{\nu\in\CPAR_{p}(n)}M_n|_{\PAR_p(n,\nu)\times \PAR_p(n,\nu)}\in\MAT_{\PAR(n)}(\Z)$.
Then $M_n$ and $\HM{n}$ are row equivalent over $\Z_{(p)}$ by~\cite[Lemma 4.6]{Evs}. Thus, by Proposition~\ref{usegeneralizedcharacter},
$\HM{n}\DIAG(\{\xi^{\mathsf{cl}}(C_{\lambda})\mid\lambda\in\PAR(n)\})\HMI{n}\in\MAT_{\PAR(n)}(\Z_{(p)})$.
Now $N^{(p)}_{n}\DIAG(\{\xi(\lambda)\mid\lambda\in\POW_p(n)\})\NPI{n}$ is simply the $\POW_p (n) \times \POW_p(n)$-submatrix of this matrix, so 
the result follows. 
\QED

\Prop\label{intp}
Let $p\in\PRIMES$ and $n\geq 0$, $\ell\geq 2$ be integers. Put $r=\nu_p(\ell)$.
Then, for any $a/b\in \Z_{(p)}$ with $a,b\in\Z\setminus p\Z$ and $a^2-b^2\in p\Z$, we have
\begin{align*}
N^{(p)}_{n}\DIAG(\{p^{-r\ell(\lambda)}\prod_{j\geq 0}[\ell]^{m_{p^j}(\lambda)}_{p^{r+j}}|_{v=a/b}\mid\lambda\in\POW_p(n)\})\NPI{n}\in\MAT_{\POW_p(n)}(\Z_{(p)}).
\end{align*} 
\enprop

\Proof
Put $\theta=(\theta_j)_{j\geq 0}\in\Z_{(p)}^{\N}$ where $\theta_j=p^{-r}[\ell]_{p^{r+j}}|_{v=a/b}$.
Consider the map $\xi\colon \POW_p(n)\to\Q$ given by $\nu\mapsto \prod_{j\geq 0}\theta_j^{m_{p^j}(\nu)}$. 
By Corollary \ref{usegeneralizedcharacter2}, it is enough to show that $\xi^{\mathsf{cl}}$ is a $\Z_{(p)}$-generalized character of $\mathfrak{S}_n$.
By Frobenius reciprocity, for all $\lambda \in \PAR(n)$ we have
\begin{align*}
{\textstyle{
\langle \xi^{\mathsf{cl}},\chi_{\IND_{\mathfrak{S}_{\lambda}}^{\mathfrak{S}_n}\TRIV_{\mathfrak{S}_{\lambda}}}\rangle_{\mathfrak{S}_n}=
\langle \Res_{\mathfrak{S}_{\lambda}}^{\mathfrak{S}_n}\xi^{\mathsf{cl}},\chi_{\TRIV_{\mathfrak{S}_{\lambda}}}\rangle_{\mathfrak{S}_{\lambda}}=
\prod_{i=1}^{\ell(\lambda)}a_{\theta}^{(p)} (\lambda_i).}}
\end{align*}
Therefore, since $\{\chi_{\IND_{\mathfrak{S}_{\lambda}}^{\mathfrak{S}_n}\TRIV_{\mathfrak{S}_{\lambda}}}\mid \lambda\in\PAR(n)\}$ is a $\Z$-basis of
the abelian group of generalized characters of $\mathfrak{S}_n$, 
it suffices to show that $a_{\theta}^{(p)}(k)\in\Z_{(p)}$ for all $k\geq 0$.

Let $\theta''_j= \ell_{p'} (a/b)^{-(\ell-1)p^{r+j}}$ and $\theta'_j = \theta_j-\theta''_j$ for $j\ge 0$, so that $\theta=\theta'+\theta''$. 
We know that $a_{\theta''}^{(p)}(k) \in \Z_{(p)}$ by Lemma \ref{alemma} \eqref{alemmac}.
Thus, by Lemma \ref{alemma} \eqref{alemmaa}, it is enough to show that $a_{\theta'}^{(p)}(k) \in \Z_{(p)}$.
By Lemma~\ref{alemma} \eqref{alemmab}, it will suffice to prove that $\nu_p (\theta'_j) \ge j+1$.
Note that
\begin{align*}
\theta'_j =\frac{\sum_{i=0}^{\ell-1}(a/b)^{2ip^{r+j}}-\ell}{p^r}\cdot\left(\frac{a}{b}\right)^{-(\ell-1)p^{r+j}} 
= \frac{\sum_{i=0}^{\ell-1} \left( (a/b)^{2ip^{r+j}}-1 \right)}{p^r}\cdot\left(\frac{a}{b}\right)^{-(\ell-1)p^{r+j}}.
\end{align*}
Since the assumption that $a^2-b^2\in p\Z$ implies that $a^{2ip^{r+j}}-b^{2ip^{r+j}}\in p^{1+r+j}\Z$ for all $i\geq 0$ (see e.g.~Proposition~\ref{elementarynumbertheory} and its proof), we are done.
\QED

\subsection{$\ell$-colored version $M_{\ell,d}$ of $M_n$}\label{subsec:23}\label{SSSmult}

Let $R$ be a commutative ring and $A\in \MAT_{\ell} (R)$ for some $\ell\ge 1$. 
Let $\{ v_1,\ldots, v_{\ell} \}$  be 
the standard basis of the free $R$-module $R^{\ell}$. 
Then the symmetric power $\SYM^m (R^\ell)$ has a basis 
$\{ v_{i_1} v_{i_2} \cdots v_{i_m} \mid (i_1,\ldots,i_m) \in \MULT_m (\ell)\}$ where 
\[
 \MULT_m (\ell) = \{ (i_1,\ldots,i_m)\in \Z^m \mid 1\le i_1\le \cdots \le i_m \le \ell \}. 
\]
Since $\SYM^m$ is a functor from the category of finitely generated $R$-modules to itself, 
the endomorphism of $R^\ell$ given by $A$ induces an endomorphism of $\SYM^m (R^\ell)$, and the $m$-th symmetric power
$\SYM^m (A)$ is defined to be the matrix of this endomorphism with respect to the given basis (see e.g.~\cite[Equation (3.15)]{Evs} for a more explicit description). 
Thus, $\SYM^m (A) \in \MAT_{\MULT_{m} (\ell)} (R)$. 

For $\ell, d\ge 0$, we define
\begin{align*}
\Omega_{\ell,d} = \bigsqcup_{\lambda\in\PAR(d)}\{(\lambda,(i_1,\ldots,i_{\ell(\lambda)}))\mid 
1\le i_j \le \ell \;\, \forall j \text{ and }
 \lambda_j=\lambda_{j+1}\Rightarrow i_j\le i_{j+1}\}.
\end{align*}
There is a bijection $\Omega_{\ell,d} \isoto \PAR_{\ell} (d)$
 given by $(\lambda, \ul  i) \mapsto (\lambda^{(1)}, \ldots, \lambda^{(\ell)})$ where
$\lambda^{(j)}$ consists of the parts $\lambda_k$ such that $i_k = j$ (see~\cite[Notation 3.1]{Hil}). 


\Def\label{def:Sd}
For positive integers $\ell,d$ and $A\in \MAT_{\ell}(\MA)$, we define (see \S\ref{subsubsec:qi})
\begin{align*}
\HAT{A}{d} = \bigoplus_{\lambda\in\PAR(d)}\bigotimes_{t\geq1}\SYM^{m_t(\lambda)}(\INFL_t(A)). 
\end{align*}
We may view $\HAT{A}{d}$ as an $\Omega_{\ell,d} \times \Omega_{\ell,d}$-matrix via the identification
\begin{align*}
 \bigsqcup_{\lambda \in\PAR(d)} \prod_{t\ge 1} \MULT_{m_t (\lambda)} (\ell) &\isoto \Omega_{\ell,d}, \\
 \big( (i_{t,1},\ldots, i_{t, m_t (\lambda)} ) \big)_{t\ge 1} &\mapsto (\lambda, (i_{1,1}, i_{1,2},\ldots, i_{1,m_1 (\lambda)}, i_{2,1}, i_{2,2},\ldots, i_{2, m_2(\lambda)}, \ldots )).
\end{align*}
Further, combining this with the above identification, we may (and do) view $\HAT{A}{d}$ as an element of $\MAT_{\PAR_{\ell} (d)} (\MA)$. 
\edf

\Def\label{MK}
The $\ell$-colored ring of symmetric functions is defined by 
$\HATT{\Lambda}{\ell}=\bigotimes_{t=1}^{\ell}\Lambda^{(t)}$, where each $\Lambda^{(t)}$ is a copy of $\Lambda$. We write $m_{\mu}^{(t)}$ for the image of 
$m_{\mu}$ in $\Lambda^{(t)}$ and adopt a similar convention for the functions $h_{\mu}$ and $p_{\mu}$. 
For $\ell\geq 1$ and $d\geq 0$, we define the matrices $\HATT{M}{\ell,d},\HATT{K}{\ell,d}\in\MAT_{\PAR_{\ell}(d)}(\Z)$ by the following equations:
\begin{align*}
p^{(1)}_{\lambda^{(1)}}\cdots p^{(\ell)}_{\lambda^{(\ell)}}
&= \sum_{(\mu^{(1)},\ldots,\mu^{(\ell)})\in\PAR_{\ell}(d)}
(\HATT{M}{\ell,d})_{(\lambda^{(1)},\ldots,\lambda^{(\ell)}),(\mu^{(1)},\ldots,\mu^{(\ell)})}m^{(1)}_{\mu^{(1)}}\cdots m^{(\ell)}_{\mu^{(\ell)}} \\
&= \sum_{(\mu^{(1)},\ldots,\mu^{(\ell)})\in\PAR_{\ell}(d)}
(\HATT{K}{\ell,d})_{(\lambda^{(1)},\ldots,\lambda^{(\ell)}),(\mu^{(1)},\ldots,\mu^{(\ell)})}h^{(1)}_{\mu^{(1)}}\cdots h^{(\ell)}_{\mu^{(\ell)}}.
\end{align*}
\edf

\begin{remark}\label{rem:Mell}
$M_{1,n} = \TRANS{M_n}$ by~\eqref{hp} and $
 M_{\ell,n} = {\textstyle{\bigoplus_{\substack{\sum_{i=1}^{\ell}n_i=n, n_i\geq 0}} \bigotimes_{i=1}^{\ell}}} M_{1,n_i}$.
\end{remark}

\begin{remark}\label{rem:MK}
$\HATT{M}{\ell,d}$ and $\HATT{K}{\ell,d}$ are column equivalent over $\Z$
since both of 
\begin{align*}
\textstyle
\{\prod_{i\in I}m^{(i)}_{\mu^{(i)}}\mid (\mu^{(i)})_{i\in I}\in\PAR_{\ell}(d)\},\quad
\{\prod_{i\in I}h^{(i)}_{\mu^{(i)}}\mid (\mu^{(i)})_{i\in I}\in\PAR_{\ell}(d)\}
\end{align*}
are bases of the same $\MA$-lattice of the degree $d$ part of $\HATT{\Lambda}{\ell}$ (see ~\cite[\S6, Proposition 1]{Ful}).
\end{remark}


The following result is similar to~\cite[Proposition 2.3]{Tsu} and is proved by essentially the same argument as that given in~\cite[\S5]{BK1}. We include a proof for clarity.

\Prop\label{comp} Let $\corr$ 
be a field of characteristic 0 and $I=\{ 1,\ldots, \ell\}$ for a fixed integer $\ell \ge 0$. 
We regard the polynomial ring $V=\corr[y_n^{(i)}\mid i\in I,n\geq 1]$ as a graded $\corr$-algebra via $\DEG y_n^{(i)}=n$ and
denote by $V_d$ the $\corr$-vector subspace of $V$ consisting of homogeneous elements of degree $d$ for $d\geq 0$.
Assume that 
we are given the following data:
\bna
\item\label{compa} a ring involution $\BARR\colon\corr\isoto\corr$,
\item a family of invertible matrices $A=(A^{(m)})_{m\geq 1}$ where $A^{(m)}=(a^{(m)}_{ij})_{i,j\in I}\in\GL_I(\corr)$,
\item two bi-additive forms $\langle \cdot , \cdot \rangle_S$ and $\langle \cdot, \cdot \rangle_K \colon V\times V\to \corr$ 
such that
\begin{itemize}
\item $\langle cf,g\rangle_X=\BARR(c)\langle f,g\rangle_X$ and $\langle f,cg\rangle_X=c\langle f,g\rangle_X$, 
\item $\langle 1,1\rangle_X=1$, and $\langle 1,f\rangle_X=0$ if $f\in V_d$ for some $d>0$,
\item 
$\langle my_m^{(i)}f,g\rangle_S=\langle f,\sum_{j\in I}a_{ij}^{(m)}\frac{\partial g}{\partial y^{(j)}_m}\rangle_S$ and 
$\langle my_m^{(i)}f,g\rangle_K=\langle f,\frac{\partial g}{\partial y^{(i)}_m}\rangle_K$ 
\end{itemize}
for $X\in\{S,K\}$ and $f,g\in V$, $c\in\mathbb F$, $m\geq1$, 
\item\label{lin_change} a family of new variables $(x_n^{(i)})_{i\in I,n\geq 1}$
such 
that $x_n^{(i)}-y_n^{(i)}\in \corr[y_m^{(i)}\mid 1\leq m<n]\cap V_n$ for all $n\ge 1$. 
\ee
Set $x^{(\ul  i)}_{\lambda}=\prod_{k=1}^{\ell(\lambda)}x^{(i_k)}_{\lambda_k}$ and $y^{(\ul  i)}_{\lambda}=\prod_{k=1}^{\ell(\lambda)}y^{(i_k)}_{\lambda_k}$ for $(\lambda,\ul  i)\in\Omega_{\ell,d}$, 
and define the transition matrix $P = (p_{\lambda,\mu}^{(\ul  i, \ul  j)})\in \GL_{\Omega_{\ell,d}}(\corr)$ by 
$x_{\lambda}^{(\ul  i)}={\textstyle\sum_{(\mu,\ul  j)\in\Omega_{\ell,d}} p^{(\ul  i,\ul  j)}_{\lambda,\mu}y^{(j)}_{\mu}}$.
Then the Gram matrices 
$M_S=(\langle x^{(\vph{\ul j} \ul  i)}_{\lambda},x^{(\ul  j)}_{\mu}\rangle_S)_{(\lambda,\ul i),(\mu,\ul j)\in\Omega_{\ell,d}}$ and 
$M_K=(\langle x^{(\vph{\ul j} \ul  i)}_{\lambda},x^{(\ul  j)}_{\mu}\rangle_K)_{(\lambda,\ul i),(\mu,\ul j)\in\Omega_{\ell,d}}$ are related by the identity
\begin{align}\label{iden_tity}
M_S = \BARR(P)\left({{\bigoplus_{\lambda\in\PAR(d)}\bigotimes_{t\geq 1}\SYM^{m_t(\lambda)}(A^{(t)})}}\right)\BARR(P)^{-1}M_K.
\end{align}
\enprop

\begin{proof}
Let $z_n^{(i)} = \sum_{j\in I} \sigma(a_{ij}^{(n)}) y_n^{(j)}$ for $n>0$ and $i\in I$, and define $z_{\lambda}^{(\ul i)} = \prod_{k=1}^{\ell (\lambda)} z_{\lambda_k}^{(i_k)}$ for all $(\lambda,\ul i) \in \Omega_{\ell,d}$. 
First, we will prove by induction on $d$ that, for all for all $f\in V$ and $(\lambda,\ul i)\in \Omega_{\ell,d}$, we have 
\begin{equation}\label{compclaim}
\langle y_{\lambda}^{(\ul i)}, f \rangle_S = \langle z_{\lambda}^{(\ul i)}, f \rangle_K
\end{equation} 
(cf.~\cite[Lemma 5.2]{BK1}).
We have $\langle 1, f\rangle_S = \langle 1,f \rangle_K$ for all $f\in V$, as both sides are equal to the constant term of $f$, so~\eqref{compclaim} holds when $d=0$. 
If~\eqref{compclaim} holds for some $(\lambda,\ul i)\in \bigcup_{d\ge 0} \Omega_{\ell,d}$ and all $f\in V$, 
then for all $n>0$, $i\in I$ and $f\in V$ we have 
\[
\begin{array}{lclcl}
\left\langle y_n^{(i)} y_{\lambda}^{(\ul i)}, f \right\rangle_{\! S} 
&=&
\left\langle n^{-1} y_{\lambda}^{(\ul i)}, \sum_{j\in I} a_{ij}^{(n)} \frac{\partial f} {\partial y_n^{(j)}} \right\rangle_{\! S} 
&=&
\left\langle n^{-1} z_{\lambda}^{(\ul i)}, \sum_{j\in I} a_{ij}^{(n)} \frac{\partial f}{\partial y_n^{(j)}}  \right\rangle_{\! K}  \\
&=& 
\sum_{j\in I} a_{ij}^{(n)} \left\langle n^{-1} z_{\lambda}^{(\ul i)}, \frac{\partial f}{\partial y_n^{(j)}} \right\rangle_{\! K} 
&=& 
\sum_{j\in I} a_{ij}^{(n)} 
\left\langle y_n^{(i)} z_{\lambda}^{(\ul i)}, f \right\rangle_{\! K} \\
&=&
\left\langle \sum_{j\in I} \sigma(a_{ij}^{(n)}) y_n^{(i)} z_{\lambda}^{(\ul i)}, f \right\rangle_{\! K} 
&=& 
\left\langle z_n^{(i)} z_{\lambda}^{(\ul i)}, f \right\rangle_{\! K},
\end{array}
\]
and therefore~\eqref{compclaim} holds in all cases.

Let $Q = (q_{\lambda,\mu}^{(\ul i, \ul j)})_{(\lambda,\ul i), (\mu, \ul j)\in \Omega_{\ell,d}}\in \MAT_{\Omega_{\ell,d}} (\F)$ be the transition matrix defined by 
\begin{align*}
z_{\lambda}^{(\ul i)} = \sum_{(\mu, \ul j)\in \Omega_{\ell,d}} q_{\lambda,\mu}^{(\ul i,\ul j)} y_{\mu}^{(\ul j)}. 
\end{align*}
For any $(\lambda, \ul i) \in \Omega_{\ell,d}$, we have 
\begin{align*}
z_{\lambda}^{(\ul i)} &= \sum_{\ul j \in I^{\ell (\lambda)}} \sigma(a_{i_1,j_1}^{(\lambda_1)}) \cdots \sigma(a_{i_{\ell(\lambda)}, j_{\ell(\lambda)}}^{(\lambda_{\ell(\lambda)})}) 
y_{\lambda_1}^{(j_1)} \cdots y_{\lambda_{\ell (\lambda)}}^{(j_{\ell(\lambda)})}, 
\qquad \text{whence} \\ 
Q &= \bigoplus_{\lambda\in \PAR(d)} \bigotimes_{t\ge 1} \SYM^{m_t (\lambda)} (\sigma(A^{(t)}))
\end{align*}
(cf.~\cite[Lemma 5.3]{BK1}).
Writing $P^{-1} = (\tilde{p}_{\lambda,\mu}^{(\ul i, \ul j)})_{(\lambda,\ul i), (\mu, \ul j)\in \Omega_{\ell,d}}$, we have 
\begin{align*}
\left\langle x_{\lambda}^{(\vph{\ul j} \ul i)}, x_{\mu}^{(\ul j)} \right\rangle_{\! S} 
&=
\sum_{(\nu, \ul k)\in \Omega_{\ell,d}}  \sigma(p_{\lambda,\nu}^{(\ul i, \ul k)}) \left\langle  y_{\nu}^{(\vph{\ul j} \ul k)}, x_{\mu}^{(\ul j)} \right\rangle_{\! S} \\
&=
\sum_{(\nu, \ul k)\in \Omega_{\ell,d}}  \sigma(p_{\lambda,\nu}^{(\ul i, \ul k)}) \left\langle z_{\nu}^{(\vph{\ul j} \ul k)}, x_{\mu}^{(\ul j)} \right\rangle_{\! K} \\
&=
\sum_{(\nu, \ul k), (\eta, \ul r)\in \Omega_{\ell,d}} \sigma(p_{\lambda,\nu}^{(\ul i, \ul k)}) \sigma(q_{\nu, \eta}^{(\ul k, \ul r)}) 
\left\langle y_{\eta}^{(\vph{\ul j} \ul r)}, x_{\mu}^{(\ul j)} \right\rangle_{\! K}   \\
&=
\sum_{(\nu, \ul k), (\eta, \ul r), (\theta, \ul s)\in \Omega_{\ell,d}} 
\sigma(p_{\lambda,\nu}^{(\ul i, \ul k)}) \sigma(q_{\nu, \eta}^{(\ul k, \ul r)}) \sigma(\tilde{p}_{\eta, \theta}^{(\ul r, \ul s)})
\left\langle x_{\theta}^{(\vph{\ul j} \ul s)}, x_{\mu}^{(\ul j)} \right\rangle_{\! K} 
\end{align*}
for any 
$(\lambda,\ul i), (\mu, \ul j)\in \Omega_{\ell,d}$, where the second equality holds by~\eqref{compclaim}. 
Therefore,
\[
 M_S = \sigma(P) \sigma(Q) \sigma(P)^{-1} M_K = \sigma(P) \left( \bigoplus_{\lambda\in \PAR(d)} \bigotimes_{t\ge 1} \SYM^{m_t (\lambda)} (A^{(t)})
\right) \sigma(P)^{-1} M_K. \qedhere
\] 
\end{proof}

The following is a corollary of the boson-fermion correspondence over $\Z$ (see ~\cite[Corollary 2.1]{DcKK} 
and ~\cite[Proposition 2.4]{Tsu}).

\Prop\label{existsK}
Let $\corr$, $\ell$, $I$, $\sigma$, $V$ and $V_d$ be as in Proposition~\ref{comp}.  
\bna
\item\label{existsKa} There
exists a unique bi-additive non-degenerate map $\langle \cdot, \cdot \rangle_K \colon V\times V\to\corr$ such that
\bnum
\item $\langle af,g\rangle_K=\BARR(a)\langle f,g\rangle_K, \langle f,ag\rangle_K=a\langle f,g\rangle_K$ 
and $\langle f,g\rangle_K=\BARR(\langle g,f\rangle_K)$, 
\item $\langle 1,1\rangle_K=1$ and 
$\langle my_m^{(i)} f,g\rangle_K=\langle f,\frac{\partial g}{\partial y_m^{(i)}}\rangle_K$, 
\ee
for all $f,g\in V$, $a\in \corr$ and $i\in I$.
\item\label{existsKb}
Suppose further that
for each $1\le i\le \ell$
the variables $\{x_n^{(i)}\mid n\geq 1\}$ and $\{y_n^{(i)}\mid n \geq 1\}$
are related by the formal identity 
\begin{align}\label{eq:exp}
1+\sum_{n\ge 1} x_n^{(i)} t^n = \exp \left( \sum_{r\ge 1} y_r^{(i)} t^r \right).
\end{align}
Then, for any $d\geq 0$, the set of Schur functions 
\begin{align*}
\left\{\prod_{i\in I}s_{\lambda^{(i)}}(x^{(i)})\quad\!\!\!\!\big|\quad\!\!\!\!\sum_{i\in I}|\lambda^{(i)}|=d\right\}
\end{align*}
forms an orthonormal basis of the $\Z$-lattice 
$\Z[x_n^{(i)}\mid i\in I,n\geq 1]\cap V_d$ of $V_d$ with respect to 
$\langle \cdot, \cdot \rangle_K$.
Here, $s_{\lambda}(x^{(i)}):=\det(x^{(i)}_{\lambda_k+j-k})_{1\leq j,k\leq |\lambda|}$ for $\lambda\in\PAR$ and
$x^{(i)}_{m}=\delta_{m,0}$ for $m\leq 0$.
\ee
\enprop

Note that the form $\langle \cdot, \cdot\rangle_K \colon V\times V \to \corr$ 
satisfying the conditions of Proposition~\ref{comp} is clearly unique. Also, those conditions are implied by the properties satisfied by the form 
$\langle \cdot, \cdot\rangle_K$ of Proposition~\ref{existsK} (\ref{existsKa}).



\Cor\label{lem:exp}
Assume all the hypotheses of Proposition~\ref{comp}. 
Suppose further that 
the variables $\{x_n^{(i)}\mid i\in I,n\geq 1\}$ and $\{y_n^{(i)}\mid i\in I,n\geq 1\}$ are related as in Proposition \ref{existsK} \eqref{existsKb}.
Then 
\begin{align}
 M_S = K_{\ell,d}^{-1} \left({{\bigoplus_{\lambda\in\PAR(d)}\bigotimes_{t\geq 1}\SYM^{m_t(\lambda)}(A^{(t)})}}\right)K_{\ell,d}M_K.
\label{resms}
\end{align}
and $M_K \in \GL_{\PAR_{\ell} (d)} (\Z)$.
\encor

\begin{proof}
Let $Y=\bigoplus_{\lambda\in\PAR(d)}\bigotimes_{t\geq 1}\SYM^{m_t(\lambda)}(A^{(t)})$.
We identify the ring $V$ with $\corr\otimes\Lambda_{\ell}$ by setting $y_n^{(i)} = p_n^{(i)}/n$. 
Then, comparing the hypothesis with~\eqref{hphenkan}, we see that $x_n^{(i)} = h_n^{(i)}$. 
Define $w_{\mu} = \mu_1 \cdots \mu_{\ell(\mu)}$ for $\mu \in \PAR$ and $i\in I$, and let 
$W = \DIAG \{ w_{\mu^{(1)}} \cdots w_{\mu^{(\ell)}} \mid  (\mu^{(1)},\ldots,\mu^{(\ell)} ) \in\PAR_{\ell} (d) \}$.
It follows from Definition~\ref{MK} that the change-of-basis matrix $P$ of Proposition~\ref{comp} is given by 
$P = K_{\ell,d}^{-1} W$.
Hence, Proposition~\ref{comp} yields
\begin{align*}
M_S = \sigma(K_{\ell,d})^{-1}\sigma (W)Y\sigma(W)^{-1}\sigma(K_{\ell,d})M_K.
\end{align*}
Observing that, when we view $W$ as an $\Omega_{\ell,d}\times\Omega_{\ell,d}$-matrix, each block of $W$ corresponding to a fixed $\lambda\in \PAR(d)$ is a scalar matrix and also that $\sigma (K_{\ell,d}) = K_{\ell,d}$ because $K_{\ell,d}$ is $\Q$-valued, we obtain \eqref{resms}.

Thanks to Proposition ~\ref{existsK}, 
there exists $Q\in \GL_{\PAR_{\ell} (d)} (\Z)$ such that $M_K=\TRANS{Q}\cdot Q$.
\end{proof}

The following result is a quantized version of~\cite[Proposition 3.3]{Hil}, though our proof is different.

\Th\label{inte}
For $\ell\geq 1$ and $A\in\MAT_{\ell}(\MA)$, we have $\HATTT{M}{\ell,d}{-1}\HAT{A}{d}\HATT{M}{\ell,d}\in\MAT_{\PAR_{\ell}(d)}(\MA)$ for any $d\geq 0$.
\enth

\Proof
Let $I=\{1,\ldots,\ell\}$.
By Remark \ref{rem:MK},
it will suffice to prove that $\HATTT{K}{\ell,d}{-1}\HAT{P}{d}\HATT{K}{\ell,d}\in\MAT_{\PAR_{\ell}(d)}(\MA)$.

In the rest of the proof, we identify $\cor\otimes\HATT{\Lambda}{\ell}$ with $V=\cor[y_n^{(i)}\mid i\in I,n\geq 1]$
by identifying $p^{(i)}_n/n$ with $y_n^{(i)}$. Write $A=(a_{ij})_{i,j\in I}$.
Define new variables $x_n^{(i)} \in V$ by the identity~\eqref{eq:exp}. 
Clearly, there exists a unique bi-additive map 
$\langle \cdot, \cdot \rangle_S\colon V\times V\to \cor$ such that
\bna
\item $\langle cf,g\rangle_S=\BAR(c)\langle f,g\rangle_S$, $\langle f,cg\rangle_S=c\langle f,g\rangle_S$,
\item \label{innerprod1} $\langle 1,1\rangle_S=1$, and $\langle 1, f\rangle_S = 0$ if $f$ has zero constant term as a polynomial in the variables $y_n^{(j)}$,
\item\label{innerprod2}
$\langle my_m^{(i)}f,g\rangle_S=\langle f,\sum_{j\in I}\INFL_{m}(a_{ij})\frac{\partial g}{\partial y^{(j)}_m}\rangle_S$
\ee
for $f,g\in V$, $c\in\cor$, $m\geq 1$, $i\in I$. 
Applying Corollary~\ref{lem:exp} with $\F = \cor$, $\sigma = \BAR{}$ and the form $\langle \cdot, \cdot \rangle_K$ supplied by 
Proposition \ref{existsK} (\ref{existsKa}), 
we obtain  
$M_S=\HATTT{K}{\ell,d}{-1}\HAT{A}{d}\HATT{K}{\ell,d} M_K$ (in the notation of Proposition~\ref{comp}) and $M_K\in \GL_{\PAR_{\ell} (d)} (\Z)$.

Thus, it is enough to show that $\langle x^{(\vph{\ul j} \ul  i)}_{\lambda},x^{(\ul  j)}_{\mu}\rangle_S\in\MA$ for $(\lambda,\ul  i),(\mu,\ul  j)\in\Omega_{\ell,d}$,
where $x_{\lambda}^{(\ul  i)}$ is defined as in Proposition~\ref{comp}. We argue by induction on 
$|\lambda|$.
Expanding~\eqref{eq:exp}, we obtain
\begin{equation}\label{eq:xexpansion}
{\textstyle{
x^{(i)}_n=\sum_{\lambda\in\PAR(n)}\prod_{k\geq 1}\frac{(y^{(i)}_k)^{m_k(\lambda)}}{m_k(\lambda)!}}},
\end{equation}
and therefore $\partial x^{(i)}_{m}/\partial y^{(j)}_{n}=\delta_{ij}x^{(i)}_{m-n}$
for $i,j\in I$ and $m,n\geq 1$,
where we put $x^{(i)}_{\gamma}=\delta_{\gamma,0}$ 
for $\gamma\leq 0$ (see also ~\cite[page 129]{DcKK}).
Combining~\eqref{eq:xexpansion} with the defining property~\eqref{innerprod2} of $\langle \cdot, \cdot \rangle_S$, we obtain the identity 
$\langle x_{n}^{(i)} f, g \rangle_S = \langle f, D^{(i)}_n g\rangle_S$ for all $f,g\in V$, $n\ge 1$, $i\in I$, 
where the 
differential operator $D^{(i)}_n \colon V \to V$ is defined by 
\[
 D^{(i)}_n = 
 \sum_{\lambda\in\PAR(n)}\prod_{k\geq 1} 
 \frac{1}{k^{m_k(\lambda)}m_k(\lambda)!}\left( \sum_{j\in I} \INFL_k (a_{ij}) \frac{\partial}{\partial y^{(j)}_k}\right)^{m_k(\lambda)}
\]
Let $V^{\MA} = \MA[x^{(\ul i)}_{\lambda} \mid (\lambda, \ul i) \in \Omega_{\ell,d}]$.
By the inductive hypothesis, it is enough to show that $D^{(i)}_n (V^{\MA}) \subset V^{\MA}$ for all $i\in I$, $n\ge 1$. 
By a straightforward calculation, one obtains the product rule $D^{(i)}_n (fg) = \sum_{s=0}^n D_s^{(i)}(f) D_{n-s}^{(i)}(g)$ for $f,g\in V$. 
Hence, it suffices to prove that
$D^{(i)}_n (x_m^{(j)}) \in V^{\MA}$ for all $i,j\in I$ and $n,m\ge 1$. 
We have  
\[
D^{(i)}_n (x_m^{(j)}) = \left(
\sum_{\lambda\in\PAR(n)}\prod_{k\geq 1}\frac{\INFL_k(a_{ij})^{m_k(\lambda)}}{k^{m_k(\lambda)}m_k(\lambda)!}\right)x_{m-n}^{(j)},
\]
and the result now follows from Lemma \ref{intinfl}.
\QED

\Lemma\label{intinfl}
For any $f\in\MA$, we have $\sum_{\lambda\in\PAR(n)}\frac{1}{z_{\lambda}}\prod_{k\geq1}\INFL_k(f)^{m_k(\lambda)}\in\MA$.
\enlemma

\Proof
For $\theta=(\theta_k)_{k\geq 1}\in\MA^{\Z_{\geq 1}}$ and $n\geq 0$, we define 
$b_\theta(n)=\sum_{\lambda\in\PAR(n)}\frac{1}{z_{\lambda}}\prod_{k\geq 1}\theta_k^{m_k(\lambda)}$ (cf.~Definition \ref{apn}).
Similarly to Lemma \ref{alemma} (\ref{alemmaa}), we have $b_{\theta+\theta'}(n)=\sum_{k=0}^{n}b_{\theta}(k)b_{\theta'}(n-k)$.
Thus, it is enough to show that
$b_{\theta^{\pm}_{m}}(n)\in\MA$
for $m\in\Z$ where $\theta^{\pm}_{m}=(\pm v^m,\pm v^{2m},\pm v^{3m},\ldots)$.
By the orthogonality relations,
we have $\sum_{\lambda\in\PAR(n)}\frac{(\pm 1)^{\ell(\lambda)}}{z_{\lambda}}=(1\pm1)/2$, which implies that 
$b_{\theta^{\pm}_{m}}(n)=(v^{mn}\pm v^{mn})/2$.
\QED

\section{Graded Cartan matrices of symmetric groups and Hecke algebras}\label{sec:3}

In this section we recall the definition of graded Cartan matrices
$C_{\MH_n(\corrr{\ell};\qq{\ell})}^v$ and reduce the problem of finding their
unimodular equivalence classes
to the same problem for the matrix $M_n\DIAG(\{J^{v}_{\ell}(\lambda)\mid\lambda\in\PAR(n)\})M_n^{-1}$ (cf.~Conjecture~\ref{ourgradedconjecture}).

\subsection{Gram matrices of quantized Shapovalov forms}\label{pretsu2}
We now recall some of the definitions and results from~\cite{Tsu} and, in particular, 
define the Gram matrix $\QSHM_{\lambda,\mu}(X)$ of a quantized Shapovalov form (cf.~\cite[Definition 3.13]{Tsu}). 
For the theory of quantum groups, the book ~\cite{Lus} is a standard reference.

Let $X=(a_{ij})_{i,j\in I}$ be a symmetrizable {\GCM} and take the symmetrization $d=(d_i)_{i\in I}$ of $X$, i.e.,
the unique $d\in\NNN^I$ such that $d_ia_{ij}=d_ja_{ji}$ for all $i,j\in I$ and
$\gcd(d_i)_{i\in I}=1$.
We consider a root datum $(\MP,\MPC,\Pi,\Pi^\vee)$ in the following sense: 
\bna
\item $\MPC$ is a free $\Z$-module of rank $(2|I|-\Rank X)$ and $\MP=\Hom_\Z(\MPC,\Z)$,
\item $\Pi^\vee=\{h_i\mid i\in I\}$ is a $\Z$-linearly independent subset of $\MPC$,
\item $\Pi=\{\alpha_i\mid i\in I\}$ is a $\Z$-linearly independent subset of $\MP$,
\item $\alpha_j(h_i)=a_{ij}$ for all $i,j\in I$.
\ee

We denote by $Q^+=\bigoplus_{i\in I}\Z_{\geq 0}\alpha_i$ the positive part of the root lattice and
denote by $\MP^+$
the set of dominant integral weights $\{\lambda\in\MP\mid \forall i\in I,\lambda(h_i)\in\Z_{\geq 0}\}$.
For each $i\in I$, $\Lambda_i\in\MP^+$ is a dominant integral weight 
determined modulo the subgroup $\{\lambda\in\MP\mid \forall i\in I,\lambda(h_i)=0\}$ of $\MP$ 
by the condition that $\Lambda_i(h_j)=\delta_{ij}$ for all $j\in I$.

Recall that
the Weyl group $W=W(X)$ is the subgroup of $\AUT(\MP)$ generated by $\{s_i:\MP\isoto\MP,\lambda\MAPSTO\lambda-\lambda(h_i)\alpha_i\mid i\in I\}$.

\Def 
The quantum group $U_v=U_v(X)$ is the unital associative $\cor$-algebra generated by 
$\{e_i,f_i\mid i\in I\}\cup\{v^h\mid h\in\MPC\}$ with the following defining relations:
\bna
\item $v^0=1$ and $v^hv^{h'}=v^{h+h'}$ for any $h,h'\in\MPC$,
\item $v^{h}e_iv^{-h}=v^{\alpha_i(h)}e_i,v^{h}f_iv^{-h}=v^{-\alpha_i(h)}f_i$ for any $i\in I$ and $h\in\MPC$,
\item $e_if_j-f_je_i=\delta_{ij}(K_i-K_i^{-1})/(v_i-v_i^{-1})$ for any $i,j\in I$,
\item\label{qSerre1} $\sum_{k=0}^{1-a_{ij}}(-1)^k e_i^{(k)}e_je_i^{(1-a_{ij}-k)}=0=\sum_{k=0}^{1-a_{ij}}(-1)^k f_i^{(k)}f_jf_i^{(1-a_{ij}-k)}$
for any $i\ne j\in I$,
\ee
where $K_i=v^{d_ih_i}, v_i=v^{d_i}$ and $e_i^{(n)}=e_i^n/[n]_{d_i}!,f_i^{(n)}=f_i^n/[n]_{d_i}!$.
\edf

Let $U_v^{+},U_v^{0},U_v^{-}$ be the $\cor$-subalgebras of $U_v$ defined by
\begin{align*}
U_v^+=\langle e_i\mid i\in I\rangle,\quad
U_v^-=\langle f_i\mid i\in I\rangle,\quad
U_v^0=\langle v^h\mid h\in\MPC\rangle.
\end{align*}
Then, the following is a triangular decomposition theorem for quantum groups~\cite[\S3.2]{Lus}:
\bnum
\item\label{QPBWa} the canonical map
$U^{-}_v\otimes_{\cor}U^{0}_v\otimes_{\cor}U^{+}_v\to U_v$
is a $\cor$-vector space isomorphism,
\item\label{QPBWb}
$U^{0}_v$ is canonically isomorphic to the group $\cor$-algebra $\cor[\MPC]$.
\ee

For each $\lambda\in \MP^+$, we denote by $V(\lambda)$
the integrable highest weight $U_v$-module with highest weight $\lambda$ and a fixed highest weight vector $1_{\lambda}\in V(\lambda)$.



\Prop[{\cite[Proposition 3.8]{Tsu}}]\label{tsu38}
For $\lambda\in\MP^+$,
there exist unique bi-additive non-degenerate maps $\langle\cdot,\cdot\rangle_{\QSH}:V(\lambda)\times V(\lambda)\to \cor$ and
$\langle\cdot,\cdot\rangle_{\RSH}:V(\lambda)\times V(\lambda)\to \cor$ with 
\bnum
\item\label{tsu38i} $\langle aw_1,w_2\rangle_{Y}=\BAR(a)\langle w_1,w_2\rangle_{Y}$,
$\langle w_1,aw_2\rangle_{Y}=a\langle w_1,w_2\rangle_{Y}$ and 
$\langle w_1,w_2\rangle_{Y}=\BAR(\langle w_2,w_1\rangle_{Y})$,
\item $\langle 1_{\lambda},1_{\lambda}\rangle_{Y}=1$ and 
$\langle uw_1,w_2\rangle_{\QSH}=\langle w_1,\Omega(u)w_2\rangle_{\QSH}$, $\langle uw_1,w_2\rangle_{\QSH}=\langle w_1,\Upsilon(u)w_2\rangle_{\RSH}$.
\ee
for all $Y\in\{\QSH,\RSH\}$ and
for all $w_1,w_2\in V(\lambda)$, $u\in U_v$ and $a\in\cor$.
Here, $\Omega$ and $\Upsilon$ are the $\Q$-antiinvolution and $\Q$-antiautomorphism of $U_v$ defined by
\begin{align*}
\Omega(e_i) &= f_i,\quad
\Omega(f_i)=e_i,\quad
\Omega(v^h)=v^{-h},\quad
\Omega(v)=v^{-1},\\
\Upsilon(e_i) &= v_if_iK_i^{-1},\quad
\Upsilon(f_i)=v_i^{-1}K_ie_i,\quad
\Upsilon(v^h)=v^{-h},\quad
\Upsilon(v)=v^{-1}.
\end{align*}

\enprop

We denote by $P(\lambda):=\{\mu\in\MP\mid V(\lambda)_{\mu}\ne 0\}$ 
the set of weights of $V(\lambda)$, which
is $W$-invariant~\cite[Proposition 5.2.7]{Lus}.
Let $(U_v^{-})^{\MA}$ be the $\MA$-subalgebra of $U^-_v$ generated by
$\{f_i^{(n)}\mid i\in I,n\geq 0\}$. 
The constructions below use the following deep results:
\bna
\item $(U_v^{-})^{\MA}$ is an $\MA$-lattice of $U_v^{-}$ (see ~\cite[Theorem 14.4.3]{Lus}),
\item $V(\lambda)_{\nu}^{\MA}:=V(\lambda)_{\nu}\cap V(\lambda)^{\MA}$ is an $\MA$-lattice of $V(\lambda)_{\nu}$
for $\nu\in P(\lambda)$ where $V(\lambda)^{\MA}:=(U^{-}_v)^{\MA}1_{\lambda}\subseteq V(\lambda)$ (see ~\cite[Theorem 14.4.11]{Lus}).
\ee

\Def[{\cite[Proposition 3.13]{Tsu}}]\label{tsu313}
For $\lambda\in \MP^+$ and $\mu\in P(\lambda)$, we define 
\begin{align*}
\QSHM_{\lambda,\mu}(X)=(\langle w_i,w_j\rangle_{\QSH})_{1\leq i,j\leq \dim V(\lambda)_{\mu}},\quad
\RSHM_{\lambda,\mu}(X)=(\langle w_i,w_j\rangle_{\RSH})_{1\leq i,j\leq \dim V(\lambda)_{\mu}}
\end{align*}
where $\{w_i\mid 1\leq i\leq \dim V(\lambda)_{\mu}\}$ is an $\MA$-basis of $V(\lambda)_{\mu}^{\MA}$.
\edf

For any $n\ge 0$, define the equivalence relation $\GCONG$ on $\MAT_n (\MA)$ as follows: 
\[
Y\GCONG Z\defequiv \exists P\in\GL_{n}(\MA), \; \BAR(\TRANS{P})YP=Z.
\] 
For $Z\in\{\QSHM_{\lambda,\mu}(X),\RSHM_{\lambda,\mu}(X)\}$, 
the equivalence class of $Z$ under $\GCONG$ does not depend on the choice of the basis in Definition~\ref{tsu313}. 
Thus, the $\MA$-unimodular equivalence classes of $Z$ are uniquely determined.
Note that by construction $\TRANS{Z}=\BAR(Z)$.
The following is implicit in ~\cite[Proposition 3.16]{Tsu}. 

\Prop\label{Aaux}
For $\lambda\in \MP^+$ and $\mu\in P(\lambda)$, there exists an
$\MA$-basis of $V(\lambda)_{\mu}^{\MA}$ whose associated
$\QSHM_{\lambda,\mu}(X)$ is an $\MA^{\BAR}$-valued symmetric matrix.
\enprop

\Proof
Take an $\MA$-basis $(v_b)_b$ of $V(\lambda)_{\mu}^{\MA}$ of the form $v_b=G_b1_{\lambda}$
with $G_b\in (U^{-}_v)^{\MA}$ and $G_b=\overline{G_b}$, where the bar involution $\overline{\mathstrut\quad\!\!}:U_v\to U_v$ is
defined by 
\begin{align*}
\overline{e_i}=e_i,\quad
\overline{f_i}=f_i,\quad
\overline{v^h}=v^{-h},\quad
\overline{v}=v^{-1}.
\end{align*}
This is possible using the lower canonical basis of $U^-_v$ (see the last paragraph of ~\cite{Ka2}) or using ~\cite[Theorem 6.5]{Lak}.

Let $\HC\colon U_v\to U_v^0$ and $\EV_{\lambda}\colon U_v^0\to \cor$ be the following maps: 
\bnum
\item the Harish-Chandra projection $\HC\colon U_v\twoheadrightarrow U_v^0$, which is the $\cor$-linear projection
from $U_v=U_v^0\oplus((\sum_{i\in I}f_iU_v)+(\sum_{i\in I}U_ve_i))$ onto $U_v^0$,
\item the evaluation map $\EV_{\lambda}\colon U_v^0\to \cor$, which is 
the $\cor$-algebra homomorphism determined by the assignment $\EV_{\lambda}(v^h)=v^{\lambda(h)}$ for each $h\in\MPC$.
\ee
These maps exist by parts (\ref{QPBWa}), (\ref{QPBWb}) in the triangular decomposition theorem respectively.

By the construction of $\langle\cdot,\cdot\rangle_{\QSH}$ (see the proof of ~\cite[Proposition 3.8]{Tsu}), we have
\begin{align}
\langle
v_b,
v_b'\rangle_{\QSH}
=
\EV_{\lambda}(\HC(\Omega(G_b)G_{b'})).
\label{innervalue}
\end{align}

Since $\HC(\Omega(G_b)G_{b'})\in U_v^0\cap U_v^{\MA}$, where 
$U_v^{\MA}$ is an $\MA$-subalgebra of $U_v$ generated by $\{v^h,e_i^{(n)},f_i^{(n)}\mid i\in I,n\geq 0,h\in\MPC\}$, and
it is known (see ~\cite[Theorem 4.5]{Lus2} or ~\cite[Theorem 6.49]{DDPW}) 
that $U_v^0\cap U_v^{\MA}$ is the $\MA$-subalgebra of $U^0_v$ generated by 
\begin{align*}
\left\{
v^h,\left[\substack{K_i;0 \\ n}\right]:=
{\textstyle\prod_{j=1}^{n}\frac{K_iv_i^{-j+1}-K_i^{-1}v_i^{j-1}}{v_i^j-v_i^{-j}}}\quad\!\!\!\!\big|\quad\!\!\!\! i\in I,n\geq 1,h\in\MPC
\right\},
\end{align*}
\eqref{innervalue} is $\MA$-valued. Since $\Omega(G_b)G_{b'}$ is bar-invariant,
\eqref{innervalue} is $\MA^{\BAR}$-valued due to the isomorphism $U^{0}_v\cong\cor[\MPC]$. (For an estimate of \eqref{innervalue} 
when $G_b$ is the lower canonical basis, see ~\cite[Problem 2]{Ka1}.)
\QED

\Cor\label{Aaux2}
For $\lambda\in \MP^+$ and $\mu\in P(\lambda)$, we have 
$\QSHM_{\lambda,\mu}(X)\CONG{\MA}\TRANS{\QSHM_{\lambda,\mu}(X)}$.
\encor

The proof of Proposition \ref{Aaux} also shows that 
$\RSHM_{\lambda,\mu}(X)$ is $\MA$-valued, 
which is again implicit in ~\cite[Proposition 3.16]{Tsu}.

\Prop[{\cite[Proposition 3.16]{Tsu}}]\label{tsu316}
For $\lambda\in \MP^+$ and $\mu\in P(\lambda)$, there exists an
$\MA$-basis of $V(\lambda)_{\mu}^{\MA}$ whose associated
$\QSHM_{\lambda,\mu}(X)$ and $\RSHM_{\lambda,\mu}(X)$ satisfy $D\QSH^M_{\lambda,\mu}(X)=\RSH^M_{\lambda,\mu}(X)$
for a diagonal matrix $D$ all of whose diagonal entries belong to $v^{\Z}$.
\enprop

\subsection{Specialization to the basic representations}\label{SSADE}

Let $X=(a_{ij})_{i,j\in I}$ be a Cartan matrix of type A,D,E 
and let $\HA=X^{(1)}$ be the extended (generalized) Cartan matrix of $X$ indexed by $\HI=\{0\}\sqcup I$ as in Figure \ref{untwisted}.
Let $(a_i)_{i\in\HI}$ be the numerical labels of $\HA$ in Figure ~\ref{untwisted} and let $\delta=\sum_{i\in \HI}a_i\alpha_i$.
We set $U_v = U_v (\HA)$ and apply the notation of \S\ref{pretsu2} to this algebra. 
By~\cite[Lemma 12.6]{Kac}, we have 
$P(\Lambda_0)=\{w\Lambda_0-d\delta\mid w\in W,d\geq 0\}$.

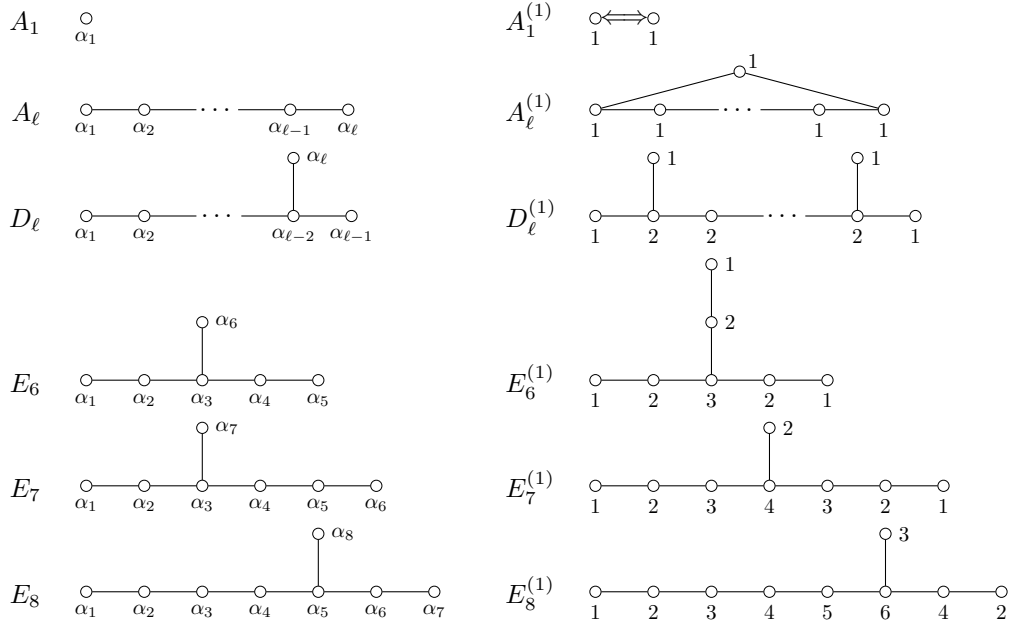
\begin{figure}
\[
\begin{array}{r@{\quad}l@{\qquad}l@{\quad}l}
A_1 & \begin{tikzpicture}[start chain]
\dnode{$\alpha_1$}
\end{tikzpicture} &
A_1^{(1)} & \begin{tikzpicture}[start chain]
\dnodenj{1}
\dnodenj{1}
\path (chain-1) -- node[anchor=mid] {\(\Longleftrightarrow\)} (chain-2);
\end{tikzpicture} \\
%
A_\ell & \begin{tikzpicture}[start chain]
\dnode{$\alpha_1$}
\dnode{$\alpha_2$}
\dydots
\dnode{$\alpha_{\ell-1}$}
\dnode{$\alpha_\ell$}
         \end{tikzpicture}
&
A_\ell^{(1)}  & 
\begin{tikzpicture}[start chain,node distance=1ex and 2em]
\dnode{1}
\dnode{1}
\dydots
\dnode{1}
\dnode{1}
\begin{scope}[start chain=br going above]
\chainin(chain-3);
\node[ch,join=with chain-1,join=with chain-5,label={[inner sep=1pt]10, scale=0.8:\(1\)}] {};
\end{scope}
\end{tikzpicture}\\
D_\ell & 
\begin{tikzpicture}
\begin{scope}[start chain]
\dnode{$\alpha_1$}
\dnode{$\alpha_2$}
\node[chj,draw=none] {\dots};
\dnode{$\alpha_{\ell-2}$}
\dnode{$\alpha_{\ell-1}$}
\end{scope}
\begin{scope}[start chain=br going above]
\chainin(chain-4);
\dnodebr{$\alpha_\ell$}
\end{scope}
\end{tikzpicture} &
D_\ell^{(1)}  & \begin{tikzpicture}
\begin{scope}[start chain]
\dnode{1}
\dnode{2}
\dnode{2}
\dydots
\dnode{2}
\dnode{1}
\end{scope}
\begin{scope}[start chain=br going above]
\chainin(chain-2);
\dnodebr{1};
\end{scope}
\begin{scope}[start chain=br2 going above]
\chainin(chain-5);
\dnodebr{1};
\end{scope}
\end{tikzpicture} \\
E_6& \begin{tikzpicture}
\begin{scope}[start chain]
\dnode{$\alpha_1$}
\dnode{$\alpha_2$}
\dnode{$\alpha_3$}
\dnode{$\alpha_4$}
\dnode{$\alpha_5$}
\end{scope}
\begin{scope}[start chain=br going above]
\chainin (chain-3);
\dnodebr{$\alpha_6$}
\end{scope}
\end{tikzpicture} &
E_6^{(1)}  & 
\begin{tikzpicture}
\begin{scope}[start chain]
\foreach \dyi in {1,2,3,2,1} {
\dnode{\dyi}
}
\end{scope}
\begin{scope}[start chain=br going above]
\chainin(chain-3);
\dnodebr{2}
\dnodebr{1}
\end{scope}
\end{tikzpicture}
 \\
E_7& 
\begin{tikzpicture}
\begin{scope}[start chain]
\dnode{$\alpha_1$}
\dnode{$\alpha_2$}
\dnode{$\alpha_3$}
\dnode{$\alpha_4$}
\dnode{$\alpha_5$}
\dnode{$\alpha_6$}
\end{scope}
\begin{scope}[start chain=br going above]
\chainin (chain-3);
\dnodebr{$\alpha_7$}
\end{scope}
\end{tikzpicture} &
E_7^{(1)} & 
\begin{tikzpicture}
\begin{scope}[start chain]
\foreach \dyi in {1,2,3,4,3,2,1} {
\dnode{\dyi}
}
\end{scope}
\begin{scope}[start chain=br going above]
\chainin(chain-4);
\dnodebr{2}
\end{scope}
\end{tikzpicture}\\
E_8& 
\begin{tikzpicture}
\begin{scope}[start chain]
\dnode{$\alpha_1$}
\dnode{$\alpha_2$}
\dnode{$\alpha_3$}
\dnode{$\alpha_4$}
\dnode{$\alpha_5$}
\dnode{$\alpha_6$}
\dnode{$\alpha_7$}
\end{scope}
\begin{scope}[start chain=br going above]
\chainin (chain-5);
\dnodebr{$\alpha_8$}
\end{scope}
\end{tikzpicture}
 &
E_8^{(1)} & \begin{tikzpicture}
\begin{scope}[start chain]
\foreach \dyi in {1,2,3,4,5,6,4,2} {
\dnode{\dyi}
}
\end{scope}
\begin{scope}[start chain=br going above]
\chainin(chain-6);
\dnodebr{3}
\end{scope}
\end{tikzpicture} \\
\end{array}
\]
\caption{Finite and untwisted affine Dynkin diagrams of types A,D,E.}
\label{untwisted}
\end{figure}

\Def\label{tsu55}
For $d\ge 0$ and $w\in W$, we define $C_{d}^v(X)$ to be $\QSHM_{\Lambda_0,w\Lambda_0-d\delta}(\HA)$.
For $\ell\geq 2$, we put $C_{\ell,d}^v=C_{d}^v(A_{\ell-1})$.
\edf

The equivalence class of $C_{d}^v(X)$ under $\GCONG$ does not depend on the choice of $w\in W$~\cite[Proposition 3.18]{Tsu}.
The following is implicit in the proof of ~\cite[Theorem 4.4]{Tsu}. 
For convenience, we give a proof.

\begin{theorem}\label{thm:Cequiv}
Let $X=(a_{ij})_{i,j\in I}$ be a Cartan matrix of type A, D or E, where  $I=\{1,\ldots,\ell\}$.
For any $d\ge 0$, we have 
$C^v_{d}(X)\CONG{\MA}\HATTT{M}{\ell,d}{-1}\HAT{[X]}{d}\HATT{M}{\ell,d}$
where $[X]= ([a_{ij}])\in\MAT_{I}(\MA)$.
\end{theorem}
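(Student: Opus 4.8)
The plan is to reduce the statement to Corollary~\ref{lem:exp} by realizing the relevant weight space of $V(\Lambda_0)$ as an $\ell$-colored bosonic Fock space equipped with a form of the kind studied in Proposition~\ref{comp}. Since the $\GCONG$-equivalence class of $C^v_d(X)=\QSHM_{\Lambda_0,w\Lambda_0-d\delta}(\HA)$ does not depend on $w$ by~\cite[Proposition 3.18]{Tsu}, nor on the choice of $\MA$-basis of $V(\Lambda_0)_{w\Lambda_0-d\delta}^{\MA}$ used in Definition~\ref{tsu313}, I would fix $w=1$ and put $\mu=\Lambda_0-d\delta$.

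The essential input is the quantum vertex-operator construction of the basic representation $V(\Lambda_0)$ of $U_v(\HA)$ developed in~\cite[\S4]{Tsu}: it provides an identification of $\bigsqcup_{d\ge0}V(\Lambda_0)_{\Lambda_0-d\delta}$ with the $\ell$-colored polynomial ring $V=\cor[y_n^{(i)}\mid i\in I,\,n\ge1]$, graded so that $V_d$ corresponds to $V(\Lambda_0)_{\Lambda_0-d\delta}$ (which has dimension $|\PAR_{\ell}(d)|=|\Omega_{\ell,d}|$), under which the Lusztig $\MA$-form $V(\Lambda_0)_{\Lambda_0-d\delta}^{\MA}$ is carried onto $(\MA[x_n^{(i)}])_d$ with the variables $x_n^{(i)}$ defined by~\eqref{eq:exp}, and the form $\langle\cdot,\cdot\rangle_{\QSH}$ is carried onto a bi-additive form $\langle\cdot,\cdot\rangle_S$ satisfying the hypotheses of Proposition~\ref{comp} with $\sigma=\BAR{}$ and with $A^{(m)}=\INFL_m([X])$ for all $m\ge1$ (the $\INFL_m$ being the structure constants of the $m$-th imaginary Heisenberg modes of $U_v(\HA)$ on a level-one module, in the normalization $y_n^{(i)}\leftrightarrow p_n^{(i)}/n$). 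Here the hypothesis that $X$ be of finite type A, D or E enters: it forces $\det[X]\ne0$ in $\MA$, so $\det A^{(m)}=\INFL_m(\det[X])\ne0$ and $A^{(m)}\in\GL_I(\cor)$, whence all hypotheses of Proposition~\ref{comp}, and therefore of Corollary~\ref{lem:exp} (with $\langle\cdot,\cdot\rangle_K$ as in Proposition~\ref{existsK}~(\ref{existsKa})), are satisfied.

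Corollary~\ref{lem:exp} then yields $M_K\in\GL_{\PAR_{\ell}(d)}(\Z)$ and $M_S=\HATTT{K}{\ell,d}{-1}\HAT{[X]}{d}\HATT{K}{\ell,d}\,M_K$, using that $\bigoplus_{\lambda\in\PAR(d)}\bigotimes_{t\ge1}\SYM^{m_t(\lambda)}(A^{(t)})=\HAT{[X]}{d}$ by Definition~\ref{def:Sd} because $A^{(t)}=\INFL_t([X])$. Under the realization the basis $\{x^{(\ul i)}_\lambda\mid(\lambda,\ul i)\in\Omega_{\ell,d}\}$ becomes an $\MA$-basis of $V(\Lambda_0)_\mu^{\MA}$ whose $\langle\cdot,\cdot\rangle_{\QSH}$-Gram matrix is $M_S$, so $M_S$ lies in the $\GCONG$-class of $\QSHM_{\Lambda_0,\mu}(\HA)=C^v_d(X)$; hence $C^v_d(X)\GCONG M_S$, and in particular $C^v_d(X)\CONG{\MA}M_S\CONG{\MA}\HATTT{K}{\ell,d}{-1}\HAT{[X]}{d}\HATT{K}{\ell,d}$, the last equivalence because $M_K$ is $\Z$-unimodular. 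Finally $\HATT{M}{\ell,d}$ and $\HATT{K}{\ell,d}$ are column equivalent over $\Z$ by Remark~\ref{rem:MK}, say $\HATT{M}{\ell,d}=\HATT{K}{\ell,d}U$ with $U\in\GL_{\PAR_{\ell}(d)}(\Z)$, and then $\HATTT{K}{\ell,d}{-1}\HAT{[X]}{d}\HATT{K}{\ell,d}=U\bigl(\HATTT{M}{\ell,d}{-1}\HAT{[X]}{d}\HATT{M}{\ell,d}\bigr)U^{-1}$; chaining the equivalences gives $C^v_d(X)\CONG{\MA}\HATTT{M}{\ell,d}{-1}\HAT{[X]}{d}\HATT{M}{\ell,d}$, which is moreover $\MA$-valued by Theorem~\ref{inte}.

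The main obstacle is the realization asserted at the start of the second paragraph: one must set up the quantum Fock-space model of $V(\Lambda_0)$ precisely enough that the integral lattice $V(\Lambda_0)_{\Lambda_0-d\delta}^{\MA}$ maps exactly onto $(\MA[x_n^{(i)}])_d$ and the quantized Shapovalov form acquires exactly the shape demanded by Proposition~\ref{comp}, with the matrices $\INFL_m([X])$. This bookkeeping is already carried out in~\cite[\S4]{Tsu} on the way to computing $\det C^v_d(X)$; since only the isomorphism (and not the determinant) is needed here, I would extract it from that source rather than reproduce the vertex-operator computations.
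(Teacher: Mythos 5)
Your proposal is correct and follows essentially the same route as the paper: both realize $V(\Lambda_0)_{\Lambda_0-d\delta}^{\MA}$ as the degree-$d$ part of the $\ell$-colored polynomial ring via the construction in~\cite[\S4]{Tsu} (with $y_r^{(i)}=h_{i,-r}/[r]$ and $x_r^{(i)}$ given by~\eqref{eq:exp}), verify that $\langle\cdot,\cdot\rangle_{\QSH}$ satisfies the hypotheses of Proposition~\ref{comp} with $A^{(m)}=\INFL_m([X])$, apply Corollary~\ref{lem:exp}, and pass from $\HATT{K}{\ell,d}$ to $\HATT{M}{\ell,d}$ using Remark~\ref{rem:MK}. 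The only difference is cosmetic: the paper explicitly quotes the adjunction identity from the proof of~\cite[Theorem 4.4]{Tsu} and rewrites it in the form required by Proposition~\ref{comp}, whereas you defer that bookkeeping to the cited source.
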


\begin{proof}
Let $I= \{1,\ldots,\ell\}$. 
As in the proof of~\cite[Theorem 4.4]{Tsu},
$V(\Lambda_0)_{\Lambda_0-d\delta}^{\MA}$ can be regarded as an $\MA$-lattice of the polynomial ring $\cor[h_{i,-r}\mid i\in I,r\geq 1]$.
More precisely, defining new variables $y_r^{(i)}$ and $x_r^{(i)}$ (for $i\in I$, $r\ge 1$) by 
$y_r^{(i)} = h_{i,-r}/[r]$ and ~\eqref{eq:exp}, we have
\bnum
\item $V(\Lambda_0)_{\Lambda_0-d\delta}$ has a $\cor$-basis 
$\{ y_{\lambda}^{(\ul  i)} \mid (\lambda, \ul  i) \in \Omega_{\ell,d} \}$, 
\item $V(\Lambda_0)^{\MA}_{\Lambda_0 - d\delta}$ has an $\MA$-basis 
$\{ x_{\lambda}^{(\ul  i)} \mid (\lambda, \ul  i) \in \Omega_{\ell,d} \}$,
\ee
where $x_{\lambda}^{(\ul  i)}$ and $y_{\lambda}^{(\ul  i)}$ are defined as in Proposition~\ref{comp}. 
Moreover, by an identity in the proof of~\cite[Theorem 4.4]{Tsu}\footnote{Our $x_r^{(i)}$ and $y_r^{(i)}$ correspond respectively 
to $\tilde{P}^-_{i,r}$ and $h'_{i,-r}$ in \emph{loc. cit.}}
(together with the definition of $\langle \cdot, \cdot\rangle_{\QSH}$), 
we have
\begin{align*}
\left\langle sy_s^{(i)} H ,y_{r_1}^{(i_1)}\cdots y_{r_m}^{(i_m)}\right\rangle_{\!\QSH} 
=
\left\langle
H,{\textstyle{\sum_{k=1}^{m}}} \delta_{s,r_k}[a_{i,i_k}]_s y^{(i_1)}_{r_1}\cdots y^{(i_{k-1})}_{r_{k-1}} y^{(i_{k+1})}_{r_{k+1}}\cdots y^{(i_m)}_{r_m}
\right\rangle_{\!\!\QSH}
\end{align*}
for $H\in\cor[h_{i,-r}\mid i\in I,r\geq 1]$ and $i,i_k\in I, s,r_k\geq 1$. We can rewrite this identity as 
\[
 \left\langle s y_s^{(i)} H, H' \right\rangle_{\!\QSH} =
\Big\langle H, {\textstyle{\sum_{j=1}^{\ell}}} [a_{i,j}]_s \frac{\partial}{\partial y_{s}^{(j)}} H' \Big\rangle_{\!\!\QSH}.
\]
Therefore, 
by Corollary ~\ref{lem:exp}, we have
$
(\langle x^{(\ul  i)}_{\lambda},x^{(\ul  j)}_{\mu}\rangle_{\QSH})_{(\lambda,\ul  i),(\mu,\ul  j)\in\Omega_{\ell,d}}
=\HATTT{K}{\ell,d}{-1}\HAT{[X]}{d}\HATT{K}{\ell,d} M_K
$
where $M_K \in \GL_{\PAR_{\ell-1}(d)} (\Z)$.
By Remark \ref{rem:MK}, we are done.
\end{proof}

\Lemma\label{exQAT}
For $\ell\geq 1$, there exist $Q_{\ell},T_{\ell}\in \GL_{\ell}(\MA)$ such that $Q_{\ell}[A_{\ell}]T_{\ell}=[A'_{\ell}]$ where
$A'_{\ell}=\DIAG(\{1,\ldots,1,\ell+1\})\in\MAT_{\ell}(\Z)$.
\enlemma

\Proof
Define $Q_{\ell}\in \MAT_{\ell}(\MA)$ by 
\[
(Q_{\ell})_{ij}=
\begin{cases}
v^{j}[i] & \text{if } j=i \text{ or } j=i+1,\\
v^{i}[j] & \text{if } j<i,\\
0 & \text{otherwise}.
\end{cases} 
\]
A straightforward calculation shows that the matrix $Q_{\ell} [A_{\ell}]$ is upper-triangular with  diagonal entries 
$1,\ldots,1, v^{\ell} [\ell+1]$ and hence is column equivalent to $[A'_{\ell}]$ over $\MA$. 
Also, $\det( Q_{\ell} [A_{\ell}] ) = v^{\ell} [\ell+1]$.
We have $\det ([A_{\ell}]) = [\ell+1]$ by an easy inductive argument (cf.~\cite{Tsu}, proof of Corollary 4.5).  
Hence, $\det(Q_{\ell}) = v^{\ell}$, so $Q_{\ell} \in \GL_{\ell} (\MA)$.
\QED

\Th\label{gradedcartan}
For $\ell\geq 2$ and $d\geq 0$, 
we have
\begin{align}\label{eq:gc}
C^v_{\ell,d}\CONG{\MA}
\bigoplus_{s=0}^{d}
\left(M_s \DIAG(\{{\textstyle\prod_{i\geq 1}[\ell]^{m_i(\lambda)}_{i}}\mid\lambda\in\PAR(s)\})M_{s}^{-1} \right)^{\oplus |\PAR_{\ell-2}(d-s)|}.
\end{align}
\enth

\Proof
By Theorem~\ref{thm:Cequiv}, we have $C^v_{\ell,d}\CONG{\MA}\HATTT{M}{\ell-1,d}{-1}\HAT{[A_{\ell-1}]}{d}\HATT{M}{\ell-1,d}$.
Let $Q_{\ell-1}$ and $T_{\ell-1}$ be the matrices supplied by Lemma~\ref{exQAT}. 
By the functoriality of symmetric powers,
$\HAT{Q_{\ell-1}}{d}\HAT{[A_{\ell-1}]}{d}\HAT{T_{\ell-1}}{d}=\HAT{[A'_{\ell-1}]}{d}$.
Further, the matrices $\HATTT{M}{\ell-1,d}{-1}\HAT{Q_{\ell-1}}{d}\HATT{M}{\ell-1,d} $ and 
$\HATTT{M}{\ell-1,d}{-1}\HAT{T_{\ell-1}}{d}\HATT{M}{\ell-1,d}$ belong to $\GL_{\PAR_{\ell-1}(d)}(\MA)$.
Indeed, these matrices are $\MA$-valued by Theorem~\ref{inte}, and their determinants are invertible elements of $\MA$ since that is the case 
for the determinants of $Q_{\ell-1}$, $T_{\ell-1}$. 
Therefore,
\begin{align}\label{eq:gcproof}
C_{\ell,d}^v \equiv_{\MA} 
\HATTT{M}{\ell-1,d}{-1}\HAT{[A_{\ell-1}]}{d}\HATT{M}{\ell-1,d}
=\HATTT{M}{\ell-1,d}{-1}\HAT{[A'_{\ell-1}]}{d}\HATT{M}{\ell-1,d}.
\end{align}
It follows from Definition~\ref{def:Sd} that (see \S\ref{matconv})
\begin{align*}
 \HAT{[A'_{\ell-1}]}{d} = {{\textstyle\bigoplus_{\substack{d_i\geq 0 \\ \sum_{i=1}^{\ell-1}d_i=d}}  
\left(\left(\bigotimes_{j=1}^{\ell-2}1_{\PAR(d_j)}\right)
\otimes 
\DIAG(\{ \textstyle\prod_{i\ge 1} [\ell]_i^{m_i(\lambda)} \mid \lambda \in \PAR(d_{\ell-1}) \})\right)}}.
\end{align*}
Substituting this identity and the formula of Remark \ref{rem:Mell} into \eqref{eq:gcproof}, 
we obtain
\begin{equation}\label{eq:gc2}
C^v_{\ell,d}\CONG{\MA}
\bigoplus_{s=0}^{d}
\left(\HATTT{M}{1,s}{-1}\DIAG(\{{\textstyle\prod_{i\geq 1}[\ell]^{m_i(\lambda)}_{i}}\mid\lambda\in\PAR(s)\})\HATT{M}{1,s}\right)^{\oplus |\PAR_{\ell-2}(d-s)|}.
\end{equation}

By Corollary \ref{Aaux2}, we have $C^v_{\ell,d}\CONG{\MA}\TRANS{C^v_{\ell,d}}$.
Hence, transposing both sides of~\eqref{eq:gc2} and using the fact that $\TRANS{M_{1,s}}=M_{s}$ (see Remark~\ref{rem:Mell}), 
we obtain \eqref{eq:gc}.
\QED

\Rem
In the rest of the paper, we 
will see an implication of Conjecture~\ref{ourgradedconjecture}
for ``invariant factors'' of $C^v_{\ell,d}$ 
(Proposition~\ref{gradedHillcon}) 
and give evidence for Conjecture~\ref{ourgradedconjecture}  (Theorem \ref{maintheorem}).
For Cartan matrices $X$ of the other simply-laced finite types (D and E), we can prove the existence of $Q_{X},T_{X}\in \GL_{I}(\MA)$ such that
\bna
\item
$Q_{X}[X]T_{X}=\DIAG(\{1,\ldots,1,\det[X]\})$ for $X\ne D_{2m}$, 
\item
$Q_{X}[X]T_{X}=\DIAG(\{1,\ldots,1,[2],[2]_{2m-1}\})$ for $X=D_{2m}$, 
\ee
where $m\geq 2$ (for the ungraded case $v=1$, see ~\cite[Table 1]{Hil}). For the value of $\det[X]$, see ~\cite[proof of Corollary 4.5]{Tsu}.
These results allow us to analyze $C^v_d(X)$ further:
a conjectural formula for invariant factors of $\QSHM_{\Lambda_0,\mu}(Z)$ for $\mu\in P(\Lambda_0)$
and evidence for it in the spirit of this paper when $Z=X^{(1)}$
and $X$ is of type D or E
as well as
for the \emph{twisted} affine A,D,E cases
will be given elsewhere.
Results on these invariant factors
would provide information on modular reductions of 
$V(\Lambda_0)^{\MA}$, namely, on the structure of 
the $F\otimes_{\MA} U_v^{\MA}$-module $F\otimes_{\MA} V(\Lambda_0)^{\MA}$ and its unique simple quotient, where $F$ is any field, viewed as an $\MA$-module via a 
fixed ring homomorphism $\MA\to F$. 

\enrem

\subsection{Graded Cartan matrices and implications of Conjecture \ref{ourgradedconjecture}}\label{subsec:gradeddef}

\Def\label{GCartanInv} 
Let $A$ be a finite-dimensional graded algebra over a field $\corr$, i.e., $A$ has a decomposition $A=\bigoplus_{i\in\Z}A_i$ 
into $\corr$-vector spaces such that $A_iA_j\subseteq A_{i+j}$ for all $i,j\in\Z$.
\bna
\item We denote by $\GMOD{A}$ the abelian category of finite-dimensional left graded $A$-modules and 
degree preserving $A$-homomorphisms between them. The $n$-component of $M\in\GMOD{A}$ is denoted by $M_n$. 
For $M\in\GMOD{A}$ and $k\in \Z$, 
the shifted graded module $M\langle k\rangle$ of $M$ is defined to be the same module as $M$ with the grading given by 
$(M\langle k\rangle)_n=M_{k+n}$ for all $n\in\Z$.
\item 
Fix a grading on each simple $A$-module, and let $\mathcal S (A)$ be the resulting set of graded simple modules. 
We define the graded Cartan matrix $C^v_A$ of $A$ by 
\begin{align*}
C^v_A = 
\left({\textstyle\sum_{k\in\Z}[\PROC(D):D'\langle -k\rangle]v^{k}}\right)_{D,D'\in\mathcal S(A)}\in\MAT_{\mathcal S(A)}(\MA),
\end{align*}
where $\PROC(D)$ is the projective cover of $D\in\GMOD{A}$. 
\item
Let $\GPPP{A}$ be the full subcategory of $\GMOD{A}$ consisting of 
graded projective $A$-modules. The Cartan pairing is defined as follows:
\begin{align*}
\langle \cdot, \cdot \rangle \colon
[\GPPP{A}]\times [\GMOD{A}]\longrightarrow\MA,\quad
 \langle [P],[M] \rangle = \sum_{k\in\Z}\dim_{\corr}\Hom_{A}(P,M\langle k\rangle)v^k,
\end{align*}
where $[M]$ denotes the image of $M$ in the graded Grothendieck group $[\GMOD{A}]$ of $\GMOD{A}$,
which has an $\MA$-module structure given by $v[N]=[N\langle -1 \rangle]$ for $N\in \GMOD{A}$.
\ee
\edf

\Rem\label{candqrel}
\begin{enumerate}[(a)]
\item 
Each simple $A$-module has a unique grading up to grading shift (see~\cite[Theorem 9.6.8]{NVO}). 
Moreover, each simple graded $A$-module has a unique graded projective cover. Consequently, changing $\mathcal S(A)$ results in $C^v_A$ being conjugated by a diagonal matrix with integer powers of $v$ on the diagonal. Certainly, the $\MA$-unimodular equivalence class of $C^v_A$ does not depend on the choice of $\mathcal S(A)$,
\item\label{candqrelb} $C^v_A = (\langle [\PROC(D)], [\PROC(D')] \rangle)_{D',D\in \mathcal S(A)}$ when
$\corr$ is a splitting field for $A$,
\item $C^v_A$ is a refinement of $C_A$ in the sense that $C^v_A|_{v=1}=C_{A}$.
\end{enumerate}
\enrem

Let $\ell\ge 2$ and $n\ge 0$. 
As usual, a partition $\rho$ is an \emph{$\ell$-core} if $\rho$ contains no rim $\ell$-hooks. 
We denote by $\BLOCK_{\ell}(n)$ the set of tuples $(\rho,d)$ where $\rho$ 
is an $\ell$-core and $d\geq 0$ is an integer such that $|\rho|+\ell d=n$.
It is well known that the set $\BLOCK_{\ell}(n)$ parameterizes the blocks of $\MH_n(\corrr{\ell};\qq{\ell})$ (see~\cite{DJ}).
When $\ell=p$ is a prime, $\BLOCK_{\ell}(n)$ parameterizes the blocks of $\F_p\mathfrak{S}_n$.
We denote by $B^{(\ell)}_{\rho,d}$ the corresponding block algebra of $A:=\MH_n(\corrr{\ell};\qq{\ell})$ 
or of $A:=\F_p\mathfrak{S}_n$ for $(\rho,d)\in\BLOCK_{\ell}(n)$ (for the latter case, $\ell=p$ is a prime).

From now on, we view $B^{(\ell)}_{\rho,d}$ as a graded algebra, with the grading defined by~\cite[Corollary 1]{BK1} (cf.~\S\ref{subsec:graded}). Consequently, 
$A$ becomes graded. 
Clearly, we have
\begin{align}
C^v_A \CONG{\MA}\textstyle\bigoplus_{(\rho,d)\in\BLOCK_{\ell}(n)}C^v_{B^{(\ell)}_{\rho,d}}.
\label{juyopoint2}
\end{align}
In fact, the two sides are equal if appropriate choices are made. 

By~\cite[Theorem 4.18]{BK3}, there is an isomorphism $\iota \colon [\GPPP{A}] \isoto V(\Lambda_0)^{\MA}$ as $U_v(A^{(1)}_{\ell-1})$-modules, 
which identifies the Cartan pairing $\langle \cdot, \cdot\rangle$ 
with the form $\langle \cdot, \cdot \rangle_{\RSH}$ on $V(\Lambda_0)^{\MA}$.
For $(\rho,d)\in\BLOCK_{\ell}(n)$, 
we have $\iota([\GPPP{B^{(\ell)}_{\rho,d}}])=V(\Lambda_0)^{\MA}_{\Lambda_0 - \beta_{\rho,d}}$ where $\beta_{\rho,d} \in \sum_{i\in\wh I}\Z_{\geq 0}\alpha_i$ 
is defined as in~\cite[Definition 5.5(c)]{Tsu} under the identification $\wh I\cong \Z/\ell \Z$.
Noting Remark \ref{candqrel} (\ref{candqrelb}), 
we have $C_{B_{\rho,d}^{(\ell)}}^v\GCONG \RSHM_{\Lambda_0,\Lambda_0 - \beta_{\rho,d}}(A^{(1)}_{\ell-1})$ (see Definition \ref{tsu313}).

By Proposition \ref{tsu316}, Definition \ref{tsu55} and the fact that $\Lambda_0 - \beta_{\rho,d}=w\Lambda_0 -d\delta$ for some $w\in W(A^{(1)}_{\ell-1})$,
we obtain the following result, which is implicit in the proof of~\cite[Theorem 5.6]{Tsu}.

\begin{prop}\label{prop:Shapblock}
Let $\ell\geq 2$ and $n\geq 0$. For any $(\rho,d)\in\BLOCK_{\ell}(n)$,
we have $C_{B_{\rho,d}^{(\ell)}}^v \equiv_{\MA} C_{\ell,d}^v$. 
\end{prop}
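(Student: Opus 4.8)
The plan is to chain together the identifications set up in the paragraphs immediately preceding the statement, so the argument is essentially bookkeeping. First I would record the relation $C^v_{B^{(\ell)}_{\rho,d}}\GCONG\RSHM_{\Lambda_0,\Lambda_0-\beta_{\rho,d}}(A^{(1)}_{\ell-1})$ displayed just above, which comes from Remark~\ref{candqrel}~(\ref{candqrelb}) together with the isomorphism $\iota$ of~\cite[Theorem~4.18]{BK3} (carrying the Cartan pairing to $\langle\cdot,\cdot\rangle_{\RSH}$ and $[\GPPP{B^{(\ell)}_{\rho,d}}]$ onto $V(\Lambda_0)^{\MA}_{\Lambda_0-\beta_{\rho,d}}$). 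I would also note once and for all that $Y\GCONG Z$ implies $Y\CONG{\MA}Z$: if $\BAR(\TRANS{P})YP=Z$ with $P\in\GL_n(\MA)$, then $\det P\in\MA^{\times}$ forces $\BAR(\TRANS{P})\in\GL_n(\MA)$ as well, so $Y$ and $Z$ are unimodularly equivalent over $\MA$.

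Next I would pass from $\RSHM$ to $\QSHM$ via Proposition~\ref{tsu316}: for a suitable $\MA$-basis of $V(\Lambda_0)^{\MA}_{\Lambda_0-\beta_{\rho,d}}$ one has $D\,\QSHM_{\Lambda_0,\Lambda_0-\beta_{\rho,d}}(A^{(1)}_{\ell-1})=\RSHM_{\Lambda_0,\Lambda_0-\beta_{\rho,d}}(A^{(1)}_{\ell-1})$ with $D$ diagonal and every diagonal entry in $v^{\Z}\subseteq\MA^{\times}$; hence $D\in\GL_n(\MA)$ and the two Gram matrices are row equivalent, in particular unimodularly equivalent, over $\MA$. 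Since the $\MA$-unimodular equivalence class of each of these Gram matrices does not depend on the choice of $\MA$-basis of $V(\Lambda_0)^{\MA}_{\Lambda_0-\beta_{\rho,d}}$ (as recorded just after Definition~\ref{tsu313}), it is legitimate to use the basis of Proposition~\ref{tsu316}, and we get $\RSHM_{\Lambda_0,\Lambda_0-\beta_{\rho,d}}(A^{(1)}_{\ell-1})\CONG{\MA}\QSHM_{\Lambda_0,\Lambda_0-\beta_{\rho,d}}(A^{(1)}_{\ell-1})$.

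Finally I would identify the latter matrix with $C^v_{\ell,d}$. Since $\Lambda_0-\beta_{\rho,d}=w\Lambda_0-d\delta$ for some $w\in W(A^{(1)}_{\ell-1})$ (from $\Lambda_0-\beta_{\rho,d}\in P(\Lambda_0)$ and~\cite[Lemma~12.6]{Kac}), Definition~\ref{tsu55} gives $\QSHM_{\Lambda_0,w\Lambda_0-d\delta}(A^{(1)}_{\ell-1})=C^v_d(A_{\ell-1})=C^v_{\ell,d}$, and by~\cite[Proposition~3.18]{Tsu} the $\GCONG$-class of $C^v_d(A_{\ell-1})$ is independent of the chosen $w$, so $\QSHM_{\Lambda_0,\Lambda_0-\beta_{\rho,d}}(A^{(1)}_{\ell-1})\GCONG C^v_{\ell,d}$. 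Concatenating the three links and using that $\GCONG$ implies $\CONG{\MA}$ then yields $C^v_{B^{(\ell)}_{\rho,d}}\CONG{\MA}C^v_{\ell,d}$. I do not expect a genuine obstacle here; all the substance is carried by the cited results (especially~\cite[Theorem~4.18]{BK3} and Proposition~\ref{tsu316}), and the only point needing care is that every change-of-basis and diagonal matrix occurring in the chain actually lies in $\GL_n(\MA)$ rather than merely $\GL_n(\cor)$, which is exactly what Proposition~\ref{tsu316} and the integrality built into Definition~\ref{tsu313} (via Proposition~\ref{Aaux}) provide.
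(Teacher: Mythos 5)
Your proof is correct and follows essentially the same route as the paper: the paper likewise deduces the result from $C^v_{B^{(\ell)}_{\rho,d}}\GCONG\RSHM_{\Lambda_0,\Lambda_0-\beta_{\rho,d}}(A^{(1)}_{\ell-1})$ (via~\cite[Theorem~4.18]{BK3} and Remark~\ref{candqrel}), Proposition~\ref{tsu316}, Definition~\ref{tsu55}, and the identity $\Lambda_0-\beta_{\rho,d}=w\Lambda_0-d\delta$. Your explicit verifications that $\GCONG$ implies $\CONG{\MA}$ and that the diagonal matrix $D$ lies in $\GL_n(\MA)$ are exactly the bookkeeping the paper leaves implicit.
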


The following is an immediate consequence of Theorem \ref{gradedcartan}.

\begin{prop}\label{gradedHillcon}
Let $\ell\geq 2$ and let $d\geq 0$.
If Conjecture~\ref{ourgradedconjecture} is true, then 
\begin{align}
C^v_{\ell,d}\CONG{\MA}
\DIAG\left(\bigsqcup_{s=0}^{d}\{I^v_{\ell}(\lambda)\mid\lambda\in\PAR(s)\}^{|\PAR_{\ell-2}(d-s)|}\right).
\label{ketsuron1}
\end{align}
\end{prop}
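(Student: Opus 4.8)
The plan is to obtain~\eqref{ketsuron1} as a purely formal consequence of Theorem~\ref{gradedcartan} together with the assumption that Conjecture~\ref{ourgradedconjecture} holds. First I would observe that, by Definition~\ref{IandJ}, for any $s\ge 0$ and any $\lambda\in\PAR(s)$ the product $\prod_{i\geq1}[\ell]_i^{m_i(\lambda)}$ that appears in Theorem~\ref{gradedcartan} is exactly $J^v_\ell(\lambda)$. Hence Theorem~\ref{gradedcartan} already gives
\[
C^v_{\ell,d}\CONG{\MA}\bigoplus_{s=0}^d\Bigl(M_s\,\DIAG(\{J^v_\ell(\lambda)\mid\lambda\in\PAR(s)\})\,M_s^{-1}\Bigr)^{\oplus|\PAR_{\ell-2}(d-s)|},
\]
and each inner block is precisely the left-hand side of~\eqref{ourgradedconjectureshiki} with $n=s$.

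Next I would invoke the hypothesis that Conjecture~\ref{ourgradedconjecture} is true, so that for each $s$ with $0\le s\le d$ one has $M_s\,\DIAG(\{J^v_\ell(\lambda)\mid\lambda\in\PAR(s)\})\,M_s^{-1}\CONG{\MA}\DIAG(\{I^v_\ell(\lambda)\mid\lambda\in\PAR(s)\})$. It then remains only to record the elementary fact that unimodular equivalence over $\MA$ is compatible with taking direct sums: if $Y_i=U_iX_iV_i$ with $U_i,V_i$ invertible over $\MA$, then $\bigoplus_iX_i\CONG{\MA}\bigoplus_iY_i$ via the block-diagonal matrices $\bigoplus_iU_i$ and $\bigoplus_iV_i$. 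Applying this to replace each block $M_s\,\DIAG(\{J^v_\ell(\lambda)\mid\lambda\in\PAR(s)\})\,M_s^{-1}$ (appearing with multiplicity $|\PAR_{\ell-2}(d-s)|$) by the diagonal matrix $\DIAG(\{I^v_\ell(\lambda)\mid\lambda\in\PAR(s)\})$ and then collecting all the diagonal entries yields exactly the right-hand side of~\eqref{ketsuron1}.

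There is essentially no obstacle in this argument: the real content has already been dealt with in Theorem~\ref{gradedcartan} (the reduction of $C^v_{\ell,d}$ to block-diagonal form) and is deferred to Conjecture~\ref{ourgradedconjecture} (the conjectural diagonalization of each block). The only things to check are the bookkeeping identity $\prod_{i\geq1}[\ell]_i^{m_i(\lambda)}=J^v_\ell(\lambda)$, which is immediate from Definition~\ref{IandJ}, and the stability of $\CONG{\MA}$ under block direct sums, which needs no computation.
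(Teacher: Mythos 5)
Your proof is correct and is exactly the argument the paper intends: the paper simply declares Proposition~\ref{gradedHillcon} to be ``an immediate consequence of Theorem~\ref{gradedcartan}'', and your write-up spells out that immediacy, namely that $\prod_{i\ge1}[\ell]_i^{m_i(\lambda)}=J^v_\ell(\lambda)$, that each block is the left-hand side of~\eqref{ourgradedconjectureshiki}, and that $\CONG{\MA}$ is preserved under block direct sums (cf.\ Proposition~\ref{threeequiveasyfacts}). Nothing further is needed.
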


\Lemma[{\cite[Lemma 5.5]{BH}}]\label{bhlem}
For any $\ell\geq 2$ and $n\geq 0$, we have the multiset identity 
\begin{align*}
\bigsqcup_{(\rho,d)\in\BLOCK_{\ell}(n)}\bigsqcup_{s=0}^{d}\bigsqcup_{\lambda\in\PAR(s)}\{\CUT_{\ell}(\lambda)\}^{|\PAR_{\ell-2}(d-s)|}
=
\{ \RED_{\ell}(\lambda)\mid \lambda\in\CPAR_{\ell}(n) \}
\end{align*}
where the maps $\CUT_{\ell},\RED_{\ell}\colon \PAR\to\PAR$ are defined as follows for $k\ge 1$:
\begin{align*}
m_k(\RED_{\ell}(\lambda))=\lfloor m_k(\lambda)/\ell\rfloor,\quad
m_k(\CUT_{\ell}(\lambda))=
\begin{cases}
m_k(\lambda) & \text{if } k\notin \ell\Z, \\
0 & \text{otherwise.}
\end{cases}
\end{align*}
\enlemma

Note that $r^{v}_{\ell}(\lambda)=I^{v}_{\ell}(\RED(\lambda))$ and $I^{v}_{\ell}(\lambda)=I^{v}_{\ell}(\CUT(\lambda))$ for all $\lambda \in \PAR$. 
Combining these identities and Lemma \ref{bhlem} with \eqref{juyopoint2} and
Proposition \ref{prop:Shapblock}, we see the following implication.

\Cor\label{mainimplication}
Conjecture \ref{ourgradedconjecture} implies Conjecture \ref{gradedKORcon}.
\encor

\Rem
When $\ell=p^r$ is a prime power, the equivalence~\eqref{ketsuron1}
is nothing but~\cite[Conjecture 6.8]{Tsu}. Similarly, Conjecture~\ref{gradedKORcon} reduces to~\cite[Conjecture 6.18]{Tsu} in this case.
Indeed, the Laurent polynomials $I^v_{p,r} (\lambda)$ and $r^v_{p,r}(\lambda)$ defined in~\emph{loc.~cit.} satisfy 
$I^v_{p,r}(\lambda) = I^v_{p^r} (\lambda)$ and $r^v_{p,r}(\lambda) = r_{p^r}^v (\lambda)$.
\enrem

\section{Combinatorial reductions}\label{sec:4}

\subsection{Variants of unimodular equivalences}\label{varequi}
\Def\label{DVariants}
Let $\MAA$ be a commutative ring, and
let $Y$ and $Z$ be $n\times m$-matrices with entries in $\MAA$.
We say that $Y$ and $Z$ are
\bna
\item unimodularly pseudo-equivalent over $\MAA$ (abbreviated as $Y\PCONG{\MAA} Z$) if we have
$\COKERR{Y}\cong \COKERR{Z}$ as $\MAA$-modules where $\COKERR{T}=\COKER(\MAA^m\to\MAA^n,\boldsymbol{v}\mapsto T\boldsymbol{v})$ for $T\in\{Y,Z\}$,
\item 
Fitting equivalent (abbreviated as $Y\FCONG{\MAA} Z$) 
if $\COKERR{Y}$ and $\COKERR{Z}$ have the same Fitting invariants (see~\cite[\S3.1]{Nor}), i.e.,
we say that $Y\FCONG{\MAA} Z$ if $\FITT_d(Y)=\FITT_d(Z)$ 
whenever $0\leq d< r:=\min\{m,n\}$
where the $d$-th Fitting ideal $\FITT_d(T)$ of $T\in\{Y,Z\}$ over $R$ is the ideal of $R$ generated 
by all $(r-d)\times (r-d)$-minors of $T$.
\ee
\edf

\Prop\label{threeequiveasyfacts}
The following general statements hold:
\bna 
\item\label{genericcase} $Y\CONG{\MAA} Z\Longrightarrow Y\PCONG{\MAA} Z\Longrightarrow Y\FCONG{\MAA} Z$.
\item\label{basechange} for a ring homomorphism $\phi\colon \MAA\to\MAA'$ (see \S\ref{CoAl}), we have the implications
$Y\equiv_R Z\Longrightarrow\phi(Y)\equiv_{R'} \phi(Z)$,
$Y\equiv'_R Z\Longrightarrow\phi(Y)\equiv'_{R'} \phi(Z)$ and 
$Y\equiv^F_R Z\Longrightarrow\phi(Y)\equiv^F_{R'} \phi(Z)$.
\item\label{directsumbehav} Let $(X_\lambda)_{\lambda\in\Lambda}$ and $(Y_\lambda)_{\lambda\in\Lambda}$ be
families of $\MAA$-valued matrices where $\Lambda$ is a finite set and for each $\lambda\in \Lambda$ the matrix $X_{\lambda}$ has the same dimensions as $Y_{\lambda}$.
Then, for any $\SIM\in\{\CONG{\MAA},\PCONG{\MAA},\FCONG{\MAA}\}$, we have the implication $\forall\lambda\in\Lambda,X_\lambda\SIM Y_{\lambda}\Longrightarrow \bigoplus_{\lambda\in \Lambda}X_\lambda\SIM\bigoplus_{\lambda\in \Lambda}Y_\lambda$.
\item\label{elemdivthm} $Y\FCONG{\MAA} Z\Longrightarrow Y\CONG{\MAA} Z$ when $\MAA$ is a PID.
\item\label{locgloFit} $Y\FCONG{\MAA} Z\Longleftrightarrow\forall\MEE\in\MSPEC(\MAA),Y\FCONG{\MAA_{\MEE}} Z$,
\item\label{localringcase} $Y\PCONG{\MAA} Z\Longrightarrow Y\CONG{\MAA} Z$ when $\MAA$ is a semiperfect ring.
\ee
\enprop

\Proof
(\ref{genericcase}) is obvious and (\ref{elemdivthm}) is ``Elementary Divisor Theorem''.
The cases of $\equiv$ and $\equiv^F$ in (\ref{basechange}) are obvious.
The right exactness of the functor $\MAA'\otimes_{\MAA}\textrm{-}$ implies that 
$\MAA'\otimes_{\MAA}\COKERR{Y}\cong \COKERR{\phi(Y)}$.
Thus, the case $\equiv'$ follows.
When $\SIM\in\{\CONG{\MAA},\PCONG{\MAA}\}$, (\ref{directsumbehav}) is obvious.
The case $\FCONG{\MAA}$ follows from the equality 
$\FITT_d(Y\oplus Z)=\sum_{d_1+d_2=d}\FITT_{d_1}(Y)\FITT_{d_2}(Z)$ (see ~\cite[\S3.1, Exercise 3]{Nor}).
(\ref{locgloFit}) follows from the fact that for ideals $I$ and $J$ in $R$, we have 
$I=J \Longleftrightarrow \forall \MEE\in \MSPEC(\MAA), I_{\MEE} = J_{\MEE}$
(see e.g.~\cite[Chapter IV, Corollary 1.4]{Kun}). 
For (\ref{localringcase}), when $\MAA$ is a local ring, 
for any given two $\MAA$-module surjections $\alpha\colon R^{k}\twoheadrightarrow M, \, \beta\colon R^{k}\twoheadrightarrow N$,
we can lift any $\MAA$-module isomorphism $f\colon M\isoto N$ to the isomorphism $g\colon R^{k}\isoto R^{k}$ such that $f\circ\alpha=\beta\circ g$
by the Nakayama Lemma. Thus, (\ref{localringcase}) holds when $\MAA$ is local. 
Since a semiperfect ring is the same thing as a finite direct product of local rings~\cite[(23.11)]{Lam}, 
(\ref{localringcase}) follows by (\ref{basechange}) (see also ~\cite[(4.3)]{LR}).
\QED

By the reasoning used to prove Corollary \ref{gradedHillcon} and Corollary \ref{mainimplication},
Proposition \ref{threeequiveasyfacts} (\ref{basechange}) and (\ref{directsumbehav}) imply:

\begin{corollary}\label{cor:sp}
Let $\MAA$ be a commutative ring with a ring homomorphism $\phi \colon \MA \to \MAA$ and $\SIM\in\{\CONG{\MAA},\PCONG{\MAA},\FCONG{\MAA}\}$.
Suppose that Conjecture~\ref{ourgradedconjecture} holds when we specialize $\MA$ and $\CONG{\MA}$ to $\MAA$ 
and $\SIM$ respectively via $\phi$, i.e., that 
\begin{align*}
 \phi\left(M_n\DIAG(\{J^{v}_{\ell}(\lambda)\mid\lambda\in\PAR(n)\})M_n^{-1} \right)
\SIM \DIAG(\{ \phi(I^{v}_{\ell}(\lambda))\mid\lambda\in\PAR(n)\})
\end{align*}
for all $n\ge 0$. 
Then we have $\phi(Y) \SIM \phi(Z)$ if either 
\begin{enumerate}[(i)]
 \item $Y$ and $Z$ are the matrices on the two sides of~\eqref{ketsuron1}, or 
 \item $Y$ and $Z$ are the matrices on the two sides of~\eqref{eq:grKOR}. 
\end{enumerate}
\end{corollary}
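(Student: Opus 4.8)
The plan is to retrace the reasoning behind Proposition~\ref{gradedHillcon} and Corollary~\ref{mainimplication} while isolating the one place where Conjecture~\ref{ourgradedconjecture} itself is invoked and replacing it by the hypothesis. The key observation is that, apart from the single input $M_s\DIAG(\{J^{v}_{\ell}(\lambda)\mid\lambda\in\PAR(s)\})M_s^{-1}\CONG{\MA}\DIAG(\{I^{v}_{\ell}(\lambda)\mid\lambda\in\PAR(s)\})$, every equivalence used there is an honest $\MA$-unimodular equivalence: Theorem~\ref{gradedcartan}, the block decomposition~\eqref{juyopoint2}, Proposition~\ref{prop:Shapblock}, and, after rewriting diagonal multisets, the identity of Lemma~\ref{bhlem}. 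After applying $\phi$ these become $\MAA$-unimodular equivalences by Proposition~\ref{threeequiveasyfacts}\,(\ref{basechange}); since each of $\CONG{\MAA},\PCONG{\MAA},\FCONG{\MAA}$ is a transitive relation implied by $\CONG{\MAA}$ (Proposition~\ref{threeequiveasyfacts}\,(\ref{genericcase})), a chain of $\CONG{\MAA}$-equivalences may be composed with one $\SIM$-equivalence to give an overall $\SIM$-equivalence, and all three relations are compatible with finite direct sums (Proposition~\ref{threeequiveasyfacts}\,(\ref{directsumbehav})), which is what permits the hypothesis to be inserted block by block.

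For part (i): Theorem~\ref{gradedcartan} gives $C^v_{\ell,d}\CONG{\MA}\bigoplus_{s=0}^{d}(M_s\DIAG(\{J^{v}_{\ell}(\lambda)\mid\lambda\in\PAR(s)\})M_s^{-1})^{\oplus|\PAR_{\ell-2}(d-s)|}$, using that $\prod_{i\ge1}[\ell]_i^{m_i(\lambda)}=J^{v}_{\ell}(\lambda)$ by Definition~\ref{IandJ}. Applying $\phi$ entrywise (which preserves block structure and $\oplus$-multiplicities) together with Proposition~\ref{threeequiveasyfacts}\,(\ref{basechange}) gives $\phi(C^v_{\ell,d})\CONG{\MAA}\bigoplus_{s=0}^{d}\phi(M_s\DIAG(\{J^{v}_{\ell}(\lambda)\})M_s^{-1})^{\oplus|\PAR_{\ell-2}(d-s)|}$. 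By hypothesis each block is $\SIM$ to $\DIAG(\{\phi(I^{v}_{\ell}(\lambda))\mid\lambda\in\PAR(s)\})$, so by Proposition~\ref{threeequiveasyfacts}\,(\ref{directsumbehav}) the direct sum is $\SIM$ to $\DIAG(\bigsqcup_{s=0}^{d}\{\phi(I^{v}_{\ell}(\lambda))\mid\lambda\in\PAR(s)\}^{|\PAR_{\ell-2}(d-s)|})$; composing, $\phi(C^v_{\ell,d})\SIM\DIAG(\bigsqcup_{s=0}^{d}\{\phi(I^{v}_{\ell}(\lambda))\mid\lambda\in\PAR(s)\}^{|\PAR_{\ell-2}(d-s)|})$, which is precisely $\phi(Y)\SIM\phi(Z)$ for $Y,Z$ the two sides of~\eqref{ketsuron1}.

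For part (ii): by~\eqref{juyopoint2} and Proposition~\ref{prop:Shapblock}, $C^v_{\MH_n(\corrr{\ell};\qq{\ell})}\CONG{\MA}\bigoplus_{(\rho,d)\in\BLOCK_{\ell}(n)}C^v_{\ell,d}$; applying $\phi$ and then part (i) to each summand, Proposition~\ref{threeequiveasyfacts}\,(\ref{directsumbehav}) yields $\phi(C^v_{\MH_n(\corrr{\ell};\qq{\ell})})\SIM\DIAG(\bigsqcup_{(\rho,d)}\bigsqcup_{s=0}^{d}\bigsqcup_{\lambda\in\PAR(s)}\{\phi(I^{v}_{\ell}(\lambda))\}^{|\PAR_{\ell-2}(d-s)|})$. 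Using $I^{v}_{\ell}(\lambda)=I^{v}_{\ell}(\CUT_{\ell}(\lambda))$, one may replace $\phi(I^{v}_{\ell}(\lambda))$ by $\phi(I^{v}_{\ell}(\CUT_{\ell}(\lambda)))$; the underlying index multiset $\bigsqcup_{(\rho,d)}\bigsqcup_{s=0}^{d}\bigsqcup_{\lambda\in\PAR(s)}\{\CUT_{\ell}(\lambda)\}^{|\PAR_{\ell-2}(d-s)|}$ equals $\{\RED_{\ell}(\lambda)\mid\lambda\in\CPAR_{\ell}(n)\}$ by Lemma~\ref{bhlem}, and since $r^{v}_{\ell}(\lambda)=I^{v}_{\ell}(\RED_{\ell}(\lambda))$, applying $\phi\circ I^{v}_{\ell}$ entrywise turns this diagonal matrix into one whose diagonal multiset is $\{\phi(r^{v}_{\ell}(\lambda))\mid\lambda\in\CPAR_{\ell}(n)\}$, that is, into $\phi(\DIAG(\{r^{v}_{\ell}(\lambda)\mid\lambda\in\CPAR_{\ell}(n)\}))$ up to a permutation of diagonal entries (itself a $\CONG{\MAA}$-operation). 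Composing once more gives $\phi(Y)\SIM\phi(Z)$ for $Y,Z$ the two sides of~\eqref{eq:grKOR}. The one point needing any care --- a minor one --- is the legitimacy of splicing the hypothesis, which is an assertion only about $\SIM$ and only for one family of matrices, into a chain otherwise built from genuine $\MA$-unimodular equivalences; transitivity of $\SIM$ together with Proposition~\ref{threeequiveasyfacts}\,(\ref{genericcase}) is exactly what makes this legitimate, so no real obstacle arises.
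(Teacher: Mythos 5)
Your proposal is correct and follows essentially the same route as the paper, which proves the corollary precisely by rerunning the arguments of Proposition~\ref{gradedHillcon} and Corollary~\ref{mainimplication} with the conjectural input replaced by the hypothesis, using Proposition~\ref{threeequiveasyfacts}~(\ref{basechange}) and~(\ref{directsumbehav}) to transport the genuine $\MA$-unimodular equivalences and assemble the blocks. Your explicit remark that transitivity of $\SIM$ together with Proposition~\ref{threeequiveasyfacts}~(\ref{genericcase}) justifies splicing the single $\SIM$-step into the chain is exactly the (implicit) point the paper relies on.
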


Throughout, we omit $\phi(\textrm{-})$ if $\phi$ is evident when we apply Proposition \ref{threeequiveasyfacts} (\ref{basechange}).

\subsection{A pseudo-equivalence over $\Z_{(p)} [v,v^{-1}]$}\label{SSpseudo}

\Def\label{scacycpol} For $n\geq 3$, we denote by $\Phi_n\in\Z[v]$ the $n$-th cyclotomic polynomial
and put $\Psi_n=v^{-\phi(n)/2}\Phi_n\in\MA^{\BAR}$ where $\phi$ is the Euler function: $\phi(n)=\#(\Z/n\Z)^{\times}$.
\edf

It is easy to see that, for $n,m\geq 1$,
\begin{align}
\textstyle
[n]_m=\prod_{b \le 3, \, 2mn\in b\Z, \, 2m\not\in b\Z}\Psi_b.
\label{hantei1}
\end{align}
Thus, each $I^{v}_{\ell}(\lambda)$ and $J^{v}_{\ell}(\lambda)$ is a product of certain scaled cyclotomic polynomials $\Psi_b$. 

\Def\label{Drho}
Let $p\in \PRIMES$ and $z\in\N\setminus p\Z$. Let $P=\prod_{i\in I}\Psi_{b_i}$ be a finite product of scaled cyclotomic polynomials
(with $b_i\geq 3$ for all $i\in I$, as in Definition \ref{scacycpol}). We define $\RHO^{(p)}_z(P)=\prod_{(b_i)_{p'}=z}\Psi_{b_i}$.
\edf

Recall the famous equality $\#\CPAR_{s}(n)=\#\RPAR_{s}(n)$ for $s\geq 1$ and $n\geq 0$.
We reserve the symbol $\CRBIJ_{s,n}$ for an arbitrary bijection $\CRBIJ_{s,n}\colon \RPAR_{s}(n)\isoto\CPAR_{s}(n)$ and put
$\CRBIJ_{s}=\sqcup_{n\geq 0}\, \CRBIJ_{s,n}$.
As a standard choice, we can take the Glaisher bijection (see~\cite[\S4]{ASY}, for example) for $s\geq 2$ or the Sylvester bijection for $s=2$ 
(see ~\cite{Bes}, for example).

\Def\label{keybijection}
Fix $M\ge 1$. For any $\lambda\in \PAR$, consider the decomposition
$\lambda=\lambda_{\mathsf{div}}+\lambda_{\mathsf{reg}}$ defined by
$m_{a}(\lambda_{\mathsf{div}})=M\lfloor m_a(\lambda)/M\rfloor$, $m_{a}(\lambda_{\mathsf{reg}})=m_a(\lambda)-m_{a}(\lambda_{\mathsf{div}})$ 
for $a\geq 1$.
We define a size-preserving auto-bijection $\KEYBIJECTION{M}\colon \PAR\isoto\PAR$ by
$\KEYBIJECTION{M}(\lambda)=\mu+\CRBIJ_{M}(\lambda_{\mathsf{reg}})$
where $m_{aM}(\mu)=m_a(\lambda_{\mathsf{div}})/M$
for $a\geq 1$ and $m_{b}(\mu)=0$ for all $b\not\in M\Z$.
\edf

\begin{definition}\label{def:fg}
 For $\ell\ge 2$, $k,t\ge 1$ and $p\in \PRIMES$, define
\begin{align*}
g_{k,t}^{(\ell,p)}=
\begin{cases}
[\ell_{p'}]_{k_{p'}(\ell t)_\p} & \text{if } \nu_p(k)\geq \nu_p(\ell), \\
[\ell t_\p/k_\p]_{k} & \text{if } \nu_p(k)< \nu_p(\ell), 
\end{cases}
\end{align*}
and set $I^v_{\ell,p}(\lambda)=\prod_{k\geq 1}\prod_{t=1}^{m_k(\lambda)}g_{k,t}^{(\ell,p)}$ for $\lambda \in \PAR$. 
\end{definition}

Further, we define $f_{k,t}^{(\ell)}= [\ell_k t_{\pi(\ell_k)}]_{(\ell,k) t_{\pi(\ell_k)'}}$ 
and note that $I_{\ell}^v (\lambda) = \prod_{k\ge 1} \prod_{t=1}^{m_k(\lambda)} f_{k,t}^{(\ell)}$.

\begin{prop}
\label{areductionthm}
Let $p$ be a prime, $\ell\geq 2$, and $z\in\N\setminus p\Z$. For any $\lambda\in\PAR$, we have 
\begin{align*}
\RHO^{(p)}_z(I^v_{\ell}(\lambda))=\RHO^{(p)}_z(I^v_{\ell,p}(\KEYBIJECTION{z/\GCD(z,2\ell)}(\lambda))).
\end{align*}
\end{prop}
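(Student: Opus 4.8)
The plan is to fix $z\in\N\setminus p\Z$, set $M=z/\GCD(z,2\ell)$ (so $M\mid z$ and $z\mid 2\ell M$, and $\nu_p(M)=0$ since $\gcd(z,p)=1$), and to compare the two sides cell by cell. Since $\RHO^{(p)}_z$ is multiplicative by Definition~\ref{Drho}, both sides of the asserted equality factor over the ``cells'' $(k,t)$ with $1\le t\le m_k(\cdot)$. First I would observe that, writing $f_{k,t}^{(\ell)}=[n]_m$ in the notation of~\eqref{hantei1}, one has $mn=\ell t$, so every scaled cyclotomic polynomial $\Psi_b$ occurring in $f_{k,t}^{(\ell)}$ satisfies $b\mid 2\ell t$; if in addition $b_{p'}=z$, then $z\mid 2\ell t$ and hence $M\mid t$. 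Therefore $\RHO^{(p)}_z(f_{k,t}^{(\ell)})=1$ unless $M\mid t$, which gives
\[
\RHO^{(p)}_z\bigl(I^v_\ell(\lambda)\bigr)=\prod_{k\ge1}\prod_{s=1}^{\lfloor m_k(\lambda)/M\rfloor}\RHO^{(p)}_z\bigl(f_{k,Ms}^{(\ell)}\bigr).
\]
An analogous argument applied to $g_{k,t}^{(\ell,p)}=[n]_m$ (where now $2mn$ has $p'$-part divisible by $z$ only if $z\mid 2_{p'}\ell_{p'}k_{p'}$, which I would check is equivalent to $M\mid k$) shows that $\RHO^{(p)}_z(g_{k,t}^{(\ell,p)})=1$ unless $M\mid k$; in particular $\RHO^{(p)}_z\bigl(I^v_{\ell,p}(\sigma)\bigr)=1$ for every $M$-class regular partition $\sigma$.

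Next I would unwind Definition~\ref{keybijection}: $\KEYBIJECTION{M}(\lambda)=\mu+\CRBIJ_M(\lambda_{\mathsf{reg}})$, where $\mu$ is supported on multiples of $M$ with $m_{Mk}(\mu)=\lfloor m_k(\lambda)/M\rfloor$, while $\CRBIJ_M(\lambda_{\mathsf{reg}})$ is $M$-class regular (as $\lambda_{\mathsf{reg}}$ is $M$-regular). These two partitions have disjoint supports, so the defining product of $I^v_{\ell,p}$ splits and $I^v_{\ell,p}(\KEYBIJECTION{M}(\lambda))=I^v_{\ell,p}(\mu)\,I^v_{\ell,p}(\CRBIJ_M(\lambda_{\mathsf{reg}}))$; applying $\RHO^{(p)}_z$, the second factor becomes $1$ by the previous paragraph. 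Hence
\[
\RHO^{(p)}_z\bigl(I^v_{\ell,p}(\KEYBIJECTION{M}(\lambda))\bigr)=\RHO^{(p)}_z\bigl(I^v_{\ell,p}(\mu)\bigr)=\prod_{k\ge1}\prod_{s=1}^{\lfloor m_k(\lambda)/M\rfloor}\RHO^{(p)}_z\bigl(g_{Mk,s}^{(\ell,p)}\bigr).
\]
Comparing this with the displayed formula for the left-hand side reduces the proposition to the \emph{termwise} identity $\RHO^{(p)}_z(f_{k,Ms}^{(\ell)})=\RHO^{(p)}_z(g_{Mk,s}^{(\ell,p)})$ for all $k,s\ge1$; note that $\nu_p(Mk)=\nu_p(k)$, so the case distinction of Definition~\ref{def:fg} is the same on both sides, governed by $\nu_p(k)$ versus $\nu_p(\ell)$.

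To prove the termwise identity I would write $f_{k,Ms}^{(\ell)}=[n_1]_{m_1}$ and $g_{Mk,s}^{(\ell,p)}=[n_2]_{m_2}$ and use~\eqref{hantei1} to list the $\Psi_{zp^a}$ ($a\ge0$) occurring in each: precisely those with $zp^a\ge3$, with $a\le\nu_p(2\ell s)$, and satisfying $z\nmid(2m_i)_{p'}$ or $a>\nu_p(2m_i)$. A direct computation, splitting into the cases $\nu_p(k)\ge\nu_p(\ell)$ and $\nu_p(k)<\nu_p(\ell)$, gives $\nu_p(2m_1n_1)=\nu_p(2m_2n_2)=\nu_p(2\ell s)$ together with $\nu_p(n_1)=\nu_p(n_2)$ (both $0$ in the first case, both $\nu_p(\ell)-\nu_p(k)+\nu_p(s)$ in the second), hence $\nu_p(2m_1)=\nu_p(2m_2)$. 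The only remaining point is then the equivalence
\[
z\mid(2m_1)_{p'}\ \Longleftrightarrow\ z\mid(2m_2)_{p'},
\]
where $(2m_1)_{p'}=2_{p'}\,\GCD(\ell_{p'},k_{p'})\,\bigl((Ms)_{\PRS(\ell_k)'}\bigr)_{p'}$ and $(2m_2)_{p'}=2_{p'}Mk_{p'}$.

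I expect this last equivalence to be the main obstacle. The plan is to prove it prime by prime: for a prime $q\mid z$ (necessarily $q\ne p$), one uses that $q\mid M$ exactly when $\nu_q(z)>\nu_q(2_{p'}\ell_{p'})$ and that $q$ survives the $\PRS(\ell_k)'$-truncation (i.e.\ $q\notin\PRS(\ell_k)$) exactly when $\nu_q(\ell)\le\nu_q(k)$, and checks in each of the four resulting cases that $\nu_q(z)\le\nu_q\bigl((2m_1)_{p'}\bigr)$ holds if and only if $\nu_q(z)\le\nu_q\bigl((2m_2)_{p'}\bigr)$ holds. The content is that $M=z/\GCD(z,2\ell)$ is precisely calibrated so that the $\PRS(\ell_k)$-truncation built into $f_{k,t}^{(\ell)}$ cancels against the factor $M$ appearing in $m_2$; the extra factor $2$ (and the exceptional role of $p=2$) is handled uniformly by working with $2_{p'}$, and no separate treatment of the small cases $z\in\{1,2\}$ or $zp^a<3$ is needed, since~\eqref{hantei1} only produces $\Psi_b$ with $b\ge3$ and the constraint $zp^a\ge3$ has already been imposed throughout.
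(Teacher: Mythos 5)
Your proposal is correct and follows essentially the same route as the paper: the same reduction via $\beta_M$ to the termwise identity $\RHO^{(p)}_z(f^{(\ell)}_{k,Ms})=\RHO^{(p)}_z(g^{(\ell,p)}_{Mk,s})$ (the paper's Lemma~\ref{lem:ktM}), the same observations that only $t\in M\Z$ contributes on the left and that the $M$-class regular part contributes trivially on the right, and the same prime-by-prime case analysis calibrating $M=z/\GCD(z,2\ell)$ against the $\PRS(\ell_k)$-truncation (the paper's~\eqref{eq:ktM2}--\eqref{eq:ktM3}). The four-case check you defer does go through exactly as you describe.
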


First, we need two lemmas. 
Fix $p$ and $z$ to be as in the statement of the proposition. For any $k,t\ge 1$, define
\[
\begin{split}
 \mathcal F_{k,t,z}^{(\ell,p)} &= \{ s\ge 0 \mid 2\ell t \in z p^s \Z \text{ and } 2 (\ell,k) t_{\pi(\ell_k)'} \notin zp^s \Z \}, \\
 \mathcal G_{k,t,z}^{(\ell,p)} &= \begin{cases}
                                  \{ s\ge 0 \mid 2 \ell t_\p k_{p'} \in zp^s \Z \text{ and } 2 (\ell t)_\p k_{p'} \notin zp^s \Z\} & \text{if } \nu_p(k) \ge \nu_p(\ell), \\
                                   \{s\ge 0 \mid 2 \ell t_\p k_{p'} \in zp^s \Z \text{ and } 2k\notin zp^s \Z \}  & \text{if } \nu_p (k) <\nu_p(\ell).
                                 \end{cases}
\end{split}
\]

The following is an immediate consequence of~\eqref{hantei1} and the definitions:
\begin{lemma}\label{lem:prod}
 For $k,t\ge 1$, we have $\rho_z^{(p)} (f_{k,t}^{(\ell)}) = \prod_{s\in \mathcal F_{k,t,z}^{(\ell,p)}} \Psi_{zp^s}$ and 
$\rho_z^{(p)} (g_{k,t}^{(\ell,p)}) = \prod_{s\in \mathcal G_{k,t,z}^{(\ell,p)}} \Psi_{zp^s}$.
\end{lemma}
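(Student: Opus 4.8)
The plan is to read off both identities directly from the factorization~\eqref{hantei1} of quantum integers into scaled cyclotomic polynomials, and then apply the filtering operator $\rho_z^{(p)}$ of Definition~\ref{Drho}. The first step is to record a general reduction: for any $n,m\ge 1$, combining~\eqref{hantei1} with Definition~\ref{Drho} and the fact that $z$ is coprime to $p$ (so a factor $\Psi_b$ survives $\rho_z^{(p)}$ precisely when $b=zp^s$ for some $s\ge 0$) yields
\[
\rho_z^{(p)}([n]_m) = \prod_{\substack{s\ge 0\\ 2mn\in zp^s\Z,\ 2m\notin zp^s\Z}} \Psi_{zp^s}.
\]
Here I would point out that the hypothesis $b\ge 3$ appearing in~\eqref{hantei1} is automatically satisfied for any $s$ occurring in this product: if $zp^s\in\{1,2\}$, then $zp^s$ divides the even integer $2m$, contradicting $2m\notin zp^s\Z$. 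So the index set may be taken to be simply $\{s\ge 0 : 2mn\in zp^s\Z,\ 2m\notin zp^s\Z\}$.

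For $f_{k,t}^{(\ell)} = [\ell_k t_{\pi(\ell_k)}]_{(\ell,k)t_{\pi(\ell_k)'}}$ I would substitute $n=\ell_k t_{\pi(\ell_k)}$, $m=(\ell,k)t_{\pi(\ell_k)'}$ into this reduction and simplify using the elementary identities $(\ell,k)\,\ell_k=\ell$ and $t_{\pi(\ell_k)}t_{\pi(\ell_k)'}=t$ (the latter because $\PRIMES = \pi(\ell_k)\sqcup\pi(\ell_k)'$). Then $2mn = 2\ell t$ and $2m = 2(\ell,k)t_{\pi(\ell_k)'}$, so the index set is exactly $\mathcal F_{k,t,z}^{(\ell,p)}$ once the divisibility conditions are rewritten as membership in $zp^s\Z$; this gives the first asserted identity.

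For $g_{k,t}^{(\ell,p)}$ I would treat the two clauses of Definition~\ref{def:fg} in parallel. If $\nu_p(k)\ge\nu_p(\ell)$, then $g_{k,t}^{(\ell,p)} = [\ell_{p'}]_{k_{p'}(\ell t)_\p}$, so $n=\ell_{p'}$, $m=k_{p'}(\ell t)_\p$, and $nm = \ell_{p'}(\ell t)_\p k_{p'} = \ell t_\p k_{p'}$, using $\ell_{p'}\ell_\p=\ell$ and $(\ell t)_\p = \ell_\p t_\p$. If $\nu_p(k)<\nu_p(\ell)$, then $\nu_p(k)<\nu_p(\ell)\le\nu_p(\ell t)$, so $k_\p\mid(\ell t)_\p$, the quantity $n=\ell t_\p/k_\p$ is a positive integer, and $nm = (\ell t_\p/k_\p)\,k = \ell t_\p k_{p'}$. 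Hence $2mn = 2\ell t_\p k_{p'}$ in both cases, while $2m$ equals $2(\ell t)_\p k_{p'}$ in the first case and $2k$ in the second --- which is exactly the data distinguishing the two clauses defining $\mathcal G_{k,t,z}^{(\ell,p)}$. Substituting into the general reduction gives $\rho_z^{(p)}(g_{k,t}^{(\ell,p)}) = \prod_{s\in\mathcal G_{k,t,z}^{(\ell,p)}}\Psi_{zp^s}$.

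There is no real obstacle: the argument is a routine substitution into~\eqref{hantei1} followed by simplification. The only points that require any care are the two I have flagged --- that the $b\ge 3$ convention for scaled cyclotomic polynomials is automatically respected (so the products can be indexed just by powers $zp^s$), and the handful of $p$-adic valuation identities ($\ell_{p'}\ell_\p=\ell$, $(\ell t)_\p=\ell_\p t_\p$, and $k_\p\mid(\ell t)_\p$ when $\nu_p(k)<\nu_p(\ell)$) that collapse $2mn$ to the common value $2\ell t_\p k_{p'}$ across both clauses in the definition of $g_{k,t}^{(\ell,p)}$.
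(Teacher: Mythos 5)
Your proof is correct and is exactly the argument the paper has in mind: the paper states the lemma as an immediate consequence of the factorization~\eqref{hantei1} and the definitions, and your write-up simply makes that substitution explicit (including the correct observation that the constraint $b\ge 3$ in~\eqref{hantei1}, misprinted there as $b\le 3$, is automatic for the surviving factors, and the valuation identities collapsing $2mn$ to $2\ell t_\p k_{p'}$ in both clauses for $g_{k,t}^{(\ell,p)}$). No gaps.
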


Define $M=z/(z,2\ell)$. 

\begin{lemma}\label{lem:ktM}
 For any $k,t\ge 1$, we have $\rho_z^{(p)} (f_{k,tM}^{(\ell)}) = \rho_z^{(p)} (g_{kM,t}^{(\ell,p)})$. 
\end{lemma}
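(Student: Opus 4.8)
The plan is to deduce Lemma~\ref{lem:ktM} from Lemma~\ref{lem:prod}, which reduces the claim to the equality of index sets $\mathcal F^{(\ell,p)}_{k,tM,z} = \mathcal G^{(\ell,p)}_{kM,t,z}$ as subsets of $\Z_{\ge 0}$. The three elementary facts I would use throughout are: $2\ell M = \operatorname{lcm}(z,2\ell)$ (immediate from $M = z/(z,2\ell)$), so that $z \mid 2\ell M$; $\nu_p(M) = 0$ (since $z\in\N\setminus p\Z$); and consequently $(kM)_\p = k_\p$, $(kM)_{\p'} = k_{\p'}M$. Unwinding the definitions, each of the two index sets has the shape $\{\,s\ge 0 \mid zp^s \mid A,\ zp^s\nmid B\,\}$ for suitable positive integers $A,B$; and in every case $z\mid 2\ell M$ while $2\ell M \mid A$, so the condition $zp^s\mid A$ simplifies to $s\le \nu_p(A)$. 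A one-line $\nu_p$-computation (using $\nu_p(M)=0$) then gives $\nu_p(A) = \nu_p(2\ell t)$ for both $\mathcal F^{(\ell,p)}_{k,tM,z}$ and $\mathcal G^{(\ell,p)}_{kM,t,z}$, so both sets are truncated by the same upper bound. It therefore suffices to prove: (i) $z\mid B_{\mathcal F}\iff z\mid B_{\mathcal G}$, and (ii) when both divisibilities hold, $\nu_p(B_{\mathcal F}) = \nu_p(B_{\mathcal G})$; here $B_{\mathcal F} = 2(\ell,k)(tM)_{\pi(\ell_k)'}$, while $B_{\mathcal G} = 2(\ell t)_\p (kM)_{\p'}$ if $\nu_p(k)\ge\nu_p(\ell)$ and $B_{\mathcal G} = 2kM$ if $\nu_p(k)<\nu_p(\ell)$.

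For (ii) I would split along the same case as in Definition~\ref{def:fg}. If $\nu_p(k)\ge\nu_p(\ell)$ then $\nu_p(\ell_k)=0$, so $p\notin\pi(\ell_k)$ and $\nu_p((tM)_{\pi(\ell_k)'}) = \nu_p(tM) = \nu_p(t)$, which yields $\nu_p(B_{\mathcal F}) = \nu_p(2)+\nu_p(\ell)+\nu_p(t) = \nu_p(B_{\mathcal G})$. If $\nu_p(k)<\nu_p(\ell)$ then $\nu_p(\ell_k)>0$, so $p\in\pi(\ell_k)$ and $\nu_p((tM)_{\pi(\ell_k)'}) = 0$, giving $\nu_p(B_{\mathcal F}) = \nu_p(2)+\nu_p(k) = \nu_p(B_{\mathcal G})$. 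For (i), since $p\nmid z$ it is enough to check, for each prime $q\ne p$, that $\nu_q(z)\le\nu_q(B_{\mathcal F})\iff\nu_q(z)\le\nu_q(B_{\mathcal G})$. Fixing $q$, I would split on whether $q\in\pi(\ell_k)$ (equivalently $\nu_q(\ell)>\nu_q(k)$) and on whether $\nu_q(M)>0$ (equivalently $\nu_q(z)>\nu_q(2\ell)$, using $\nu_q(M) = \max(0,\nu_q(z)-\nu_q(2\ell))$), and read off $\nu_q(B_{\mathcal F})$ and $\nu_q(B_{\mathcal G})$ in each of the four sub-cases: in two of them the two valuations are literally equal, and in the other two both inequalities $\nu_q(z)\le\nu_q(B_{\mathcal F})$ and $\nu_q(z)\le\nu_q(B_{\mathcal G})$ are automatically true (resp.\ both automatically false).

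The one genuinely delicate point, and the part I would take most care over, is the bookkeeping in step (i): one cannot assume that $M$ is coprime to $\ell$ (it need not be), so $\nu_q(M)$ and the membership $q\in\pi(\ell_k)$ have to be tracked simultaneously. The reason the case analysis nonetheless collapses is precisely the clean pair of facts $2\ell M = \operatorname{lcm}(z,2\ell)$ and $\nu_p(M)=0$; once these are in hand, everything reduces to comparing $p$-adic and $q$-adic valuations of a handful of explicit integers, with no combinatorics left. Everything else is a mechanical unwinding of Definition~\ref{def:fg}, the definitions of $\mathcal F$ and $\mathcal G$, and~\eqref{hantei1} as packaged in Lemma~\ref{lem:prod}.
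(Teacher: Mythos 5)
Your proposal is correct and follows essentially the same route as the paper's proof: reduction to the set equality $\mathcal F^{(\ell,p)}_{k,tM,z}=\mathcal G^{(\ell,p)}_{kM,t,z}$ via Lemma~\ref{lem:prod}, disposal of the first conditions using $z\mid 2\ell M$ and $p\nmid M$, and separation of the second condition into its $p$-part (where the valuations agree in both cases of Definition~\ref{def:fg}, matching~\eqref{eq:ktM1}) and its $q$-part for primes $q\mid z$ with $q\ne p$, settled by the same case analysis on $q\in\pi(\ell_k)$ and $\nu_q(M)$ as in~\eqref{eq:ktM2}--\eqref{eq:ktM3}. The only inaccuracy is your tally of how the four sub-cases in step (i) resolve (one gives literal equality of the valuations, two give ``both inequalities automatically true'', one gives ``both automatically false''), but every sub-case does close as required, so this does not affect the argument.
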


\begin{proof}
Due to Lemma~\ref{lem:prod}, it is enough to show that $\mathcal F_{k,tM,z}^{(\ell,p)} = \mathcal G_{kM,t,z}^{(\ell,p)}$. 
Fix $s\ge 0$: we will show that
$s\in \mathcal F_{k,tM,z}^{(\ell,p)}$ if and only if $s\in \mathcal G_{kM,t,z}^{(\ell,p)}$.
Note that $M\notin p\Z$.
If $2 \ell t\notin p^s \Z$, then $s$ belongs to neither of the sets in question, for the first conditions in the definitions of those sets fail. 
Thus, we may assume that $2 \ell t\in p^s \Z$. Since we always have $2\ell M \in z\Z$ (due to the definition of $M$), now the first conditions in the definitions 
of $\mathcal F_{k,tM,z}^{(\ell,p)}$ and $\mathcal G_{kM,t,z}^{(\ell,p)}$ are guaranteed to hold. So we may focus on the second conditions: it remains to show that
\[
 2(\ell,k)(tM)_{\pi(\ell_k)'} \in zp^s \Z \; \Longleftrightarrow \;
\begin{cases}
 2(kM)_{p'} (\ell t)_\p \in z p^s \Z & \text{if } \nu_p (k) \ge \nu_p (\ell), \\
 2kM\in zp^s \Z & \text{if } \nu_p (k) < \nu_p(\ell).  
\end{cases}
\]
This follows from the conjunction of the following two equivalences: 
\begin{align}
2 (\ell,k) (tM)_{\pi (\ell_k)'} \in p^s \Z & \; \Longleftrightarrow \;
\begin{cases}
 2\ell t \in p^s \Z & \text{if } \nu_p (k) \ge \nu_p (\ell), \\
 2k \in p^s \Z & \text{if } \nu_p (k) < \nu_p (\ell)
\end{cases} \quad \text{and} \label{eq:ktM1}
\\
2 (\ell,k) (tM)_{\pi(\ell_k)'} \in z \Z  & \; \Longleftrightarrow \;
2kM \in z \Z. \label{eq:ktM2}
\end{align}
The equivalence~\eqref{eq:ktM1} is immediate in each of the cases on its right-hand side, so it remains only to prove~\eqref{eq:ktM2}.

We always have
\[
 (2 (\ell,k) tM)_{\pi(\ell_k)'} = (2\ell t M)_{\pi(\ell_k)'} \in z_{\pi (\ell_k)'} \Z
\]
since $2\ell M\in z\Z$. Further, $(2k)_{\pi (\ell_k)'} \in (2\ell)_{\pi(\ell_k)'}\Z$, so $(2kM)_{\pi (\ell_k)'} \in (2\ell M)_{\pi (\ell_k)'} \Z \subseteq z_{\pi(\ell_k)'} \Z$. 
This means that the truth values of the statements on both sides of~\eqref{eq:ktM2} do not change if we replace $z$ by $z_{\pi(\ell_k)}$. In other words, it is enough to show that
for all $q\in \pi(\ell_k)$,
\begin{equation}\label{eq:ktM3}
 \nu_q (2(\ell,k)) \ge \nu_q (z) \; \Longleftrightarrow \; \nu_q (2kM) \ge \nu_q (z).
\end{equation}
Now $\nu_q (k) < \nu_q (\ell)$, so $\nu_q (2(\ell,k))= \nu_q (2k)$. Using the definition of $M$, we obtain $\nu_q (2kM) = \nu_q (2k) + \nu_q(z) - \nu_q ((2\ell, z))$. If $\nu_q(2\ell) \ge \nu_q (z)$, then
$\nu_q (2kM) = \nu_q (2k)$ and the equivalence~\eqref{eq:ktM3} is clear. Otherwise, we have $\nu_q (2k) < \nu_q (2\ell) < \nu_q(z)$ and neither side of~\eqref{eq:ktM3} holds. 
\end{proof}

\begin{proof}[Proof of Proposition~\ref{areductionthm}]
Fix $\lambda\in \PAR$, and let $\lambda_{\mathsf{div}},\lambda_{\mathsf{reg}},\mu$ be as in Definition \ref{keybijection}.
It is clear that 
$I^v_{\ell,p}(\KEYBIJECTION{M}(\lambda))=I^v_{\ell,p}(\mu)I^v_{\ell,p}(\CRBIJ_{M}(\lambda_{\mathsf{reg}}))$.
In the expansion $I_{\ell}^v (\lambda) = \prod_{k\ge 1} \prod_{t=1}^{m_k(\lambda)} f_{k,t}^{(\ell)}$,
only $t\in M\Z$ contribute to $\RHO^{(p)}_{z}(I^v_{\ell}(\lambda))$
by Lemma~\ref{lem:prod}, as $2\ell t \in z\Z$ implies $t\in M\Z$. 
Further, $\RHO^{(p)}_z(I^v_{\ell,p}(\CRBIJ_{M}(\lambda_{\mathsf{reg}})))=1$ by the same lemma, 
as $2\ell t_\p k_{p'} \notin z\Z$ for any $k\notin M\Z$ and $t\ge 1$. 
It follows that 
\begin{align*}
\RHO^{(p)}_{z}(I^v_{\ell}(\lambda))&= \prod_{k\ge 1} \prod_{t=1}^{\lfloor m_k (\lambda)/M \rfloor}\RHO^{(p)}_{z}(f^{(\ell)}_{k,tM}), \\
\RHO^{(p)}_{z}(I^v_{\ell,p}(\beta_M (\lambda)))&= \prod_{k\ge 1} \prod_{t=1}^{\lfloor m_k (\lambda)/M \rfloor}\RHO^{(p)}_{z}(g^{(\ell,p)}_{kM,t}).
\end{align*}
The two right-hand sides are equal by Lemma~\ref{lem:ktM}.
\end{proof}

\Prop\label{CRT}
Let $\MAA$ be a commutative ring and let $a\in \MAA$.
Suppose that $a=\prod_{\lambda\in \Lambda}\prod_{x\in T_{\lambda}}x$ for 
a finite set $\Lambda$ and a family of finite multisets $(T_{\lambda}\subseteq \MAA)_{\lambda\in\Lambda}$ such that
any $x\in T_{\lambda}$ and $x'\in T_{\lambda'}$ are coprime (i.e., $xy+x'y'=1$ for some $y,y'\in\MAA$) whenever $\lambda\ne\lambda'$.
Then, as $\MAA$-modules, we have
\begin{align*}
R/(a)\cong
\bigoplus_{\lambda\in\Lambda}R/({\textstyle{\prod_{x\in T_{\lambda}}}}x).
\end{align*}
\enprop

\Proof
Observe that $(\prod_{x\in T_{\lambda}}x)_{\lambda\in\Lambda}$ are pairwise coprime (in the above sense): this follows from the elementary fact that if $x,y, z\in R$ and $x,y$ are both coprime to $z$, then $xy$ is coprime to $z$.
Now the proposition follows from the Chinese remainder theorem for ideals. 
\QED

\Cor\label{reductiontop}
For $p\in\PRIMES$ and $n\geq 0,\ell\geq 2$, we have 
\begin{align*}
\DIAG(\{I^v_{\ell}(\lambda)\mid\lambda\in\PAR(n)\})
\PCONG{\MAp{p}}
\DIAG(\{I^v_{\ell,p}(\lambda)\mid\lambda\in\PAR(n)\}).
\end{align*}
\encor

\Proof
Whenever $3\leq b<c$ and $c/b$ is not a $p$-power, 
there exist $u,w\in\MAp{p}$ such that $\Psi_bu+\Psi_cw=1$ (see~\cite[Lemma 2]{Fil}).
By Proposition \ref{CRT}, we have
\begin{align}
\COKERR{\DIAG(\{f(\lambda)\mid\lambda\in\PAR(n)\})}\cong
\bigoplus_{\lambda\in\PAR(n)}\bigoplus_{z\in \N\setminus p\Z}\MAp{p}/(\rho_z^{(p)} (f(\lambda)))
\label{cokerf}
\end{align}
as $\MAp{p}$-modules
for any $f\in\{I^v_{\ell},I^v_{\ell,p}\}$. By Proposition \ref{areductionthm}, the isomorphism class of \eqref{cokerf} 
does not depend on the choice of $f$.
\QED

\subsection{A conditional proof of Theorem~\ref{maintheorem}}
\begin{proof}[Proof of Theorem~\ref{maintheorem} (\ref{ASYc})]
For $\lambda\in\PAR$ and $\ell\geq 2$ we have $I^v_{\ell,p}(\lambda)=J^{v}_{\ell}(\lambda)$ for every sufficiently large $p\in\PRIMES$ 
(as $g_{k,t}^{(\ell,p)}=[\ell]_k$ for $p>\max(k,t,\ell)$).
Thus, Theorem \ref{maintheorem}(\ref{ASYc}) is a consequence of Corollary \ref{reductiontop} (note that $M_n \in \GL_{\PAR (n)} (\Q)$).
\end{proof}

Recall the matrix $N_n^{(p)}$ defined in \S\ref{subsec:22}. 
Applying Proposition \ref{reductiontoN} (\ref{ketsuron26a}) for
\begin{align}
\MAA=\MAp{p},\quad
f=J^v_{\ell},\quad
g=I^v_{\ell,p},\quad
(r_j=\INFL_j\colon \MAA\to\MAA,v\mapsto v^j)_{j\in \N \setminus p\Z},
\label{ingre}
\end{align}
we get the following.

\Prop\label{Zpvalued}
For any $p\in \PRIMES$, $\ell\geq 2$ and $n\geq 0$, the matrix $N^{(p)}_{n}\DIAG(\{J^{v}_{\ell}(\lambda)\mid\lambda\in\POW_p(n)\})\NPI{n}$ is $\MAp{p}$-valued.
\enprop

Further,
in \S\ref{sec:5}, we will prove the following result. 

\Th\label{meidai}
Suppose that $0\ne \theta=a/b\in\Q$, where $a,b\in \Z$ and $\GCD(a,b)=1$. Let $p$ be a prime such that $a,b\notin p\Z$. Then,
for any $\ell\geq 2$ and $n\geq 0$, we have
\begin{align}
\label{mokuteki}
N^{(p)}_{n}\DIAG(\{J^{v}_{\ell}(\lambda)|_{v=\theta}\mid\lambda\in\POW_p(n)\})\NPI{n}\CONG{\Z_{(p)}} 
\DIAG(\{I^{v}_{\ell,p}(\lambda)|_{v=\theta}\mid\lambda\in\POW_p(n)\}).
\end{align}
\enth

\begin{proof}[Proof of Theorem~\ref{maintheorem} (\ref{subs}) assuming Theorem~\ref{meidai}]\label{condproof}
Fix $0\ne \theta =a/b \in \Q$ with $a,b\in \Z$, $(a,b)=1$. 
By Proposition \ref{threeequiveasyfacts} (\ref{elemdivthm}) and (\ref{locgloFit}),
it is enough to show that, 
\begin{align}\label{eq:pfmain1}
M_n\DIAG(\{J^{v}_{\ell}(\lambda)|_{v=\theta}\mid\lambda\in\PAR(n)\})M_n^{-1}
\CONG{\Z_{(p)}} \DIAG(\{I^{v}_{\ell}(\lambda)|_{v=\theta}\mid\lambda\in\PAR(n)\})
\end{align}
for any $p\in\{p\in\PRIMES\mid p\Z\not\ni ab\}\cong\SPEC(\Z[a/b,b/a])$.
Applying Proposition \ref{reductiontoN} (\ref{ketsuron26b}) for
$\MAA'=\Z_{(p)},\phi=(\MAA\to\MAA',v\mapsto\theta)$
in addition to \eqref{ingre}, we obtain 
\begin{align*}
 M_n\DIAG(\{J^{v}_{\ell}(\lambda)|_{v=\theta}\mid\lambda\in\PAR(n)\})M_n^{-1} 
\CONG{\Z_{(p)}} \DIAG(\{I^{v}_{\ell,p}(\lambda)|_{v=\theta}\mid\lambda\in\PAR(n)\}).
\end{align*}
The unimodular equivalence \eqref{eq:pfmain1} now follows by substituting $v=\theta$ to Corollary \ref{reductiontop} and
Proposition \ref{threeequiveasyfacts} (\ref{genericcase}).
\end{proof}



\section{Proof of Theorem \ref{meidai}}\label{sec:5}
\subsection{Elementary prime power estimates}
The following fact is classical. 
\Prop\label{elementarynumbertheory}
Let $p$ be a prime. Suppose that $x,y\in\Z\setminus p\Z$ satisfy $d:=\nu_p (x-y) \ge 1$. 
If either $p\geq 3$ or $d\geq 2$, then $\nu_p (x^n - y^n) = d+\nu_p(n)$ for all $n\ge 1$. 
\enprop

\Proof
We have $x = y+p^d z$ for some $z\in \Z\setminus p\Z$.
The binomial expansion yields 
\[
x^n -y^n = np^d z y^{n-1} + \sum_{k=2}^n \binom{n}{k} p^{kd} z^k y^{n-k},
\]
so it suffices to show that 
$\nu_p(\binom{n}{k})+kd>d+\nu_p(n)$ for $2\leq k\leq n$.
Since $\nu_p(\binom{n}{k})\geq \nu_p(n)-\nu_p(k!)$, it is enough to prove the inequality $kd-\nu_p(k!)-d>0$. Using~\eqref{eq:factorial}, we easily see that
$\nu_p (k!)\le k-1$ and that this inequality is strict unless $k=p=2$. It follows that the desired inequality holds unless we have $d=1$ and $k=p=2$, which is ruled out by the hypothesis. 
\QED

\Cor\label{CASEONEESTIMATE}
Let $p\in\PRIMES$ and let $a,b\in\Z\setminus p\Z$ with $a^2-b^2\in p\Z$.
Then, we have $\nu_p([n]_m|_{v=a/b})=\nu_p(n)$ for all $n,m\geq 1$.
\encor

\Proof
We may assume that $a^2\ne b^2$: otherwise, $[n]_m|_{v=a/b}=\pm n$. Consider $d\geq 1$ and $z\in\Z\setminus p\Z$ such that $a^2-b^2=p^dz$.
Note that $d\geq 2$ if $p=2$.
By Proposition \ref{elementarynumbertheory}, we have
\[
\nu_p([n]_m|_{v=a/b})=\nu_p\left(\frac{a^{2nm}-b^{2nm}}{a^{2m}-b^{2m}}\right)=(\nu_p(nm)+d)-(\nu_p(m)+d)=\nu_p(n). \qedhere
\]
\QED

\Cor\label{CASETWOESTIMATE}
Let $p\geq 3$ be a prime and $a,b\in\Z\setminus p\Z$. Suppose that $a^2-b^2\not\in p\Z$ and $a^{2n}-b^{2n}\in p\Z$ for some $n\geq 2$.
Put $\gamma=\nu_p(a^{t_0}-b^{t_0})$ where $t_0=\min\{t\geq 1\mid a^{2t}-b^{2t}\in p\Z\}$ ($t_0$ exists and divides $n$).
Then $\nu_p([n]_{p^s}|_{v=a/b})=\nu_p(n)+s+\gamma$ for any $s\geq 0$.
\encor

\Proof
Note that $t_0\not\in p\Z$.
We have
\begin{align*}
\nu_p([n]_{p^s}|_{v=a/b})=\nu_p\left(\frac{a^{2np^s}-b^{2np^s}}{a^{2p^s}-b^{2p^s}}\right)
&=\nu_p(a^{2np^s}-b^{2np^s}) \\
&=\nu_p(2np^s/t_0)+\gamma = \nu_p (n) + s+\gamma,
\end{align*}
where the third equality follows from Proposition~\ref{elementarynumbertheory}. 
\QED

\begin{prop}
\label{CASETHREEESTIMATE}
Let $p\in\PRIMES$ and $n\ge 1$. Suppose that $a,b\in\Z\setminus p\Z$ satisfy $a^{2n}-b^{2n}\not\in p\Z$. 
Then, $\nu_p([n]_{p^s}|_{v=a/b})=0$ for any $s\geq 0$.
\end{prop}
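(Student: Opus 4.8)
The plan is to evaluate $[n]_{p^s}$ via the quotient formula $[n]_m=(v^{mn}-v^{-mn})/(v^m-v^{-m})$ and reduce the claim to an elementary congruence. First I would observe that the hypothesis $a^{2n}-b^{2n}\notin p\Z$ forces $a^2\ne b^2$, hence also $a^{2p^s}\ne b^{2p^s}$ and $a^{2p^sn}\ne b^{2p^sn}$ (a rational power of $a^2/b^2$ equal to $1$ would make $a^2/b^2=\pm 1$, and $a^2=-b^2$ is impossible for nonzero integers), so the quotient formula may legitimately be specialized at $v=a/b$. Clearing denominators gives
\[
[n]_{p^s}|_{v=a/b}=\frac{a^{2p^sn}-b^{2p^sn}}{a^{2p^s}-b^{2p^s}}\cdot\frac{a^{p^s}b^{p^s}}{a^{p^sn}b^{p^sn}},
\]
and since $a,b\in\Z\setminus p\Z$ the last factor is a unit in $\Z_{(p)}$. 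Therefore $\nu_p([n]_{p^s}|_{v=a/b})=\nu_p(a^{2p^sn}-b^{2p^sn})-\nu_p(a^{2p^s}-b^{2p^s})$, and it suffices to show that both of the integers $a^{2p^sn}-b^{2p^sn}$ and $a^{2p^s}-b^{2p^s}$ are prime to $p$.

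For this I would use Fermat's little theorem in the form $x^{p^s}\equiv x\pmod p$ for all $x\in\Z$ (by induction on $s$, since $x^{p^{s+1}}=(x^{p^s})^p\equiv x^{p^s}$). Taking $x=a^{2n}$ and $x=b^{2n}$ yields $a^{2p^sn}-b^{2p^sn}\equiv a^{2n}-b^{2n}\pmod p$, which is nonzero mod $p$ by hypothesis. Taking $x=a^2$ and $x=b^2$ yields $a^{2p^s}-b^{2p^s}\equiv a^2-b^2\pmod p$; and since $a^2-b^2$ divides $(a^2)^n-(b^2)^n=a^{2n}-b^{2n}$, which is prime to $p$, the integer $a^2-b^2$ is prime to $p$ as well. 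This gives $\nu_p(a^{2p^sn}-b^{2p^sn})=\nu_p(a^{2p^s}-b^{2p^s})=0$, and the proposition follows.

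Unlike Corollaries \ref{CASEONEESTIMATE} and \ref{CASETWOESTIMATE}, this case does not require the lifting-the-exponent estimate of Proposition \ref{elementarynumbertheory}, since no genuine cancellation of $p$-powers occurs: the point is just that Fermat collapses the exponent $p^s$ modulo $p$. Accordingly, I do not anticipate a real obstacle; the only mildly delicate point is confirming that the quotient expression for $[n]_{p^s}$ is valid after the substitution $v=a/b$, which, as noted, is guaranteed by the hypothesis $a^{2n}-b^{2n}\notin p\Z$. The rest is the bookkeeping separating the ``numerator'' and ``denominator'' contributions to the $p$-valuation.
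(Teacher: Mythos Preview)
Your proof is correct and follows essentially the same approach as the paper's: both reduce to showing that $a^{2np^s}-b^{2np^s}$ and $a^{2p^s}-b^{2p^s}$ are prime to $p$, with the paper stating this tersely (``the hypothesis implies that $a^{2np^s}-b^{2np^s}\notin p\Z$, whence also $a^{2p^s}-b^{2p^s}\notin p\Z$'') while you make the Fermat step and the divisibility $a^2-b^2\mid a^{2n}-b^{2n}$ explicit. The extra care you take in justifying the specialization of the quotient formula is not spelled out in the paper but is a welcome clarification.
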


\Proof
The hypothesis implies that $a^{2n p^s} - b^{2np^s} \notin p\Z$, 
whence we also have $a^{2p^s} - b^{2 p^s} \notin p\Z$. 
Since $\nu_p([n]_{p^s}|_{v=a/b}) = \nu_p ( (a^{2np^s} - b^{2np^s} )/ (a^{2p^s} - b^{2 p^s}) )$, the result follows. 
\QED

\subsection{Some definitions and results from~\cite[\S5]{Evs}}
\label{SSevs}
For the remainder of \S\ref{sec:5}, we fix a prime $p$ and an integer $n\ge 0$. 
The matrices considered in the sequel implicitly depend on these parameters. 
Let
$\ell \ge 2$ and $\theta=a/b\in \Q\setminus \{0\}$ be as in the statement of Theorem~\ref{meidai}. 
We set $r= \nu_p (\ell)$. In what follows, diagonal matrices are generally denoted by lower-case letters. 

Define the matrices
$b^{(\ell, \theta)}=\DIAG(\{J^{v}_{\ell}(\lambda)|_{v=\theta}\mid\lambda\in\POW_p(n)\})$ and $z=\DIAG(\{z_{\lambda}\mid\lambda\in\POW_p(n)\})$, where
$z_{\lambda}$ is given by~\eqref{eq:z}. 

\Lemma[{\cite[Lemma 5.1]{Evs}}]\label{invtrans}
The matrices $\NPI{n}$
 and 
$z^{-1}(\TRANS{N_n^{(p)}})$ are column equivalent over $\Z_{(p)}$.
\enlemma

We write $N= N_n^{(p)}$. 
It follows from the lemma that the left-hand side of Theorem~\ref{meidai} is unimodularly equivalent over $\Z_{(p)}$ to the matrix 
$Y:=Nb^{(\ell,\theta)} z^{-1}(\TRANS{N})$, so Theorem~\ref{meidai} is equivalent to the identity
\begin{equation}\label{eq:Y}
Y\equiv_{\Z_{(p)}} \DIAG(\{I^{v}_{\ell,p}(\lambda)|_{v=\theta}\mid\lambda\in\POW_p(n)\}).
\end{equation}

\Def\label{sixmats}
\begin{enumerate}[(a)]
\item For $\lambda\in \POW_p$, we define partitions 
$\lambda^{<r},\lambda^{\geq r},\OVERy{\lambda}{r}\in\POW_p$
by setting $m_{p^i}(\lambda^{\ge r})=m_{p^{r+i}}(\lambda)$,
\begin{align*}
m_{p^i}(\lambda^{<r})=
\begin{cases}
m_{p^i}(\lambda) & \text{if } i<r, \\
0 & \text{if } i\geq r,
\end{cases}
\quad
m_{p^i}({\OVERy{\lambda}{r}})=
\begin{cases}
m_{p^i}(\lambda) & \text{if }i<r, \\
\sum_{j\geq r}p^{j-r}m_{p^j}(\lambda) & \text{if }i=r, \\
0 & \text{if }i>r,
\end{cases}
\end{align*}
for all $i\ge 0$.
\item For $\lambda\in \POW_p$, we set $x_{\lambda}=\prod_{s\geq 0}m_{p^s}(\lambda)!$ and $y_{\lambda}=\prod_{s\geq 0}p^{sm_{p^s}(\lambda)}$, so that
$z_{\lambda}=x_{\lambda} y_{\lambda}$.
\item 
We define the following seven elements of $\MAT_{\POW_p(n)}(\Z)$:
$x=\DIAG(\{ x_{\lambda} \}_{\lambda} )$, 
$x_{<r}=\DIAG(\{x_{\lambda^{<r}}\}_{\lambda})$,
$x^{\geq r}=\DIAG(\{x_{\lambda^{\geq r}}\}_{\lambda})$,
$y^{<r}=\DIAG(\{y_{\lambda^{<r}}\}_{\lambda})$,
$y^{\geq r}=\DIAG(\{y_{\lambda^{\geq r}}\}_{\lambda})$,
$\widetilde{y}^{(r)}=\DIAG(\{{\textstyle{\prod_{i\geq r}p^{r m_{p^i}(\lambda)}}}\}_{\lambda})$ and
$C^{(r)}$, where the latter is given by
\begin{align*}
(C^{(r)})_{\lambda,\mu}=
\begin{cases}
(N^{(p)}_{|\lambda^{\geq r}|})_{\lambda^{\geq r},\mu^{\geq r}} & \text{if } \OVERy{\lambda}{r}=\OVERy{\mu}{r},\\
0 & \text{if } \OVERy{\lambda}{r}\ne\OVERy{\mu}{r}.
\end{cases}
\end{align*}
Here, $\lambda,\mu$ run over all elements of $\POW_p(n)$. 
\end{enumerate}
\edf

Put $\KA^{(p,r)}=\{\lambda\in\POW_p\mid \OVERy{\lambda}{r}=\lambda\}\subseteq \POW_p$.
For $\kappa\in \KA^{(p,r)}$, set $\POW_{p,r}(n,\kappa):=\{\lambda\in\POW_p(n)\mid \OVERy{\lambda}{r}=\kappa\}$.
Observe that there is a bijection
\begin{align}\label{eq:kappa}
\POW_{p,r}(n,\kappa)\isoto\POW_p(m_{p^r}(\kappa)),\quad
\lambda\MAPSTO\lambda^{\geq r}.
\end{align}

We will call a matrix $Z\in\MAT_{\POW_p(n)}(\Q)$ \emph{block-diagonal} if $Z_{\lambda,\mu}=0$ 
for all $\lambda,\mu\in\POW_p(n)$ with $\OVERy{\lambda}{r}\ne\OVERy{\mu}{r}$. In particular, $C^{(r)}$ is block-diagonal. 
Applying Lemma \ref{invtrans}
to each $\kappa\in\KA^{(p,r)}_n:=\POW_p(n)\cap\KA^{(p,r)}$
and noting that $\POW_p(n)=\bigsqcup_{\kappa\in\KA^{(p,r)}_n}\POW_{p,r}(n,\kappa)$, we see that there exists a block-diagonal matrix $W^{(r)}\in \GL_{\POW_p(n)}(\Z_{(p)})$ such that
$(C^{(r)})^{-1}W^{(r)}=(x^{\ge r} y^{\ge r})^{-1}\cdot\TRANS{C^{(r)}}$.
We define $A^{(r)}=N(C^{(r)})^{-1}$ and $U^{(r)}=(x^{<r})^{-1} A^{(r)}$, so that $N=x^{<r} U^{(r)} C^{(r)}$. 

In \S\ref{CASEONE}, \S\ref{CASETWO}, and \S\ref{CASETHREE}, we consider separate cases and use 
Corollaries~\ref{CASEONEESTIMATE} and~\ref{CASETWOESTIMATE} and Proposition~\ref{CASETHREEESTIMATE} respectively.
The cases of \S\ref{CASEONE} and \S\ref{CASETWO} will require the following specialization of \cite[Lemma 5.6]{Evs}.


\Lemma\label{DVRlinearalgebra}
Let $\MAA$ be a DVR with valuation $\nu\colon K^{\times}\twoheadrightarrow\Z$, where $K$ is the field of fractions of $\MAA$.
Let $I$ be a finite set. 
Suppose that $P$, $Q$, $s=\DIAG(\{s_i\mid i\in I\})$ and $t=\DIAG(\{t_i\mid i\in I\})$ are elements of $\GL_I(K)$
such that 
\begin{align*}
\nu(P_{ij}-\delta_{ij})>\frac{\nu(t_i)-\nu(t_j)}{2}\quad \text{and} \quad
\nu(Q_{ij}-\delta_{ij})>\frac{\nu(t_j)-\nu(t_i)}{2}
\end{align*}
for all $i,j\in I$. 
Then $sPtQs \CONG{\MAA} s^2t$. 
\enlemma

\begin{proof}
 Apply~\cite[Lemma 5.6]{Evs} with $\alpha_i = v(t_i)/2$ and $\beta_i = -v(t_i)/2$. Verifying the hypotheses is straightforward. 
\end{proof}

\subsection{Case $a^2-b^2\in p\Z$}\label{CASEONE}
This is a generalization of the case $v=1$, and we generalize the proof in~\cite[\S5]{Evs}, Proposition~\ref{intp} being an extra needed ingredient. 

Observe that $z=x^{<r} x^{\geq r} y^{<r} y^{\geq r} \widetilde{y}^{(r)}$.
Put 
$b^{(<r,\ell,\theta)}=\DIAG(\{J^{v}_{\ell}(\lambda^{<r})|_{v=\theta}\mid\lambda\in\POW_p(n)\})$, 
$b^{(\geq r,\ell,\theta)}=b^{(\ell,\theta)} (b^{(<r,\ell,\theta)})^{-1}$
and $d=b^{(<r,\ell,\theta)}(x^{<r}y^{<r})^{-1}$.
Let
\[ X=C^{(r)}b^{(\geq r,\ell,\theta)}(\widetilde{y}^{(r)})^{-1}(C^{(r)})^{-1}W^{(r)}. \]
Note that all the matrices in this product are block-diagonal, so $X$ is block-diagonal. 
Setting also $V=X \cdot \TRANS{U^{(r)}} \cdot X^{-1}$, we have
\begin{align}
\notag Y
&=
Nb^{(\ell,\theta)}_{n}z^{-1}\cdot\TRANS{N}  \displaybreak[1] \\
\notag &=
x^{<r} U^{(r)} C^{(r)} b^{(<r,\ell,\theta)} b^{(\geq r,\ell,\theta)} (x^{<r} x^{\geq r} y^{<r}
y^{\geq r} \widetilde{y}^{(r)})^{-1}\cdot \TRANS{C^{(r)}}\cdot \TRANS{U^{(r)}}x^{<r} \displaybreak[1] \\
\label{eq:l1} &=
x^{<r} U^{(r)} C^{(r)} d\cdot b^{(\geq r,\ell,\theta)} (\widetilde{y}^{(r)})^{-1}
(x^{\geq r} y^{\geq r})^{-1}\cdot \TRANS{C^{(r)}}\cdot \TRANS{U^{(r)}}x^{<r} \displaybreak[1] \\
\label{eq:l2} &=
x^{<r} U^{(r)} C^{(r)} d\cdot b^{(\geq r,\ell,\theta)} (\widetilde{y}^{(r)})^{-1}
(C^{(r)})^{-1} W^{(r)} \cdot \TRANS{U^{(r)}}x^{<r} \displaybreak[1] \\
\notag &=
x^{<r} U^{(r)} C^{(r)} d\cdot (C^{(r)})^{-1}
C^{(r)} b^{(\geq r,\ell,\theta)} (\widetilde{y}^{(r)})^{-1} (C^{(r)})^{-1}W^{(r)} \cdot \TRANS{U^{(r)}}x^{<r} \displaybreak[1] \\
\label{eq:l3} &= 
x^{<r} U^{(r)} C^{(r)} d (C^{(r)})^{-1} X \cdot \TRANS{U^{(r)}}x^{<r} \displaybreak[1] \\
\label{eq:l4} &=
x^{<r} U^{(r)} d\cdot X \cdot \TRANS{U^{(r)}}x^{<r} \displaybreak[1] \\
\label{eq:l5} &= x^{<r} U^{(r)} d VX x^{<r} \displaybreak[1] \\
\label{eq:l6} &= x^{<r} U^{(r)} d V x^{<r} X \displaybreak[1] \\
\label{eq:l7} &\CONG{\Z_{(p)}} 
x^{<r} U^{(r)} d Vx^{<r}.
\end{align}
Here, Equations~\eqref{eq:l1},~\eqref{eq:l2},~\eqref{eq:l3} and~\eqref{eq:l5} follow from the defining equations of the matrices $d$, $W^{(r)}$, $X$ and $V$ respectively. 
Equations~\eqref{eq:l4} and~\eqref{eq:l6} follow from the facts that the matrices $C^{(r)}$ and $X$ are block diagonal and that any block-diagonal matrix commutes with 
$b^{(<r,\ell,\theta)}$, $x^{<r}$, $y^{<r}$, and hence also with $d$. 


The equivalence~\eqref{eq:l7} is due to the fact that $X\in \GL_{\POW_p (n)} (\Z_{(p)})$, which may be proved as follows. 
Note that $(\widetilde{y}^{(r)})_{\lambda,\lambda} = p^{r\ell(\lambda^{\ge r})}$ for all $\lambda\in \POW_p (n)$.
We have
\begin{equation}\label{eq:X}
\begin{split}
 & C^{(r)} b^{(\ge r, \ell,\theta)} (\widetilde{y}^{(r)})^{-1} (C^{(r)})^{-1}  \\
&= \bigoplus_{\kappa\in \mathcal K^{(p)}} N_{m_{p^r} (\kappa)}^{(p)} 
\DIAG(\{ p^{-r\ell (\mu)} \prod_{j\ge 0} [\ell]_{p^{r+j}}^{m_{p^j} (\mu)} \mid \mu\in \POW_p (m_{p^r} (\kappa))\}) (N_{m_{p^r} (\kappa)}^{(p)})^{-1}, 
\end{split}
\end{equation}
where the right-hand side is interpreted via the identification~\eqref{eq:kappa}: this identity is readily verified from the definitions of the matrices involved. By Proposition~\ref{intp}, the right-hand side of~\eqref{eq:X} is $\Z_{(p)}$-valued. 
By Corollary~\ref{CASEONEESTIMATE}, we have
\[
 \nu_p ((b^{(\ge r, \ell, \theta)})_{\lambda,\lambda}) = \nu_p \left( \prod_{i\ge r} [\ell]_{p^i}^{m_{p^i (\lambda)}} \right) 
=r \ell(\lambda^{\ge r}) = \nu_p ((\widetilde{y}^{(r)})_{\lambda,\lambda}).
\]
for $\lambda \in \POW_p (n)$.
So the $p$-adic valuation of the determinant of the left-hand side of~\eqref{eq:X} is $0$. Therefore, the left-hand side of~\eqref{eq:X} belongs to 
$\GL_{\POW_p (n)} (\Z_{(p)})$. Since $W^{(r)} \in \GL_{\POW_p (n)} (\Z_{(p)})$, we see that $X\in \GL_{\POW_p (n)} (\Z_{(p)})$, as claimed. 

We will complete the proof by applying Lemma~\ref{DVRlinearalgebra} to the product $x^{<r} U^{(r)} d Vx^{<r}$. 
For $\lambda\in\POW_p(n)$,  define 
\begin{align*}
f^{(r)}_{\lambda}=\sum_{0\leq s<r}(r-s)m_{p^s}(\lambda)\quad \text{and} \quad 
e^{(r)}_{\lambda}=\sum_{0\leq s<r}\nu_p(m_{p^s}(\lambda)!).
\end{align*}
We have $\nu_p((x^{<r}_{n})_{\lambda,\lambda})=e^{(r)}_{\lambda}$. Using Corollary~\ref{CASEONEESTIMATE}, we obtain
\begin{align*}
\nu_p(d_{\lambda,\lambda})&=\nu_p (b^{(<r,\ell,\theta)}_{\lambda,\lambda}) - \nu_p (y^{<r}_{\lambda,\lambda}) - \nu_p (x^{<r}_{\lambda,\lambda}) 
= \sum_{0\le s<r} \nu_p ([\ell]_{p^s}^{m_{p^s} (\lambda)} )  - \sum_{0\le s<r} sm_{p^s} (\lambda) - e_{\lambda}^{(r)} \\
&= f_{\lambda}^{(r)} - e_{\lambda}^{(r)} =: k_{\lambda}^{(r)} \qquad \text{and} \\
\nu_p(I^{v}_{\ell,p}(\lambda)|_{v=\theta}) &= \sum_{s\ge 0}  \sum_{t=1}^{m_{p^s}(\lambda)} \nu_p (g_{p^s,t}^{(\ell,p)}|_{v=\theta}) 
=\sum_{0\le s<r} \sum_{t=1}^{m_{p^s} (\lambda)} (r+ \nu_p (t) -s) 
=f^{(r)}_{\lambda}+e^{(r)}_{\lambda}
\end{align*}
(cf.~Definition~\ref{def:fg}).
The hypotheses of Lemma~\ref{DVRlinearalgebra} are verified as follows. By~\cite[Lemma 5.4]{Evs}, we have 
$\nu_p(U^{(r)}_{\lambda,\mu}-\delta_{\lambda,\mu})>\max\{ k^{(r)}_{\lambda}-k^{(r)}_{\mu}, -1 \}$
for all $\lambda,\mu\in \POW_p (n)$, which implies the first desired inequality, namely
\begin{equation}\label{eq:Uineq}
 \nu_p (U^{(r)}_{\lambda,\mu} - \delta_{\lambda,\mu}) > \frac{ k_{\lambda}^{(r)} - k_{\mu}^{(r)} }{2}.
\end{equation}
The second desired inequality concerns $V= X\cdot \TRANS(U^{(r)}) \cdot X^{-1}$ and follows from~\eqref{eq:Uineq} 
because $X \in \GL_{\POW_p (n)} (\Z_{(p)})$ 
is block-diagonal 
and the right-hand side of~\eqref{eq:Uineq} depends only on $\OVERy{\lambda}{r}$ and $\OVERy{\mu}{r}$.

By Lemma~\ref{DVRlinearalgebra}, we have $Y \equiv_{\Z_{(p)}} (x^{<r})^2 d$, and~\eqref{eq:Y} follows because 
$\nu_p ((x^{<r}_{\lambda,\lambda})^2 d_{\lambda,\lambda}) = f_{\lambda}^{(r)} + e_{\lambda}^{(r)} = \nu_p (I_{\ell,p}^v (\lambda) |_{v=\theta})$
for all $\lambda \in \POW_p (n)$.  

\subsection{Case $a^2-b^2\not\in p\Z$ and $a^{2\ell}-b^{2\ell}\in p\Z$}\label{CASETWO}
Note that the assumption implies that $p\geq 3$. 
Let $\gamma$ be as in Corollary \ref{CASETWOESTIMATE}.
Applying that corollary, we obtain
$\nu_p(g^{(\ell,p)}_{p^s,t}|_{v=\theta})=\gamma+r+\nu_p(t)$ for all $t\ge 1$ and $s\ge 0$ (see Definition~\ref{def:fg}). Hence,
\begin{equation}\label{eq:case2nupI}
\nu_p(I^{v}_{\ell,p}(\lambda)|_{v=\theta})=(\gamma+r)\ell(\lambda)+\sum_{s\geq 0}\nu_p(m_{p^s}(\lambda)!) = (\gamma+r)\ell(\lambda) + \nu_p (x_{\lambda}).
\end{equation}

Consider the matrix $K\in\MAT_{\POW_p(n)}(\Q)$ such that $N=xK$.
For each $\lambda\in \POW_p (n)$, we have $M_{\lambda,\lambda}=x_{\lambda}$ by Proposition \ref{nuestimateM} (\ref{nuestimateMa}), 
so $K_{\lambda,\lambda} =1$  (in fact, $K$ is $\Z$-valued by the same Proposition). 
We have $Y=xKb'(\TRANS{K})x$ where $b'=b^{(\ell,\theta)}z^{-1}$.
We will apply Lemma~\ref{DVRlinearalgebra} to this product.
Using Corollary~\ref{CASETWOESTIMATE}, we obtain
\begin{align}
 \nu_p (b'_{\lambda,\lambda}) &= \nu_p (J_{\ell}^v (\lambda)|_{v=\theta}) - \nu_p (z_{\lambda}) 
= \sum_{s\ge 0} m_{p^s} (\lambda) (r+s+\gamma) - \sum_{s\ge 0} (sm_{p^s} (\lambda) + \nu_p (m_{p^s} (\lambda)) ) \notag \\
\label{eq:nupbd} &= (\gamma+r) \ell(\lambda) - \nu_p (x_{\lambda}).
\end{align}

In order to verify the hypotheses of Lemma~\ref{DVRlinearalgebra}, we only need to show that 
$
\nu_p(K_{\lambda,\mu} - \delta_{\lambda,\mu})>(\nu_p(b'_{\lambda,\lambda})-\nu_p(b'_{\mu,\mu}))/2
$
 for all $\lambda,\mu \in \POW_p (n)$. 
This inequality is immediate if $M_{\lambda,\mu}=0$ or if $\lambda = \mu$ (as $K_{\lambda,\lambda}=1$). 
In the remaining case, we have 
\begin{align*}
&{}\nu_p(K_{\lambda,\mu})-\frac{\nu_p(b'_{\lambda,\lambda})-\nu_p(b'_{\mu,\mu})}{2} \\ 
&= 
 (\nu_p(M_{\lambda,\mu})- \nu_p (x_{\lambda})) + \frac{\nu_p (x_{\lambda}) - \nu_p(x_{\mu}) + (\gamma+r) (\ell(\mu)-\ell(\lambda))}{2} \\
&= \frac{\nu_p(M_{\lambda,\mu})-\nu_p(x_{\lambda})}{2}+
\frac{\nu_p(M_{\lambda,\mu})-\nu_p(x_{\mu})+\ell(\mu)-\ell(\lambda)}{2}+\frac{\ell(\mu)-\ell(\lambda)}{2}(\gamma+r-1) >0,
\end{align*}
as the first, second, and third summands are nonnegative by parts \eqref{nuestimateMa}, \eqref{nuestimateMc}, and \eqref{nuestimateMb} of Proposition~\ref{nuestimateM} respectively; moreover, the second summand is positive. 

By Lemma~\ref{DVRlinearalgebra}, $Y \equiv_{\Z_{(p)}} b' x^2$. It follows from~\eqref{eq:case2nupI} and~\eqref{eq:nupbd} that 
$\nu_p (b'_{\lambda,\lambda} x^2_{\lambda}) = \nu_p (I^{v}_{\ell,p}(\lambda)|_{v=\theta})$, so~\eqref{eq:Y} holds.

%
%

\subsection{Case $a^{2\ell}-b^{2\ell}\not\in p\Z$}\label{CASETHREE}
By Proposition~\ref{CASETHREEESTIMATE}, the determinants of the matrices on both sides of \eqref{mokuteki} are invertible in $\Z_{(p)}$.
Since both of these matrices are $\Z_{(p)}$-valued (see Proposition \ref{Zpvalued}), 
they are both unimodularly equivalent over $\Z_{(p)}$ to the identity matrix.

This completes the proof of Theorem \ref{meidai} and hence of Theorem~\ref{maintheorem}.

\section{Remarks on possible generalizations of Theorem~\ref{maintheorem}}\label{fittingideal}

Our aim here is to demonstrate how far we still are from proving Conjecture~\ref{ourgradedconjecture} and to discuss  natural statements that are stronger 
than Theorem~\ref{maintheorem} but are weaker than Conjecture~\ref{ourgradedconjecture}, as well as implications between those statements. Proving some of them -- if indeed they are true -- would provide further evidence for Conjecture~\ref{ourgradedconjecture} and would be of interest in its own right.

\Rem\label{locgol}
In the proof of Theorem~\ref{maintheorem} (cf.~\S\ref{condproof}), we have used 
the fact that the local-global correspondence holds when $\MAA$ is a PID
by Proposition \ref{threeequiveasyfacts} (\ref{elemdivthm}) and (\ref{locgloFit}), i.e.,
\begin{align*}
Y\CONG{\MAA} Z\Longleftrightarrow\forall\MEE\in\MSPEC(\MAA),Y\CONG{\MAA_{\MEE}} Z.
\end{align*}
\enrem

An advantage of considering unimodular pseudo-equivalences $\PCONG{\MAA}$ is that:
\Prop\label{locgol2}
Let $\MAA$ be a $1$-dimensional Noetherian domain.
For $n\times m$-matrices $Y,Z$ with entries in $\MAA$, we have
\begin{align}
Y\PCONG{\MAA} Z\Longleftrightarrow\forall\MEE\in\MSPEC(\MAA),Y\PCONG{\MAA_{\MEE}} Z
\label{locgolpseudo}
\end{align}
if $\COKERR{T}=\TOR_{\MAA}(\COKERR{T})(:=\{x\in\COKERR{T}\mid\exists a\in\MAA\setminus\{0\},ax=0\})$ 
for $T\in\{Y,Z\}$ (for example, when $n=m$ and $\det T\ne 0$ for $T\in\{Y,Z\}$).
\enprop

\Proof
The $\Rightarrow$ direction follows from Proposition~\ref{threeequiveasyfacts} (\ref{basechange}), so we need only prove the $\Leftarrow$ direction.
Since $R$ is an integral domain, the intersection of any two non-zero ideals of $R$ is non-zero, and in particular $I:=\ANN_{\MAA}(\COKERR{Y})\cap\ANN_{\MAA}(\COKERR{Z})\ne 0$.
Clearly, $Y\PCONG{\MAA} Z \Leftrightarrow Y\PCONG{\MAA'} Z$ where $\MAA':=\MAA/I$.
Since $\MAA'$ is Artinian, $\MSPEC(\MAA')$ is a finite set and 
the natural ring homomorphism $\MAA'\to\prod_{\MEE\in\MSPEC(\MAA')}\MAA'_{\MEE}$ is an isomorphism (see ~\cite[(24.C)]{Mat}).
Thus, $Y\PCONG{\MAA'} Z\Leftrightarrow\forall\MEE\in\MSPEC(\MAA'),Y\PCONG{\MAA'_{\MEE}} Z$ by Proposition \ref{threeequiveasyfacts} (\ref{basechange}).
Let $\phi\colon \MAA\twoheadrightarrow\MAA'$ be the natural surjection.
Since  $\MAA'_{\MEE}\cong \MAA_{\phi^{-1}(\MEE)}/I_{\phi^{-1}(\MEE)}$ (see ~\cite[Example 4.18 (a)]{Kun},
if $Y\PCONG{\MAA_{\phi^{-1}(\MEE)}}Z$ for all $\MEE\in\MSPEC(\MAA')$, then $Y\PCONG{\MAA} Z$.
Noting that $I\subseteq \phi^{-1}(\MEE)\in\MSPEC(\MAA)$ for all $\MEE\in\MSPEC(\MAA')$, we deduce the result.  
\QED

An advantage of considering Fitting equivalences $\FCONG{\MAA}$ is that 
for a large class of rings $\MAA$ 
we have an algorithm to decide whether two explicitly given matrices $Y$ and $Z$ are Fitting equivalent or 
not (see ~\cite[Chapter VIII]{RFW} and references therein).
If we have $Y\FCONG{\MAA} Z$, then by Proposition \ref{threeequiveasyfacts} (\ref{elemdivthm})
it is not possible to demonstrate that $Y\NCONG{\MAA} Z$ by localization or specialization to a PID $\MAA'$.
Thus, as far as unimodular equivalences over PIDs are concerned, 
the ultimate piece of evidence for Conjecture \ref{ourgradedconjecture} would be to prove 
that $X\FCONG{\MA} D$, where $X$ and $D$ are the matrices on the two sides of~\eqref{ourgradedconjectureshiki}. 


\begin{remark}
If $X\FCONG{\MA} D$, then, in particular, $X\CONG{\mathbb F_p[v,v^{-1}]} D$ for any prime $p$. 
Whether or not the latter equivalence holds is an interesting intermediate open problem. 
\end{remark}

\Prop\label{corkawanoue}
Let $X$ and $Y$ be $n\times m$-matrices with entries in $\MA$.
If $X|_{v=\theta} \FCONG{\Z[\theta,\theta^{-1}]} Y|_{v=\theta}$ for all $\theta\in\overline{\Q}\setminus\{0\}$,
then $X\FCONG{\MA} Y$.
\enprop

We conclude the paper by proving Proposition~\ref{corkawanoue}, which implies that, in order to show that $X\FCONG{\MA} D$, it would suffice to 
generalize Theorem~\ref{maintheorem} (\ref{subs}) by proving that 
$X|_{v=\theta} \FCONG{\Z[\theta,\theta^{-1}]} D|_{v=\theta}$ for all non-zero algebraic numbers $\theta$. 
Despite Proposition \ref{threeequiveasyfacts} (\ref{genericcase}) and 
Proposition \ref{locgol2},
proving the equivalence $X\FCONG{\Z[\theta,\theta^{-1}]} D$ for an arbitrary $\theta\in \ol{\Q} \setminus \{ 0\}$ (if it is true)
is likely to be considerably more difficult than proving Theorem~\ref{maintheorem} because $\Z[\theta,\theta^{-1}]$ 
is not integrally closed (equivalently, it is not a Dedekind domain) in general. 
However, it may be possible to use the methods of the present paper to prove that 
$X \equiv_{\MO_{\theta}} D$, where $\MO_{\theta}$ is the integral closure of the ring 
$\Z[\theta,\theta^{-1}]$ in its field of fractions, at least for some classes of algebraic numbers $\theta$. 
Establishing whether $X$ and $D$ are Fitting equivalent -- or, indeed, settling Conjecture~\ref{ourgradedconjecture} -- is likely to require new ideas.

\begin{proof}[Proof of Proposition~\ref{corkawanoue}]
The proposition is an immediate corollary of Theorem~\ref{kawanouethm}.
\QED

In the following, let $\SSSS$ be the set of non-constant irreducible polynomials in $\Z[v]$.
Let $\theta\in\overline{\Q}\ \setminus \{0\}$ be a root of $f\in\SSSS$.
For an ideal $I$ of $\MA$, we denote by $I|_{v=\theta}$  the image of $I$ under the ring surjection
$\pi_{\theta}\colon \MA \twoheadrightarrow \Z[\theta,\theta^{-1}]$ given by $v\mapsto \theta$. 
Then, by Gauss's Lemma we have $\KER\pi_{\theta}=\MA f$.

\Th\label{kawanouethm}
Let $I$ and $J$ be ideals of $\MA$. 
If $I|_{v=\theta}= J|_{v=\theta}$ in $\Z[\theta,\theta^{-1}]$ for 
all $\theta\in\overline{\Q}\setminus\{0\}$, then  $I=J$.
\enth

\begin{lemma}\label{lem:ca1}
 Let $R$ be a Noetherian commutative ring. For any 
ideal $I$ of $R$, we have 
\begin{align*}
I = \bigcap_{I \subseteq \MEE \in \MSPEC(R)} \bigcap_{n\ge 1} (I+ \MEE^n).
\end{align*}
\end{lemma}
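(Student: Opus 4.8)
The plan is to prove the two inclusions of the claimed identity separately. The inclusion $I\subseteq \bigcap_{I\subseteq\MEE\in\MSPEC(R)}\bigcap_{n\ge 1}(I+\MEE^n)$ is immediate, since $I\subseteq I+\MEE^n$ for every $\MEE\in\MSPEC(R)$ with $\MEE\supseteq I$ and every $n\ge 1$. For the reverse inclusion I would first reduce to the case $I=(0)$ by passing to the Noetherian ring $\overline{R}:=R/I$: its maximal ideals are exactly the $\MEE/I$ for $\MEE\in\MSPEC(R)$ with $\MEE\supseteq I$, one has $(I+\MEE^n)/I=(\MEE/I)^n$, and (because $I$ lies in every term of the intersection) the image of $\bigcap_{\MEE\supseteq I}\bigcap_{n}(I+\MEE^n)$ in $\overline{R}$ equals $\bigcap_{\overline{\MEE}\in\MSPEC(\overline{R})}\bigcap_{n}\overline{\MEE}^{\,n}$. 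Hence the identity to be proved for $(R,I)$ is equivalent to $\bigcap_{\overline{\MEE}\in\MSPEC(\overline{R})}\bigcap_{n\ge 1}\overline{\MEE}^{\,n}=(0)$ in $\overline{R}$.

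So it suffices to show that, for an arbitrary Noetherian commutative ring $S$, the ideal $J:=\bigcap_{\MEE\in\MSPEC(S)}\bigcap_{n\ge 1}\MEE^n$ is zero. I would argue locally: fix $\MEE\in\MSPEC(S)$. Localising the inclusions $\bigcap_{n}\MEE^n\subseteq\MEE^m$ (localisation is flat, hence preserves inclusions of modules) gives $\bigl(\bigcap_{n}\MEE^n\bigr)_{\MEE}\subseteq\bigcap_{m}(\MEE^m)_{\MEE}=\bigcap_{m}(\MEE S_{\MEE})^m$, the last equality being the compatibility of powers of ideals with localisation. Since $S_{\MEE}$ is a Noetherian local ring with maximal ideal $\MEE S_{\MEE}$, the Krull intersection theorem gives $\bigcap_{m}(\MEE S_{\MEE})^m=0$, so $J_{\MEE}\subseteq\bigl(\bigcap_{n}\MEE^n\bigr)_{\MEE}=0$. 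As an $S$-submodule of $S$ whose localisation at every maximal ideal vanishes is itself zero, we get $J=0$, completing the proof.

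The only genuinely non-formal ingredient is the Krull intersection theorem in the form $\bigcap_{n\ge 1}\mathfrak{n}^{\,n}=(0)$ for a Noetherian local ring $(A,\mathfrak{n})$, which is standard (a consequence of the Artin--Rees lemma); I therefore do not expect a serious obstacle, and what requires a little care is only the bookkeeping of the reduction and localisation steps, namely the identities $(I+\MEE^n)/I=(\MEE/I)^n$ and $(\MEE^n)_{\MEE}=(\MEE S_{\MEE})^n$ and the observation that an inclusion of modules stays an inclusion after localisation. An equally clean alternative avoiding the quotient is to localise the original statement directly at each $\MEE\supseteq I$ and apply the Krull intersection theorem to $R_{\MEE}/IR_{\MEE}$, obtaining $\bigcap_{n}(IR_{\MEE}+\MEE^nR_{\MEE})=IR_{\MEE}$, and then to invoke the same local--global principle for the $R$-module $(Rx+I)/I$, where $x$ is any element of the right-hand intersection.
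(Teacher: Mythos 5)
Your proof is correct and follows essentially the same route as the paper's: reduce to $I=(0)$ by passing to $R/I$, localize at each maximal ideal, apply the Krull intersection theorem in the Noetherian local ring, and conclude by the local--global principle for modules. The extra bookkeeping you supply (the correspondence of ideals under the quotient and the compatibility of powers with localization) is accurate and merely makes explicit what the paper leaves implicit.
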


\begin{proof}
Replacing $R$ with $R/I$, we may assume that $I=0$. Let 
$J=\cap_{\MEE \in \MSPEC(R)} \cap_{n\ge 1} \MEE^n$.
For $\MEE \in \MSPEC(R)$, we have $J_{\MEE} \subseteq \cap_{n\ge 1} \MEE_{\MEE}^n$, and 
$\cap_{n \ge 1} \MEE_{\MEE}^n=0$ in $R_{\MEE}$ by Krull intersection theorem. So $J_{\MEE}=0$ for all $\MEE$, 
whence $J=0$ (see the proof of Proposition \ref{threeequiveasyfacts} (\ref{locgloFit})). 
\end{proof}

\begin{lemma}\label{lem:ca2}
Let $\MEE\in\MSPEC(\Z[v])$ and let $n\ge 1$.
Then, $\MEE^n\cap\SSSS$ is an infinite set.
\end{lemma}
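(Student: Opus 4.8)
The plan is to exhibit, for the given $\mathfrak{m}\in\MSPEC(\Z[v])$ and the given $n\ge 1$, an infinite family of irreducible polynomials lying in $\mathfrak{m}^n$, pairwise distinct because they have pairwise distinct degrees. First I would recall the structure of maximal ideals of $\Z[v]$: by the Nullstellensatz over $\Z$ the residue field $\Z[v]/\mathfrak{m}$ is finite, so $\mathfrak{m}\cap\Z=(p)$ for a prime $p$ and $\mathfrak{m}/p\Z[v]$ is a maximal ideal of $\F_p[v]$; hence $\mathfrak{m}=(p,f)$ for some monic $f\in\Z[v]$ whose reduction mod $p$ is irreducible in $\F_p[v]$. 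Set $d=\deg f$. Since $\mathfrak{m}^n$ is an ideal containing both $p^n$ and $f^n$, it contains $f^nv^k+p^nw$ for every $k\ge 0$ and every $w\in\Z[v]$; these polynomials, for $k$ large, will be my candidates, and the point of the extra factor $v^k$ is that they will be easy to keep monic.

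Next I would fix an auxiliary prime $q\ne p$. For each integer $e>nd$ choose a monic irreducible polynomial $\psi_e\in\F_q[v]$ of degree $e$ (these exist over any finite field). Put $k=e-nd>0$; then $\overline{f^n}v^k\in\F_q[v]$ is monic of degree $e$, so $\psi_e-\overline{f^n}v^k$ has degree $<e$, and since $\overline{p}$ is a unit in $\F_q$ there is a unique $\overline{w}_e\in\F_q[v]$ with $\overline{p}^{\,n}\overline{w}_e=\psi_e-\overline{f^n}v^k$, necessarily of degree $<e$. Lift $\overline{w}_e$ coefficientwise to $w_e\in\Z[v]$ (preserving its degree), and set
\[
g_e := f^n v^k + p^n w_e \in \mathfrak{m}^n .
\]
Then $g_e$ is monic of degree $e$, because the top term $v^e$ comes from $f^nv^k$ while $p^nw_e$ has degree $<e$, and by construction $g_e\equiv\psi_e\pmod q$.

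Finally I would prove each $g_e$ is irreducible in $\Z[v]$: being monic it is primitive, so by Gauss's lemma it suffices to show it is irreducible over $\Q$; a factorization $g_e=AB$ in $\Q[v]$ with both factors of positive degree can be taken with $A,B$ monic in $\Z[v]$, and reducing mod $q$ would give a factorization of $\psi_e$ into two monic polynomials of positive degree, contradicting irreducibility of $\psi_e$. Hence $g_e\in\SSSS$, and as $\deg g_e=e$ the $g_e$ are pairwise distinct, so $\mathfrak{m}^n\cap\SSSS$ is infinite. The only slightly delicate point, and the one I would be most careful about, is the degree bookkeeping guaranteeing that $g_e$ is genuinely monic of degree exactly $e$ (so that reduction mod $q$ does not lower the degree and the Gauss–lemma argument applies verbatim); the choice of $v^k$ rather than an arbitrary monic cofactor is precisely what makes this transparent, and everything else (existence of irreducibles of every degree over $\F_q$, invertibility of $p$ mod $q$, the reduction-mod-$q$ irreducibility test) is entirely standard.
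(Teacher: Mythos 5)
Your proof is correct, and it takes a genuinely different route from the paper's at the key step. Both arguments begin the same way, writing $\MEE=(p,h)$ with $h$ monic and irreducible modulo $p$, and both certify irreducibility by means of an auxiliary prime $q\ne p$; but the paper then simply sets $f_q:=p^n+qh^n\in\MEE^n$ for \emph{each} prime $q\ne p$ and invokes Eisenstein's criterion at $q$ (in its reversed form, since $q$ sits in the leading coefficient and $p^n$ in the constant term), obtaining an infinite family indexed by the primes $q$, all of the same degree $n\deg h$. You instead fix a single $q$, prescribe the reduction of your candidate modulo $q$ to be a chosen monic irreducible $\psi_e\in\F_q[v]$ of degree $e$, solve for the correction term $w_e$ using that $p$ is invertible modulo $q$, and conclude irreducibility of $g_e=f^nv^k+p^nw_e$ by the reduction-mod-$q$ test; your family is indexed by the degree $e$. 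The paper's construction is shorter (a one-line formula plus Eisenstein, with no degree bookkeeping), while yours is slightly more elaborate but buys you monic representatives of every sufficiently large degree in $\MEE^n\cap\SSSS$ — more than the lemma needs, but a clean and fully rigorous alternative. Your degree bookkeeping (monicity of $f^nv^k$, $\deg(p^nw_e)<e$, preservation of degrees under reduction mod $q$ and in the Gauss-lemma step) all checks out.
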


\begin{proof}
It is well known that $\MEE = (p, h)$ for some $p\in \PRIMES$ and 
non-constant monic irreducible polynomial $h$ which remains irreducible in $\F_p[v]$ (see~\cite[Exercise 7.9]{GP}).
For any $q\in\PRIMES$ with $q\ne p$, put $f_q:=p^n + qh^n\in \MEE^n$.
Then $f_q$ is primitive by construction and is in $\SSSS$ by Eisenstein's criterion (applied to the prime $q$). 
\end{proof}

\begin{proof}[Proof of Theorem~\ref{kawanouethm}]
For $\MEE \in \MSPEC(\MA)$ 
and $n\ge 1$, there exists $f\in \MEE^n\cap S$ such that $f\ne \pm v$ by Lemma~\ref{lem:ca2} applied to 
$\MEE \cap \Z[v] \in \MSPEC(\Z[v])$. 
By the hypothesis, we have $I|_{v=\theta} = J|_{v=\theta}$ for a root $\theta\in \ol{\Q}\setminus\{0\}$ of $f$, 
whence $I+\MA f=\pi^{-1}_{\theta}(I|_{v=\theta})=\pi^{-1}_{\theta}(J|_{v=\theta})=J+\MA f$. 
Since $\MA f \subseteq \MEE^n$, it follows that $I+\MEE^n = J+ \MEE^n$. By Lemma~\ref{lem:ca1}, we have $I=J$.
\end{proof}

\Rem\label{kcont}
We learned Theorem \ref{kawanouethm} from Hiraku Kawanoue.
His proof yields the existence of $f\in\SSSS$ such that $I+\MA f\ne J+\MA f$ for ideals $I\ne J\subseteq \MA$
and can be applied when we replace $\Z$ by any unique factorization domain $\MAA$ which has infinitely many prime elements 
modulo $\MAA^{\times}$. 
In order to keep this section short, we adapted the proof to one sufficient for Proposition \ref{corkawanoue}.
While the above proof depends on the description of $\MSPEC(\Z[v])$ and does not allow the indicated generalization, 
it shares the same spirit with Kawanoue's.
\enrem


\section*{Index of notation}

The following index gives references to subsections where symbols are defined:

\begin{longtable}{lll}
$\mathfrak S_n$ & symmetric group & \ref{subsec:ungraded} \\
$\mathcal H_n (\mathbb F;q)$ & Hecke algebra & \ref{subsec:ungraded} \\
$\eta_{\ell} \in k_{\ell}$ & a primitive $\ell$-th root of unity in a field & 
\ref{subsec:ungraded}  \\
$\MOD{A}$ & the category of finite-dimensional left $A$-modules & \ref{subsec:ungraded} \\
$\PROC(D)$ & projective cover of $D$ & \ref{subsec:ungraded} \\
$C_A$ &  Cartan matrix of an algebra 
$A$ & \ref{subsec:ungraded} \\
$\equiv_R$ & unimodular equivalence of matrices & \ref{subsec:kor}  \\
$\ell_k$ & $\ell/(\ell,k)$ & \ref{subsec:kor} \\
$I^v_\ell (\la), J^v_\ell (\la)$ & Laurent polynomials in Definition~\ref{IandJ} & \ref{subsec:graded_kor} \\
$\MSPEC(R)$ & the set of maximal ideals of a ring $R$ & \ref{CoAl} \\
$\MAT_\ell (R)$, $\MAT_S(R)$ & matrix algebra & \ref{matconv} \\
$1_S$ & identity matrix & \ref{matconv} \\
$\DIAG(\{r_s \mid s\in S\})$ & diagonal matrix & \ref{matconv} \\
$\bigoplus_i M_i$ & block-diagonal matrix & \ref{matconv} \\
$\nu_p$ & $p$-adic valuation & \ref{SSSdvr} \\
$\mathbb N$ & the set of nonnegative integers & \ref{subsubsec:int} \\
$\PRIMES$ & the set of prime numbers & \ref{subsubsec:int} \\
$n_{\Pi}$ & $\Pi$-part of $n$ & \ref{subsubsec:int} \\
$\Pi'$, $p'$ & the complements of $\Pi$, $\{p\}$ in $\PRIMES$ & \ref{subsubsec:int} \\
$(a,b)$ & greatest common divisor of $a$ and $b$ & \ref{subsubsec:int} \\
$\cor$ & the function field $\mathbb Q(v)$ & 
\ref{subsubsec:qi} \\
$\MA$ & the ring of Laurent polynomials $\mathbb Z[v,v^{-1}]$ & \ref{subsubsec:qi} \\
$\BAR$ & bar-involution $v\mapsto v^{-1}$ on $\cor$ & \ref{subsubsec:qi}\\
$\INFL_t$ & the inflation map $v\mapsto v^t$ on $\MA$ & \ref{subsubsec:qi} \\
$[n]_m$ & quantum integer & \ref{subsubsec:qi} \\
$[n]_m !$ & quantum factorial &  \ref{subsubsec:qi} \\
$\equiv_G$ & conjugacy relation in a group $G$ & \ref{SSSgroups} \\
$g_p, g_{p'}$ & $p$-part and $p'$-part of $g$ & \ref{SSSgroups} \\
$m_k (\lambda)$ & multiplicity of $k$ in a partition $\lambda$ & 
\ref{SSSPartitions} \\
$\ell(\lambda)$ & length of a partition $\lambda$ & 
\ref{SSSPartitions} \\
$|\lambda|$ & size of a partition $\lambda$ & \ref{SSSPartitions} \\
$\PAR$, $\PAR(n)$ &  set of partitions & \ref{SSSPartitions} \\
$\CPAR_s(n)$ & the set of $s$-class regular partitions of $n$ & \ref{SSSPartitions} \\
$\RPAR_s (n)$ & the set of $s$-regular partitions of $n$ &
\ref{SSSPartitions} \\ 
$\PAR_m (n)$ & the set of $m$-multipartitions of $n$ & \ref{SSSPartitions} \\
$\PAR_p (n,\nu)$ & the set of partitions of $n$ with ``$p'$-part'' $\nu$ & \ref{SSSPartitions} \\
$\POW_p (n)$ & the set of $p$-power partitions of $n$ & 
\ref{SSSPartitions} \\
$\lambda+\mu$ & sum of two partitions & \ref{SSSPartitions} \\
$\Lambda$ & ring of symmetric functions & \ref{SSSsym} \\
$\chi_V$ & character of $\mathfrak S_n$ afforded by module $V$ & 
\ref{SSSsym} \\
$p_\mu$, $p_k$ & power sum symmetric functions & \ref{SSSsym} \\
$C_\mu$ & conjugacy class corresponding to a partition $\mu$ 
& \ref{SSSsym} \\
$z_{\mu}$ & order of centralizer of an element of $C_\mu$ & \ref{SSSsym} \\
$\CATISO$ & isometry between a Grothendieck group and symmetric functions 
& \ref{SSSsym} \\
$\mathfrak S_\lambda$ & parabolic subgroup of $\mathfrak S_n$ 
& \ref{SSSsym} \\ 
$\TRIV_{\mathfrak S_\la}$ & trivial representation of $\mathfrak S_\lambda$
& \ref{SSSsym} \\
$M_n$ & table of permutation characters of $\mathfrak S_n$ & \ref{SSSsym} \\
$h_{\mu}$ & complete symmetric function & \ref{SSSsym} \\
$m_\mu$ & monomial symmetric function & \ref{SSSsym} \\
$\mathcal{M}_{\lambda,\mu}$ & a certain set of size $M_{\lambda,\mu}$ & \ref{SSSsym} \\
$N_n^{(p)}$ & ``$p$-local'' submatrix of $M_n$ & \ref{SSSploc} \\
$L_n^{(p)}$ & a certain block-diagonal matrix & \ref{SSSploc} \\
$a_{\theta}^{(p)} (n)$ & a rational number from Definition~\ref{apn} &
\ref{SSSploc} \\
$\SYM^m$ & symmetric power functor & \ref{SSSmult} \\
$\MULT_m(\ell)$ & the set of weakly increasing $m$-tuples of elements of $\{1,\dots,\ell\}$ & \ref{SSSmult} \\
$\Omega_{\ell,d}$ & a set of tuples, which is in bijection with $\PAR_{\ell} (d)$ & \ref{SSSmult}  \\
$S^d (A)$ & matrix in Definition~\ref{def:Sd} & \ref{SSSmult} \\
$\Lambda_\ell=\bigotimes_{t=1}^\ell \Lambda^{(t)}$ & $\ell$-colored ring of symmetric functions & \ref{SSSmult} \\
$m_{\mu}^{(t)}$, $h_\mu^{(t)}$, $p_{\mu}^{(t)}$ & images of $m_\mu$, $h_\mu$, $p_\mu$ in $\Lambda^{(t)}$ & 
 \ref{SSSmult} \\
$M_{\ell,d}$, $K_{\ell,d}$ & transition matrices in Definition~\ref{MK}
& \ref{SSSmult} \\
$\MP$, $\MPC$ & weight lattice and its dual & \ref{pretsu2} 
\\
$\Pi$, $\Pi^\vee$ & sets of simple roots and corresponding coroots &
\ref{pretsu2} \\
$Q^+$ & positive part of the root lattice & \ref{pretsu2} \\
$\MP^+$ & set of dominant integral weights & \ref{pretsu2} \\
$\Lambda_i$ & a dominant integral weight & \ref{pretsu2} \\
$W=W(X)$ & Weyl group &  \ref{pretsu2} \\
$U_v = U_v (X)$ & quantum group & \ref{pretsu2}  \\
$U_v^+$, $U_v^0$, $U_v^-$ & subalgebras in the triangular decomposition of $U_v$ & \ref{pretsu2} \\
$V(\lambda)$ & highest weight module & \ref{pretsu2} \\
$1_{\lambda}$ & highest weight vector & \ref{pretsu2} \\
$\langle \cdot,\cdot\rangle_{\QSH}$, 
$\langle \cdot,\cdot\rangle_{\RSH}$ & versions of Shapovalov form from Proposition~\ref{tsu38} & \ref{pretsu2} \\
$P(\lambda)$ & the set of weights of $V(\lambda)$ & \ref{pretsu2} \\
$V(\lambda)_\mu$ & $\mu$-weight space of $V(\lambda)$ & 
\ref{pretsu2} \\
$(U_v^-)^\MA$ & an $\MA$-lattice in $U_v^-$ & \ref{pretsu2} \\
$V(\lambda)^{\MA}$, $V(\lambda)_\nu^\MA$ & 
$\MA$-lattices in $V(\lambda)$, $V(\lambda)_\nu$ 
& \ref{pretsu2} \\
$\QSHM_{\lambda,\mu}$, $\RSHM_{\lambda,\mu}$ 
& Gram matrices of Shapovalov forms, see Definition~\ref{tsu313} & 
\ref{pretsu2} \\
$\GCONG$ & an equivalence relation on matrices & \ref{pretsu2} \\
$\widehat{X}$ & extended Cartan matrix of $X$ & \ref{SSADE} \\
$\delta$ & null-root & \ref{SSADE} \\
$\GMOD{A}$ & category of finite-dimensional graded $A$-modules &
\ref{subsec:gradeddef} \\
$M_n$ & graded $n$-component of a module $M$ &  
\ref{subsec:gradeddef}\\
$M\langle k\rangle$ & graded module $M$ with grading shifted down by $k$ & \ref{subsec:gradeddef} \\
$\mathcal S(A)$ & set of representatives of simple graded $A$-modules  & \ref{subsec:gradeddef}
\\
$C_A^v$ & graded Cartan matrix of $A$ & \ref{subsec:gradeddef} \\
$\GPPP{A}$ & category of projective  graded $A$-modules & \ref{subsec:gradeddef} \\
$\BLOCK_\ell (n)$ & the set of pairs $(\rho,d)$ where $\rho$ is an $\ell$-core and $d\in \mathbb N$ & \ref{subsec:gradeddef} \\
$\COKERR{T}$ & cokernel of the map given by a matrix $T$ & 
\ref{varequi} \\
$\equiv'_R$ & unimodular pseudo-equivalence of matrices, see 
Definition~\ref{DVariants} & \ref{varequi} \vspace{1mm} \\
$\equiv^F_R$ & Fitting equivalence of matrices, see 
Definition~\ref{DVariants} & \ref{varequi} \\
$\FITT_d (T)$ & $d$-th Fitting ideal of a matrix $T$ & \ref{varequi} \\
$\Phi_n$, $\Psi_n$ & cyclotomic polynomial and its scaled version 
& \ref{SSpseudo} \\
$\rho_z^{(p)}$ & function from Definition~\ref{Drho} & 
\ref{SSpseudo}\\
$\varphi_{s,n}$ & a bijection from $s$-regular to $s$-class regular partitions & \ref{SSpseudo} \\
$\beta_M$ & auto-bijection of $\PAR$ from Definition~\ref{keybijection}
& \ref{SSpseudo} \\
$g_{k,t}^{(\ell,p)}$, $f_{k,t}^{(\ell)}$, $I_{\ell,p}^v (\lambda)$ & certain products of quantum integers, see Definition~\ref{def:fg} & 
\ref{SSpseudo} \\
$\mathcal F_{k,t,z}^{(\ell,p)}$, 
$\mathcal G_{k,t,z}^{(\ell,p)}$ & certain sets of integers related to 
$f_{k,t}^{(\ell)}$, $g_{k,t}^{(\ell,p)}$ & \ref{SSpseudo}  \vspace{1mm} \\
$\lambda^{<r},\lambda^{\geq r},\OVERy{\lambda}{r}$ 
& $p$-power partitions from Definition~\ref{sixmats} & \ref{SSevs}
\end{longtable}

\end{document}